\numberwithin{equation}{section}
\theoremstyle{plain}
\newtheorem{theorem}{Theorem}[section]
\newtheorem{lemma}[theorem]{Lemma}
\newtheorem{proposition}[theorem]{Proposition}
\newtheorem{corollary}[theorem]{Corollary}
\theoremstyle{definition}
\newtheorem{definition}[theorem]{Definition}
\newtheorem{example}[theorem]{Example}
\newtheorem{remark}[theorem]{Remark}
\newtheorem*{remark*}{Remark}
\newtheorem{question}[theorem]{Question}
\let\c@equation\c@theorem  
\DeclareMathOperator{\gldim}{gldim}
\DeclareMathOperator{\Ext}{Ext}
\DeclareMathOperator{\gr}{gr}
\DeclareMathOperator{\GKdim}{GKdim}
\DeclareMathOperator{\LDA}{{\sf CoLieAlg}}
\DeclareMathOperator{\LA}{{\sf LieAlg}}
\DeclareMathOperator{\HA}{{\sf HopfAlg}}
\newcommand\eps{\epsilon}
\DeclareMathOperator{\h}{H}
\begin{document}

\title[Hopf algebras of GK-dimension 4]
{Connected Hopf algebras of \\Gelfand-Kirillov dimension four}

\author{D.-G. Wang, J.J. Zhang and G. Zhuang}

\address{Wang: School of Mathematical Sciences,
Qufu Normal University, Qufu, Shandong 273165, P.R.China}

\email{dgwang@mail.qfnu.edu.cn, dingguo95@126.com}

\address{Zhang: Department of Mathematics, Box 354350,
University of Washington, Seattle, Washington 98195, USA}

\email{zhang@math.washington.edu}

\address{Zhuang: Department of Mathematics, Box 354350,
University of Washington, Seattle, Washington 98195, USA}

\email{gzhuang@math.washington.edu}

\begin{abstract}
We classify connected Hopf algebras of Gelfand-Kirillov dimension
4 over an algebraic closed field of characteristic zero.
\end{abstract}

\subjclass[2000]{Primary 16A24, 16W30, 57T05}


\keywords{universal enveloping algebra, Hopf algebra, Gelfand-Kirillov
dimension, coassociative Lie algebra}


\maketitle


\setcounter{section}{-1}
\section{Introduction}
\label{zzsec0}

For the introduction and most part of the paper we assume that the
base field $k$ is algebraic closed of characteristic zero. Noncommutative
Hopf algebras of finite Gelfand-Kirillov dimension (GK-dimension, for
short) have been studied in several papers, see for example,
\cite{AA, AS1, AS2, BZ, GZ2, Li, WZZ1, WZZ2, Zh1, Zh2}. The
third-named author proved that, if a Hopf algebra $H$ is a connected,
then the associated graded Hopf algebra $\gr H$ with respect to the
coradical filtration is isomorphic to a commutative Hopf algebra
\cite[Proposition 6.4]{Zh2}. If $H$ has finite GK-dimension, then $\gr H$ is
isomorphic to the polynomial ring $k[x_1,\cdots,x_n]$ \cite[Theorem
6.10]{Zh2}, namely, the regular functions ${\mathcal O}(G)$ on a
unipotent group $G$, or equivalently, the graded dual $U(\mathfrak
L)^*$ of the universal enveloping algebra over a graded Lie algebra
$\mathfrak L$, which is called the {\it lantern} of $H$.  Since the
coradical filtration is naturally associated to the given Hopf
algebra, the lantern $\mathfrak L$, as well as the associated
unipotent group $G$, are invariants of $H$.
This observation motivates the following two
related questions.

\begin{question}
\label{zzque0.1} What are the invariants of $H$ that determine
completely the Hopf algebra structure of $H$?
\end{question}

\begin{question}
\label{zzque0.2} Can we classify all connected Hopf algebras of
finite GK-dimension?
\end{question}

The first question was suggested by Andruskiewitsch and Brown. Also
see the talk given by Brown at the Banff workshop \cite{Br3}. The
second question is a subquestion of several motivating questions for
a couple of ongoing classification projects initiated by many people
such as Andruskiewitsch, Schneider, Brown, Goodearl and their
collaborators. Some studies of general Hopf algebras of GK-dimension
1 and 2 are given in \cite{BZ, Li, GZ2, WZZ2} by using homological tools.
There is little chance to list all isomorphism classes of connected
Hopf algebras. A more practical question is along the line of Question
\ref{zzque0.1}: can we classify connected Hopf algebras in terms of
their invariants?

The connected hypothesis is quite restrictive, but there are some
interesting new Hopf algebras in this class even when the GK-dimension
is 3, see the classification of connected Hopf algebras of GK-dimension
3 in \cite[Theorem 1.3]{Zh2}. Since $H$ is a deformation of $\gr H$, it
is possible to understand all $H$ in Question \ref{zzque0.2} if the
(Hopf)-cohomologies of $\gr H$ can be worked out completely. But we will
not consider this in the present paper.

The first goal of the paper is to provide some invariants of
connected Hopf algebras that help us to understand partially the
structure of the Hopf algebra. One of such is the coassociative Lie
algebra that was introduced in \cite{WZZ3}. Our second goal is to
study and classify all connected Hopf algebras of GK-dimension 4,
which should give us a better sense of how connected Hopf algebras
of higher GK-dimension look like. Here is the main result.

\begin{theorem}
\label{zzthm0.3} Let $H$ be a connected Hopf algebra of GK-dimension
4. Then $H$ is isomorphic to one of following.
\begin{enumerate}
\item[(a)]
Enveloping algebra $U({\mathfrak g})$ over a Lie algebra ${\mathfrak
g}$ of dimension $4$.
\item[(b)]
Enveloping algebra $U(L)$ over an anti-cocommutative coassociative
Lie algebra $L$ of dimension $4$.
\item[(c)]
Primitively-thin Hopf algebras of GK-dimension 4.
\end{enumerate}
\end{theorem}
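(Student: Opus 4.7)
The plan is to stratify by the dimension of the space of primitives $P := P(H)$ and to combine the associated-graded reduction recalled in the introduction with the coassociative Lie algebra structure developed in \cite{WZZ3}. Since $P$ injects into the degree-one component of $\gr H \cong k[x_1,x_2,x_3,x_4]$, and each $x_i$ sits in positive degree, we have $1 \le \dim P \le 4$. The three conclusions of the theorem are expected to correspond to $\dim P = 4$, to $\dim P = 3$, and to $\dim P \le 2$ respectively.

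When $\dim P = 4$, the lantern $\mathfrak{L}$ is concentrated in degree one, so $\gr H$ is primitively generated. An induction along the coradical filtration then shows that $H$ itself is generated by $P$ as an algebra, and comparing GK-dimensions (both equal $4$) forces the natural surjection $U(P) \twoheadrightarrow H$ to be an isomorphism. This produces case (a).

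When $\dim P = 3$, the first filtered piece $H_1$ carries the $4$-dimensional coassociative Lie algebra $L$ of \cite{WZZ3}, in which $P$ sits as the sub-Lie algebra of genuine primitives. The universal construction $U(L)$ from \cite{WZZ3} maps naturally to $H$; a PBW-type statement for $U(L)$ together with equality of GK-dimensions shows this map is an isomorphism. One then eliminates the cocommutative $L$ (which would be an ordinary Lie algebra, already covered when $\dim P = 4$), leaving only the anti-cocommutative possibility and giving case (b).

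The remaining case $\dim P \le 2$ is by definition the primitively-thin regime of (c), and is the heart of the argument. Here one lifts a homogeneous basis of $\gr H$ to $H$ one filtration step at a time, encoding the ambiguity as data in the (Hopf) Hochschild cohomology of $\gr H \cong U(\mathfrak{L})^*$; the classification up to isomorphism then reduces to listing the relevant low-degree cohomology classes modulo the action of graded automorphisms of the lantern $\mathfrak{L}$. The main obstacle will be carrying out this cohomological classification explicitly: the calculation must be done separately for each isomorphism type of $4$-dimensional graded Lie algebra $\mathfrak{L}$ with $\dim \mathfrak{L}_1 \le 2$, and one must verify that every cocycle produced deforms to a genuine Hopf algebra of GK-dimension $4$ and that distinct normal forms yield non-isomorphic $H$.
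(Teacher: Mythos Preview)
Your stratification by $\dim P(H)$ matches the paper's strategy, and your handling of the cases $\dim P = 4$ and $\dim P = 2$ is essentially on target. Two points to tighten: first, you allow $\dim P = 1$, but Lemma~\ref{zzlem1.3}(e) rules this out whenever $\GKdim H \ge 2$, so the primitively-thin case is exactly $\dim P = 2$, not $\dim P \le 2$; second, in that case there is only \emph{one} possible lantern (Lemma~\ref{zzlem1.4}(c)), so no case split over $\mathfrak{L}$ is needed. The paper organizes the primitively-thin analysis a bit differently from what you sketch: rather than computing Hopf cohomology of $\gr H$, it first locates a GK-dimension~$3$ Hopf subalgebra $C$, computes $H^2(\Omega C)$ explicitly (two-dimensional, Proposition~\ref{zzpro4.10}), uses this to pin down the coproduct of a fourth generator $w$, and then runs a case analysis on the isomorphism type of $C$.

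The genuine gap is in your $\dim P = 3$ paragraph. The $4$-dimensional CLA $L$ does \emph{not} live in $H_1$: the coradical piece $H_1 = k1 \oplus P(H)$ sees only the three primitives and carries no nontrivial $\delta$. The fourth basis vector of $L$ must come from $H_2 \setminus H_1$, and producing it is the substantive step. The paper takes $L = P_2(H)$, the space of elements whose $\delta_H$ is skew-symmetric and lands in $P(H)^{\otimes 2}$; one then needs (i) a cohomological argument (Lemma~\ref{zzlem2.6}(c)) to show $P_2(H) \supsetneq P(H)$ whenever $H \ne U(P(H))$, (ii) the bound $p_2(H) \le \GKdim H$ to force $\dim P_2(H) = 4$, and (iii) Lemma~\ref{zzlem2.5}(e) to check $P_2(H)$ is closed under the bracket. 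The anti-cocommutativity of $L$ is then automatic from the definition of $P_2(H)$, not obtained by eliminating a cocommutative alternative as you suggest. This is the content of Theorem~\ref{zzthm2.7}, and without it your $\dim P = 3$ case does not go through.
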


Here are some details about Theorem \ref{zzthm0.3}.

\begin{remark}
\label{zzrem0.4} Let $H$ be as in Theorem \ref{zzthm0.3} and parts
(a,b,c) here match up with that of Theorem \ref{zzthm0.3}.
\begin{enumerate}
\item
All 4-dimensional Lie algebras over the complex numbers ${\mathbb
C}$ are listed in the book \cite[Theorem 1.1(iv), page 209]{OV}. In
this case, the lantern ${\mathfrak L}(H)$ is the unique graded Lie
algebra of dimension 4 generated by four elements in degree 1,
namely, the abelian Lie algebra of dimension 4.
\item
Anti-cocommutative coassociative Lie algebras of dimension $4$ are
classified in Theorem \ref{zzthm3.5}. Therefore Hopf algebras in
Theorem \ref{zzthm0.3}(b) are completely described. In this case,
${\mathfrak L}(H)$ is, up to isomorphism, the unique graded Lie
algebra of dimension 4 generated by three elements in degree 1,
namely, the Lie algebra ${\mathfrak h}_3\oplus k$ where ${\mathfrak
h}_3$ is the 3-dimensional Heisenberg Lie algebra.
\item
There are exactly four families of primitively-thin Hopf algebras of
GK-dimension 4, each of which is constructed explicitly in Section
\ref{zzsec4}, see Theorem \ref{zzthm4.23}. In this case, ${\mathfrak
L}(H)$ is, up to isomorphism, the unique graded Lie algebra of
dimension 4 generated by two elements in degree 1.
\end{enumerate}
\end{remark}

Some ideas can be extended to connected Hopf algebras of
GK-dimension 5. For example, the classification of the lantern of
$H$ is given in Remark \ref{zzrem2.10}. In higher GK-dimension, we
have the following result. Let $p(H)$ denote the dimension of the
space of all primitive elements. Let $P_2(H)$ be the space generated
by all anti-cocommutative elements of $H$.

\begin{theorem}
\label{zzthm0.5} Suppose $H$ is a connected Hopf algebra. If
$p(H)=\GKdim H-1<\infty$, then $H$ is isomorphic to the enveloping
algebra over an anti-cocommutative coassociative Lie algebra
$P_2(H)$. In this case $H$ is completely determined by $P_2(H)$.
\end{theorem}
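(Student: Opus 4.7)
The plan is to transfer the lantern structure of $\gr H$ back to $H$, identifying $H$ as the enveloping algebra of the $n$-dimensional space $P_2(H)$.

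First, I would invoke \cite[Theorem 6.10]{Zh2}: $\gr H \cong k[x_1,\dots,x_n]$ as Hopf algebras with $n=\GKdim H$, so the lantern $\mathfrak{L}=\mathfrak{L}(H)$ is a graded nilpotent Lie algebra of total dimension $n$. Since $(\gr H)_1 = H_1/H_0 = P(H)$, we get $\dim\mathfrak{L}_1 = p(H)= n-1$, so only one extra dimension remains to be placed in higher degrees. Standard results on connected graded commutative Hopf algebras generated in low degrees force $\dim\mathfrak{L}_2 = 1$, $\mathfrak{L}_{\geq 3}=0$, and the bracket $\mathfrak{L}_1\wedge\mathfrak{L}_1\to\mathfrak{L}_2$ to be a nonzero skew form; in particular $\mathfrak{L}$ is $2$-step nilpotent.

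Next, I would lift the degree-$2$ generator to $H$. Choose a basis $x_1,\dots,x_{n-1}$ of $P(H)$ and pick $z\in H_2$ whose image in $(\gr H)_2$ represents the extra lantern generator. Then $\Delta(z)-z\otimes 1-1\otimes z\in H_1\otimes H_1$, and after subtracting a suitable element of $P(H)\cdot P(H)\subset H$ we may arrange that the symmetric part $(1+\tau)(\Delta(z)-z\otimes 1-1\otimes z)$ vanishes, i.e.\ $z$ is anti-cocommutative in the sense of \cite{WZZ3}. Then $P_2(H)=P(H)\oplus kz$ has dimension $n$, and the commutator bracket together with the antisymmetric part of $\Delta$ endow $P_2(H)$ with the structure of an anti-cocommutative coassociative Lie algebra.

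Finally, I would apply the universal property of the enveloping functor $U$ for coassociative Lie algebras (from \cite{WZZ3}) to the inclusion $P_2(H)\hookrightarrow H$, producing a Hopf algebra homomorphism $\phi:U(P_2(H))\to H$. Surjectivity is immediate from the fact that $P_2(H)$ generates $H$ as an algebra, proved by induction on the coradical filtration. Injectivity follows by comparing associated gradeds: $\gr U(P_2(H))$ and $\gr H$ have the same lantern $\mathfrak{L}$, so $\gr\phi$ is an isomorphism, hence so is $\phi$. Functoriality of $U$ yields the final uniqueness clause.

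The main obstacle, in my view, is the normalization step: arranging that $z$ is genuinely anti-cocommutative (not merely modulo products of primitives) and then verifying on the nose that the resulting bracket and cobracket on $P_2(H)$ satisfy the full list of axioms of an anti-cocommutative coassociative Lie algebra. This requires controlling Hopf-cohomological obstructions coming from $\gr H\cong k[x_1,\dots,x_n]$; once these are resolved, the enveloping-algebra machinery of \cite{WZZ3} closes the argument with little further effort.
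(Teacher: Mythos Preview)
Your outline is correct, but the paper takes a more intrinsic route (Theorem~\ref{zzthm2.7}) that avoids the lantern almost entirely. Rather than lift a degree-$2$ generator from $\gr H$, the paper uses Lemma~\ref{zzlem2.6}(c) to show $P_2(H)\supsetneq P(H)$ whenever $H\neq U(P(H))$, and then pins down $\dim P_2(H)/P(H)=1$ via the squeeze $p(H)+1\le p_2(H)\le \GKdim H\le p(H)+1$ (the middle inequality being Lemma~\ref{zzlem2.6}(a)). Bracket-closure of $P_2(H)$ is then Lemma~\ref{zzlem2.5}(e), and the CLA compatibility \eqref{E2.1.1} is a two-line check because all but one basis vector of $P_2(H)$ lie in $\ker\delta$. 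Injectivity of $U(P_2(H))\to H$ is read off from \cite[Theorem~5.3.1]{Mo} (injectivity on primitives suffices for a map out of a connected coalgebra), and surjectivity from a GK-dimension count via \cite[Lemma~7.4]{Zh2}, with no induction on the coradical filtration needed. Your graded/lantern approach reaches the same conclusions and is a legitimate alternative; the paper's version trades the explicit lifting for the general Lemmas~\ref{zzlem2.5} and~\ref{zzlem2.6}, which are reusable in higher GK-dimension. One point worth correcting: the step you flag as ``the main obstacle'' is in fact the easiest---no Hopf-cohomological obstructions arise in verifying the CLA axioms once $\dim P_2(H)/P(H)=1$ is known. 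The only genuinely cohomological ingredient is the existence of an anti-cocommutative element outside $P(H)$ (your normalization step, or equivalently Lemma~\ref{zzlem2.6}(c)), and that is already behind you by the time you reach the axiom check.
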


\subsection*{Acknowledgments}
The authors thank Nicol{\'a}s Andruskiewitsch and Ken Brown for
their valuable suggestions and several conversations during the
Banff workshop in October 2012. D.-G. Wang was supported by the
National Natural Science Foundation of China (No. 10671016 and
11171183) and the Shandong Provincial Natural Science Foundation
of China (No. ZR2011AM013). J.J. Zhang and G. Zhuang were supported
by the US National Science Foundation (NSF grant No. DMS 0855743).

\section{Preliminaries}
\label{zzsec1}

Throughout let $k$ denote a base field. All vector spaces, algebras,
coalgebras are over $k$. For any coalgebra $C$, we use $\Delta$ and
$\epsilon$ for comultiplication and counit, respectively. We denote
the kernel of the counit by $C^+$. The {\it coradical}  $C_0$ of $C$
is defined to be the sum of all simple subcoalgebras of $C$. The
coalgebra $C$ is called {\it pointed} if every simple subcoalgebra
is one-dimensional, and is called {\it connected} if $C_0$ is
one-dimensional. Also, we use $\{C_n\}_{n=0}^{\infty}$ to denote the
coradical filtration of $C$ \cite[5.2.1]{Mo}.

For a pointed Hopf algebra $H$, the coradical filtration
$\{H_n\}_{n=0}^{\infty}$ is a Hopf algebra filtration \cite[p. 62]{Mo}.
As a consequence, the associated graded algebra is also a Hopf algebra,
which we denote by $\gr H$. Also, we use $\gr H(n)$ to denote the $n$-th 
homogeneous component of $\gr H$ (i.e. $\gr H(n)=H_n/H_{n-1}$).

Let $\LA$ be the category of Lie algebras and let $\HA_{cc}$ be the
category of connected cocommutative Hopf algebras. Then the first
assertion of the following is a consequence of
Milnor-Moore-Cartier-Kostant Theoerem \cite[Theorem 5.6.5]{Mo}. The
second assertion is a well-known fact in ring theory.

\begin{proposition}
\label{zzpro1.1} Let ${\text{char}}\; k=0$. Then the assignment
${\mathfrak g}\to U(\mathfrak g)$ defines an equivalence between
categories $\LA$ and $\HA_{cc}$. If $\dim \mathfrak g<\infty$, then
$$\GKdim U(\mathfrak g)=\gldim U(\mathfrak g)=\dim \mathfrak g.$$
\end{proposition}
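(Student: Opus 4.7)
The plan is to treat the two assertions separately, since each reduces to a classical result cited in the setup.

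For the categorical equivalence, I would first check that $U$ actually lands in $\HA_{cc}$: cocommutativity is immediate because every generator of $U(\mathfrak g)$ is primitive and $\Delta$ is an algebra map, while connectedness follows from the PBW theorem via the length filtration on PBW monomials, whose associated graded is a polynomial ring and hence has one-dimensional coradical. To construct a quasi-inverse, I would send $H \in \HA_{cc}$ to its space of primitive elements $P(H)$ equipped with the commutator bracket; functoriality on both sides is routine. The substantive content is that the unit $\mathfrak g \to P(U(\mathfrak g))$ and the counit $U(P(H)) \to H$ are both isomorphisms. The unit is an isomorphism by the PBW theorem, and this is where characteristic zero is essential (in characteristic $p$, elements such as $x^p$ become primitive and spoil the bijection). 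The counit is the Milnor--Moore--Cartier--Kostant theorem invoked in the paper.

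For the dimension equalities, the PBW theorem equips $U(\mathfrak g)$ with a standard ascending filtration whose associated graded is the polynomial ring $k[x_1,\dots,x_n]$ with $n=\dim \mathfrak g$. Since GK-dimension is preserved under passage to the associated graded of such a filtration, $\GKdim U(\mathfrak g) = \GKdim k[x_1,\dots,x_n] = n$. For global dimension, I would write down the Chevalley--Eilenberg resolution $U(\mathfrak g)\otimes_k \Lambda^\bullet \mathfrak g \twoheadrightarrow k$ to obtain $\pdim_{U(\mathfrak g)} k \leq n$, and then observe that $\Ext^n_{U(\mathfrak g)}(k,k)$ is one-dimensional (computing the top Lie algebra cohomology with trivial coefficients), so the bound is sharp. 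Since $U(\mathfrak g)$ is a Noetherian augmented $k$-algebra whose augmentation module has a finite projective resolution, a standard change-of-rings/flat-base-change argument gives $\gldim U(\mathfrak g) = \pdim_{U(\mathfrak g)} k = n$.

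The only genuinely delicate point I anticipate is the identification $P(U(\mathfrak g)) = \mathfrak g$, which fails outside characteristic zero and is the place where the hypothesis $\textup{char}\, k = 0$ really does work; the rest of the argument is bookkeeping once PBW, Milnor--Moore--Cartier--Kostant, and Chevalley--Eilenberg are in hand.
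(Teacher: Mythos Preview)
Your proposal is correct and is precisely in the spirit of what the paper intends: the paper does not actually give a proof of this proposition, but merely states that the first assertion is a consequence of the Milnor--Moore--Cartier--Kostant theorem \cite[Theorem 5.6.5]{Mo} and that the second is ``a well-known fact in ring theory.'' Your sketch supplies exactly the details one would fill in behind those citations---PBW for connectedness and for identifying $P(U(\mathfrak g))$ with $\mathfrak g$, Milnor--Moore--Cartier--Kostant for the counit, the PBW filtration for $\GKdim$, and Chevalley--Eilenberg for $\gldim$---so there is no divergence in approach, only in level of detail.
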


In some sense, noncocommutative connected Hopf algebras are a
generalization of the universal enveloping algebra over a Lie algebra.

Let $H$ be a connected Hopf algebra. By \cite[Proposition 6.4 and
Theorem 6.6]{Zh2}, $\gr H$ is a commutative domain. Suppose $\gr H$
is locally finite. Then the graded dual $(\gr H)^*$ is a Hopf algebra.

\begin{definition}
\label{zzdef1.2} Let $H$ be a connected Hopf algebra.
\begin{enumerate}
\item
Let $P(H)$ be the space of primitive elements in $H$ and let $p(H)$
be the dimension of $P(H)$.
\item
We say $H$ is {\it locally finite} if $p(H)<\infty$, or
equivalently, $H_i$ in the coradical filtration of $H$ is finite
dimensional for all $i$.
\item
$H$ is called {\it primitively-thin}, if $p(H)=2$.
\item
Suppose $H$ is locally finite. The {\it lantern} of $H$ is the
graded Lie algebra ${\mathfrak L}(H)$ such that $U(\mathfrak
L(H))\cong (\gr H)^*$. In other words, ${\mathfrak L}(H)=P((\gr
H)^*)$.
\end{enumerate}
\end{definition}

The following lemma is easy. Part (a) of the following lemma says
that ${\mathfrak L}(H)$ is a kind of the abelianization of $H$. Parts
(e,f) justify calling $H$ primitively-thin when $p(H)=2$.

\begin{lemma}
\label{zzlem1.3} Let $H$ be a locally finite connected Hopf algebra.
\begin{enumerate}
\item
If $H$ is the (universal) enveloping algebra $U({\mathfrak g})$ for
a finite dimensional Lie algebra ${\mathfrak g}$, then ${\mathfrak
L}(H)$ is the abelian Lie algebra of dimension equal to $\dim
{\mathfrak g}$.
\item
The ${\mathfrak L}(H)$ is a positively graded Lie algebra generated
in degree 1 and $\dim {\mathfrak L}(H)=\GKdim H$.
\item
${\mathfrak L}(H)_1=(\gr H)^*_1 =(H_1/k)^*=P(H)^*$.
\item
$p(H)=1$ if and only if $H=k[x]$.
\item
\cite[Lemma 5.11]{Zh1}
If $\GKdim H\geq 2$, then $p(H)\geq 2$.
\item
$H$ is primitively-thin if and only if ${\mathfrak L}(H)$ is a
graded Lie algebra generated by two elements in degree 1.
\end{enumerate}
\end{lemma}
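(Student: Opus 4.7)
The plan is to exploit the duality $(\gr H)^* = U(\mathfrak{L}(H))$ together with the structural input, recalled from \cite[Theorem 6.10]{Zh2}, that $\gr H$ is a polynomial ring whose algebra generators can be taken in coradical degree $1$. The logical hub is part (c); once it is established, parts (b), (f) cascade, (a) is a direct PBW computation, (d) combines (b) and (c) with a Hilbert-series match, and (e) is cited.

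For (c), unfold the definitions: $(\gr H)^*_n \cong (H_n/H_{n-1})^*$, and setting $n=1$ with $H_0 = k$ yields $(\gr H)^*_1 = (H_1/k)^*$. Since $H$ is connected, $H_1 = k \oplus P(H)$, so $(H_1/k)^* \cong P(H)^*$. Finally, any element of the degree-$1$ component of a connected graded Hopf algebra is automatically primitive for degree reasons, so $\mathfrak{L}(H)_1 = P((\gr H)^*)_1 = (\gr H)^*_1$. For (b), $\gr H$ is commutative connected by \cite[Proposition 6.4]{Zh2}, and locally finite because $p(H) < \infty$; hence $(\gr H)^*$ is a well-defined connected cocommutative graded Hopf algebra, to which Milnor--Moore (Proposition \ref{zzpro1.1}) applies, giving $(\gr H)^* \cong U(\mathfrak{L}(H))$ with $\mathfrak{L}(H)$ graded, together with the dimension formula $\dim \mathfrak{L}(H) = \GKdim(\gr H)^* = \GKdim \gr H = \GKdim H$. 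Generation in degree $1$ is dual to $\gr H$ being generated as an algebra by $\gr H(1) = H_1/k$, which follows from \cite[Theorem 6.10]{Zh2} since the polynomial generators of $\gr H$ may be chosen inside $\gr H(1)$.

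For (a), PBW identifies the coradical filtration on $U(\mathfrak{g})$ with the standard filtration, so $\gr U(\mathfrak{g}) \cong S(\mathfrak{g})$ as graded Hopf algebras with all generators in degree $1$; the graded dual of such a polynomial Hopf algebra is the enveloping algebra on an abelian Lie algebra of the dual dimension, whence $\mathfrak{L}(H)$ is abelian of dimension $\dim \mathfrak{g}$. For (f), combine (b) and (c): $\mathfrak{L}(H)$ is generated in degree $1$ by $\mathfrak{L}(H)_1$, a vector space of dimension $p(H)$, so $p(H) = 2$ iff $\mathfrak{L}(H)$ has exactly two generators in degree $1$. For (d), if $p(H) = 1$ then $\mathfrak{L}(H)_1$ is one-dimensional, and (b) forces $\mathfrak{L}(H)$ to be the one-dimensional abelian Lie algebra; hence $\gr H \cong k[t]$ with $\GKdim H = 1$. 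Fixing $0 \neq x \in P(H)$, the inclusion $k[x] \hookrightarrow H$ is a Hopf subalgebra inclusion whose image has $\gr k[x] = \gr H$ by matching Hilbert series, forcing $k[x] = H$; the converse is immediate.

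The one non-mechanical step is the degree-$1$ generation in (b): it presupposes that the polynomial generators of $\gr H$ supplied by \cite[Theorem 6.10]{Zh2} can be placed in coradical degree $1$, which in turn rests on the fact that a connected Hopf algebra of finite GK-dimension in characteristic zero is generated as an algebra by its first coradical piece. Once this structural ingredient is in hand, every remaining assertion in the lemma reduces to a direct dualization or a straightforward dimension count.
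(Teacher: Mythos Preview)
Your argument for part (b) contains a genuine error in the step establishing that $\mathfrak{L}(H)$ is generated in degree $1$. You claim that ``generation in degree $1$ is dual to $\gr H$ being generated as an algebra by $\gr H(1)$,'' and then assert that the polynomial generators of $\gr H$ from \cite[Theorem 6.10]{Zh2} may all be taken in coradical degree $1$. Both claims are wrong. First, the duality goes the other way: $(\gr H)^*$ being generated in degree $1$ as an \emph{algebra} is dual to $\gr H$ being coradically graded as a \emph{coalgebra}, not to $\gr H$ being generated in degree $1$ as an algebra. (Algebra generation of $\gr H$ in degree $1$ would instead dualize to $(\gr H)^*$ being coradically graded.) Second, $\gr H$ is typically \emph{not} generated by $\gr H(1)$: already for any primitively-thin $H$ of GK-dimension $\ge 3$ the polynomial generators sit in degrees $1,2,3,\dots$ (see e.g.\ Lemma \ref{zzlem4.14} or the degree arrangement in the proof of Lemma \ref{zzlem2.6}(a)). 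Your final paragraph explicitly invokes the ``fact'' that a connected Hopf algebra of finite GK-dimension is generated by its first coradical piece; this is false, and the whole argument for (b) collapses.

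The correct route, which the paper takes, is simply that $\gr H$ is coradically graded \emph{by construction} (the associated graded of any coalgebra with respect to its coradical filtration is coradically graded), and this is exactly what dualizes to $(\gr H)^*$ being generated in degree $1$. No appeal to \cite[Theorem 6.10]{Zh2} is needed for this step. Once (b) is fixed in this way, the rest of your proof (parts (a), (c), (d), (f)) goes through essentially as you wrote it and matches the paper's approach.
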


\begin{proof} (a) In this case $\gr H=U(A)$ where $A$ is an
abelian Lie algebra with $\dim A=\dim {\mathfrak g}$.
So $\gr H$ is a commutative and cocommutative Hopf
algebra and $(\gr H)^*\cong \gr H$ as Hopf algebras.
Consequently, $(\gr H)^*\cong U(A)$. Thus ${\mathfrak L}(H)\cong A$.

(b) Since $\gr H$ is coradically graded, $(\gr H)^*$ is generated in
degree 1 as an algebra. Since $(\gr H)^*$ is cocommutative
\cite[Proposition 6.4]{Zh2} and ${\rm{char}}\; k=0$,
$(\gr H)^*\cong U({\mathfrak L})$ and ${\mathfrak L}$ is a graded Lie
algebra generated in degree 1. Finally,
$$\GKdim H=\GKdim \gr H=\GKdim (\gr H)^*=\dim {\mathfrak L}(H).$$

(c) It follows from definition and part (b).

(d) If $p(H)=1$, then ${\mathfrak L}(H)_1$ is 1-dimensional. By part
(b), ${\mathfrak L}(H)$ is generated by ${\mathfrak L}(H)_1$. Thus
${\mathfrak L}(H)={\mathfrak L}(H)_1$, which is 1-dimensional.

(e,f) These follow from part (b) and definition.
\end{proof}

Given any graded Lie algebra finitely generated in degree 1, say
$\mathfrak g$, $U({\mathfrak g})$ is a locally finite graded Hopf
algebra. Since $U({\mathfrak g})$ is isomorphic to $(U({\mathfrak
g})^*)^*$, the lantern of the Hopf algebra $H=(U({\mathfrak g}))^*$
is $\mathfrak g$. Therefore every graded Lie algebra finitely
generated in degree 1 appears as the lantern of some connected Hopf
algebras.

\begin{lemma}
\label{zzlem1.4} Let $H$ be a connected Hopf algebra of GK-dimension
4 and let ${\mathfrak L}$ be the lantern of $H$. Then ${\mathfrak
L}$ is isomorphic to one of the following:
\begin{enumerate}
\item
the abelian Lie algebra of dimension 4 concentrated in degree 1.
\item
the graded Lie algebra of dimension 4 with a basis $\{a,b,c,
[a,b]\}$ where $a,b,c,$ are in degree 1 and $[a,b]$ is in degree 2,
subject to the relations $[c,{\mathfrak L}]=0=[[a,b],{\mathfrak
L}]$. This Lie algebra is isomorphic to the Lie algebra ${\mathfrak
h}_3\oplus k$ where ${\mathfrak h}_3$ is the 3-dimensional
Heisenberg Lie algebra.
\item
the graded Lie algebra of dimension 4 with a basis $\{a, b, [a,b],
[[a,b],b]\}$ where $a,b$ are in degree 1, $[a,b]$ is in degree 2 and
$[[a,b],b]$ is in degree 3, and subject to the relations
$[[a,b],a]=0= [[[a,b],b], {\mathfrak L}]=0$.
\end{enumerate}
\end{lemma}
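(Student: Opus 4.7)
By Lemma~\ref{zzlem1.3}(b,c,e), the lantern $\mathfrak{L}:=\mathfrak{L}(H)$ is a positively graded Lie algebra, generated in degree~1, with $\dim\mathfrak{L}=\GKdim H=4$ and $d:=\dim\mathfrak{L}_1=p(H)\in\{2,3,4\}$. My plan is to split into the three cases $d=4,3,2$ and show in each case that $\mathfrak{L}$ is (up to isomorphism) the Lie algebra listed in the corresponding item. In every case the key structural constraint is that, because $\mathfrak{L}$ is generated in degree~1, each piece $\mathfrak{L}_n$ for $n\geq 2$ is spanned by iterated brackets of elements of $\mathfrak{L}_1$, and the total dimension equals $4$.

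The case $d=4$ is immediate: $\mathfrak{L}=\mathfrak{L}_1$ forces $\mathfrak{L}$ to be the abelian Lie algebra of dimension~4, giving~(a). For $d=3$ we have $\dim\mathfrak{L}_{\geq 2}=1$, forcing $\dim\mathfrak{L}_2=1$ and $\mathfrak{L}_n=0$ for all $n\geq 3$. The commutator is then a nonzero alternating bilinear form $\mathfrak{L}_1\times\mathfrak{L}_1\to\mathfrak{L}_2\cong k$ on an odd-dimensional space, hence has a nontrivial radical; pick a nonzero $c$ in it. On any 2-dimensional complement $W\subset\mathfrak{L}_1$ the induced form must be nondegenerate (otherwise the whole form would vanish), so I can choose $a,b\in W$ with $[a,b]$ spanning $\mathfrak{L}_2$. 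Since $\mathfrak{L}_3=0$, both $c$ and $[a,b]$ are central, producing exactly the presentation in~(b), and the splitting $\{a,b,[a,b]\}\sqcup\{c\}$ identifies $\mathfrak{L}$ with $\mathfrak{h}_3\oplus k$.

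For $d=2$ with basis $u,v$ of $\mathfrak{L}_1$ one has $\mathfrak{L}_2=k[u,v]$; this is 1-dimensional (otherwise $\mathfrak{L}$ would be abelian of dimension~2), and then $\dim\mathfrak{L}_{\geq 3}=1$ together with generation in degree~1 forces $\dim\mathfrak{L}_3=1$ and $\mathfrak{L}_n=0$ for $n\geq 4$. The linear map $f\colon\mathfrak{L}_1\to\mathfrak{L}_3$, $x\mapsto[x,[u,v]]$, is therefore surjective with $1$-dimensional kernel; pick $0\neq a\in\ker f$ and extend to a basis $\{a,b\}$ of $\mathfrak{L}_1$. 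After rescaling we have $[a,[a,b]]=0$, while $[[a,b],b]$ is nonzero (else $f=0$) and spans $\mathfrak{L}_3$. The vanishing of $\mathfrak{L}_4$ forces $[[[a,b],b],\mathfrak{L}]=0$, which together with $[[a,b],a]=0$ gives exactly the relations in~(c); Jacobi is then automatic because every triple bracket lands in $\mathfrak{L}_{\geq 4}=0$ or reduces via the two relations, so the presentation determines $\mathfrak{L}$ uniquely up to isomorphism.

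The most delicate step will be the $d=2$ case, specifically the basis change in $\mathfrak{L}_1$ that produces the asymmetric normal-form relation $[[a,b],a]=0$; the $d=3$ case rests on the standard fact that an alternating bilinear form on an odd-dimensional space is degenerate, which is the only nontrivial linear-algebraic input. Once the normal forms are in place, all remaining checks (Jacobi, realizability, and uniqueness up to isomorphism of the resulting graded Lie algebra) are routine consequences of the dimension bookkeeping recorded above.
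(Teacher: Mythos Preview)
Your proposal is correct and follows essentially the same approach as the paper: both split according to $\dim\mathfrak{L}_1\in\{2,3,4\}$, use the dimension count to pin down the graded pieces, and then normalize a basis of $\mathfrak{L}_1$ so that the required relations hold. Your treatment is slightly more explicit---phrasing the $d=3$ step via the radical of an alternating form, and the $d=2$ normalization via the kernel of $x\mapsto[x,[u,v]]$---but these are exactly what the paper means by ``by linear algebra'' and ``$[[a,b],a]$ and $[[a,b],b]$ are linearly dependent, so up to a base change $[[a,b],a]=0$.''
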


\begin{proof} By Lemma \ref{zzlem1.3}(b), ${\mathfrak L}=
\oplus_{i\geq 1} {\mathfrak L}_i$ is a graded Lie algebra generated
in degree 1 of dimension 4. By Lemma \ref{zzlem1.3}(c,e), the degree
1 component ${\mathfrak L}_1$ has dimension either 2 or 3 or 4.

If $\dim {\mathfrak L}_1=4$, then ${\mathfrak L}={\mathfrak L}_1$
which must be abelian. This is case (a).

If $\dim {\mathfrak L}_1=3$, pick a basis, say $\{a,b,c\}$ of
${\mathfrak L}_1$. Then $\dim {\mathfrak L}_2=1$ and ${\mathfrak
L}_i=0$ for all $i>2$. Let $z$ be a basis of ${\mathfrak L}_2$. By
linear algebra, up to a basis change, $z=[a,b]$ and $c$ is in the
center of ${\mathfrak L}$. This is case (b).

If $\dim {\mathfrak L}_1=2$, pick a basis, say $\{a,b\}$ of
${\mathfrak L}_1$. Then $\dim {\mathfrak L}_2=1$ with a basis
$[a,b]$, $\dim {\mathfrak L}_3=1$ and $\dim {\mathfrak L}_i=0$ for all $i>3$.
Then $[[a,b],a]$ and $[[a,b],b]$ are linearly dependent. Up to a
base change we may assume that $[[a,b],a]=0$. Thus $[[a,b],b]$ is
the fourth basis element of ${\mathfrak L}$. This is case (c).
\end{proof}

The three different cases in Lemma \ref{zzlem1.4} will be a
guideline for our classification.

Later in this paper we will use the cohomology of coalgebras as a
tool, which we briefly recall here. Let $k\subset C$ be a connected
coalgebra. Then $C^+$, the kernel of the counit, becomes a coalgebra
without counit by setting
$$\delta(x)=\Delta(x)-(1\otimes x+x\otimes1)$$
Let $\Omega C$ be the tensor algebra $TC^+$ with a differential
determined by
$$\partial(x)=\delta(x)$$
for all $x\in C^+$ with given degree one. Extend $\partial$ to the
algebra $\Omega(C)$ as a derivation. Since $\delta$ is
coassociative, $\partial$ is a differential (i.e., $\partial^2=0$)
and $(\Omega C,\partial)$ is a dga. We call $(\Omega C,\partial)$
the cobar construction of $C$. The $i$th coalgebra cohomology of $C$
is defined to be the $i$th cohomology of $(\Omega C,\partial)$, namely
$H^i(\Omega C)$, for any integer $i$.

\section{Coassociative Lie algebras and their enveloping algebras}
\label{zzsec2}

We first recall the definition of coassociative Lie algebras.

\begin{definition}\cite[Definition 1.1]{WZZ3}
\label{zzdef2.1}
A Lie algebra $(L, [\;,\;])$ together with a coproduct
$\delta: L\to L\otimes L$ is called a {\it coassociative Lie algebra}
(or {\it CLA}, for short) if
\begin{enumerate}
\item
$(L,\delta)$ is a coalgebra, namely, $\delta$ is coassociative without
counit,
\item
$\delta$ and $[\;,\;]$ satisfies the following condition in the usual
  enveloping algebra $U(L)$ of the Lie algebra $L$,
\begin{equation}
\label{E2.1.1}\tag{E2.1.1}
\delta([a,b])= b_1\otimes [a, b_2]+[a,b_1]\otimes b_2+
[a_1,b]\otimes a_2+a_1\otimes [a_2,b]+[\delta(a), \delta(b)]
\end{equation}
for all $a,b\in L$. Here $\delta(x)=x_1\otimes x_2$ following Sweedler
with $\Sigma$ omitted.
\end{enumerate}
\end{definition}

Let $\LDA$ be the category of coassociative Lie algebras (CLAs). Some
basic properties of CLAs can be found in
\cite{WZZ3}. The enveloping algebra of a CLA
is defined as follows.

\begin{definition}\cite[Definition 1.8]{WZZ3}
\label{zzdef2.2} Let $L$ be a CLA. The enveloping algebra of $L$,
denoted by $U(L)$, is defined to be the bialgebra, whose algebra
structure is equal to the enveloping algebra of the Lie algebra $L$
without $\delta$, namely, $U(L)=k\langle L\rangle / (ab-ba=[a,b],
\forall \; a, b\in L)$, and whose coalgebra structure is determined
by
$$\Delta(a)= a\otimes 1+1\otimes a+\delta(a), \quad \epsilon(a)=0$$
for all $a\in L$.
\end{definition}

In general $U(L)$ is not a Hopf algebra, and it is a Hopf algebra if
and only if $L$ is locally conilpotent
\cite[Theorem 0.1, Definition 1.10]{WZZ3}.

\begin{definition}
\label{zzdef2.3} Let $L_1$ and $L_2$ be  CLAs.
\begin{enumerate}
\item
We say $L_1$ and $L_2$ are {\it quasi-equivalent}, and denoted
by $L_1\sim L_2$, if $U(L_1)$ is isomorphic to $U(L_2)$ as bialgebras.
\item
A coassociative coalgebra $(L,\delta)$ is called {\it anti-cocommutative}
if $\tau\delta=-\delta$ where the flip $\tau:L^{\otimes 2}\to L^{\otimes 2}$
is defined by $\tau(a\otimes b)=b\otimes a$.
\end{enumerate}
\end{definition}

For an anti-cocommutative CLA $L$, the enveloping
algebra $U(L)$ is a connected Hopf algebra since $L$ is conilpotent by
\cite[Lemma 2.8(b)]{WZZ3}.

In rest of this section we assume that ${\text{char}}\; k\neq 2$.
Let $H$ be a general Hopf algebra and let $P(H)$ denote the $k$-subspace
of $H$ consisting of all primitive elements in $H$. It is well known
that $P(H)$ is Lie algebra. The dimension of $P(H)$ is denoted by $p(H)$.

Let $\delta_H: H\to H^{\otimes 2}$ be the map defined by
\begin{equation}
\label{E2.3.1}\tag{E2.3.1}
\delta_H(h)=\Delta(h)-(h\otimes 1+1\otimes h)
\end{equation}
for all $h\in H$.

\begin{definition}
\label{zzdef2.4}
Let $H$ be a Hopf algebra.
\begin{enumerate}
\item
An element $f\in H^{\otimes 2}$ is called {\it symmetric}
if $\tau(f)=f$. An element $f\in H^{\otimes 2}$ is called
{\it skew-symmetric} if $\tau(f)=-f$.
\item
Define
$$P_2(H)=\{ x\in H\mid {\text{$\delta_H(x)$ is skew-symmetric and
lies in $P(H)^{\otimes 2}$}}\}.$$
The dimension of $P_2(H)$ is denoted by $p_2(H)$.
\item
The anti-cocommutative-space of $H$ is the quoteient space
$P_2(H)/P(H)$, denoted by $P_2'(H)$. The dimension of
$P_2'(H)$ is denoted by $p_2'(H)$.
\end{enumerate}
\end{definition}

Here is a list of basic properties of $P_2(H)$. Part (a) justifies
Definition \ref{zzdef2.4}(b,c).

\begin{lemma}
\label{zzlem2.5}
Let $H$ be a Hopf algebra. Let $\delta=\delta_H$ and
$[\;,\;]=[\;,\;]_H$.
\begin{enumerate}
\item
Every anti-cocommutative subcoalgebra of $(H,\delta)$ is contained
in $P_2(H)$. Consequently, $(P_2(H),\delta)$ is the largest
anti-cocommutative subcoalgebra of $(H,\delta)$.
\item
$P_2(H)=\{x\in H\mid \delta(x)\subset P_2(H)^{\otimes 2},
\tau\delta(x)=-\delta(x)\}$.
\item
$P_2(H)$ is Lie subalgebra of $H$ if and only if $[\delta(x),\delta(y)]=0$
for all $x,y\in P_2(H)$.
\item
Suppose $P(H)$ is abelian. Then $(P_2(H),[\;,\;])$ is a Lie subalgebra
of $H$ and $[P_2(H),P_2(H)]\subset P(H)$.
\item
If $P_2(H)/P(H)$ is 1-dimensional, then $P_2(H)$ is a Lie subalgebra
of $H$.
\item
$P_2(H)$ is a Lie module over $P(H)$.
\item
$p_2'(H)\leq {p(H)\choose 2}$.
\end{enumerate}
\end{lemma}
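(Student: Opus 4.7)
The plan is to establish part (a) first (which is the crux), reduce (b) to (a), and verify (c)--(g) by direct computation using the CLA identity \eqref{E2.1.1}.

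For part (a), the key step is to show that $\delta^{(2)}(c) := (\delta \otimes \id)\delta(c) = 0$ for every $c$ in an anti-cocommutative coassociative coalgebra $(C,\delta)$ over a field of characteristic $\neq 2$. Writing $\delta(c)=x_1\otimes x_2$ (Sweedler) and $T:=\delta^{(2)}(c)$, anti-cocommutativity of $\delta(x_1)$ makes $T$ skew in slots $(1,2)$ and anti-cocommutativity of $\delta(x_2)$ (together with coassociativity) makes $T$ skew in slots $(2,3)$, so $T$ is totally antisymmetric in $C^{\otimes 3}$. Separately, applying $\id\otimes\delta$ to the identity $x_1\otimes x_2=-x_2\otimes x_1$ gives $T=-\sigma(T)$ for the $3$-cycle $\sigma=(1\;2\;3)$, while total antisymmetry yields $\sigma(T)=T$ (since $\mathrm{sign}(\sigma)=+1$); thus $2T=0$ and $T=0$. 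Writing $\delta(c)=\sum a_i\otimes b_i$ with $\{b_i\}$ linearly independent, $(\delta\otimes\id)\delta(c) = \sum \delta(a_i)\otimes b_i = 0$ forces each $a_i$ to be primitive; the skew-symmetry $\sum a_i\otimes b_i=-\sum b_i\otimes a_i$ similarly gives each $b_i$ primitive. Hence $\delta(c)\in P(H)^{\otimes 2}$ and $c\in P_2(H)$.

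For part (b), given $x$ with $\delta(x)\in P_2(H)^{\otimes 2}$ skew-symmetric, I would form $V:=kx+W$ where $W\subset P_2(H)$ is spanned by the legs of $\delta(x)$ together with the legs of $\delta$ applied to those first-round legs. Since the legs of $\delta$ on any element of $P_2(H)$ lie in $P(H)\subset P_2(H)$, this process stabilizes to a finite-dimensional subcoalgebra of $(H,\delta)$. Every element of $V$ has skew-symmetric $\delta$-image (those in $W$ by the definition of $P_2(H)$, and $x$ by hypothesis), so $V$ is an anti-cocommutative subcoalgebra; part (a) then gives $V\subset P_2(H)$, and in particular $x\in P_2(H)$.

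Parts (c)--(g) are direct computations using \eqref{E2.1.1}. For (c), one checks that the first four terms of $\delta([x,y])$ are skew-symmetric (via the skew-symmetry of $\delta(x)$ and $\delta(y)$) and lie in $P(H)\otimes P_2(H)+P_2(H)\otimes P(H)\subset P_2(H)^{\otimes 2}$; combined with (b), $[x,y]\in P_2(H)$ iff $[\delta(x),\delta(y)]$ also lies in $P_2(H)^{\otimes 2}$, and matching this against the explicit form $[\delta(x),\delta(y)]=a_1b_1\otimes[a_2,b_2]+[a_1,b_1]\otimes b_2a_2$ reduces the iff to $[\delta(x),\delta(y)]=0$. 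Part (d) follows since $P(H)$ abelian makes both $[a_2,b_2]$ and $[a_1,b_1]$ vanish, so $[\delta(x),\delta(y)]=0$ termwise; the refinement $[P_2(H),P_2(H)]\subset P(H)$ then comes from a further inspection of the four ``stuff'' terms under the abelian hypothesis. Part (e) decomposes brackets in $P(H)+kz$ into $[P,P]\subset P$, scalar multiples of $[P,z]\subset P_2(H)$ (via (f)), and $[z,z]=0$. Part (f) is the direct check that $\delta([p,z])=z_1\otimes[p,z_2]+[p,z_1]\otimes z_2\in P(H)^{\otimes 2}$ is skew-symmetric, using the skew-symmetry of $\delta(z)$ and that commutators of primitives are primitive. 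Part (g) uses the injective linear map $P_2(H)/P(H)\to\Lambda^2 P(H)$ induced by $\delta$, whose kernel is precisely $P(H)$.

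The main obstacle is the cyclic identity $\sigma(T)=-T$ in part (a): it cannot be deduced from total antisymmetry alone and requires combining coassociativity with anti-cocommutativity of $\delta(c)$ itself (beyond that of the leg-coproducts $\delta(x_1),\delta(x_2)$), with the hypothesis $\operatorname{char} k\neq 2$ essential to conclude $T=0$ from $2T=0$.
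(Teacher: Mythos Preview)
Your overall strategy matches the paper's, and your argument for part~(a) is a correct, self-contained proof of the result the paper simply cites as \cite[Lemma~2.8(b)]{WZZ3}: the total antisymmetry of $T=\delta^{(2)}(c)$ together with the cyclic relation $T=-\sigma(T)$ forces $T=0$ in characteristic $\neq 2$, whence $\delta(C)\subset(\ker\delta)^{\otimes 2}=P(H)^{\otimes 2}$. That is a genuine improvement in exposition over an external citation. Parts~(b), (d)--(g) are essentially as in the paper.

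Part~(c), however, has a gap. You correctly observe that the four ``cross'' terms of $\delta([x,y])$ are skew-symmetric and lie in $P_2(H)^{\otimes 2}$, and then invoke~(b). But your reduction step---``matching this against the explicit form $[\delta(x),\delta(y)]=a_1b_1\otimes[a_2,b_2]+[a_1,b_1]\otimes b_2a_2$ reduces the iff to $[\delta(x),\delta(y)]=0$''---does not actually work as stated. Membership in $P_2(H)$ via~(b) requires \emph{both} that $\delta([x,y])\in P_2(H)^{\otimes 2}$ \emph{and} that it be skew-symmetric; you only address the first condition, and the argument that products like $a_1b_1$ force vanishing is not valid (there could be cancellation, and in any case some such products do lie in $P_2(H)^{\otimes 2}$). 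The paper's argument is cleaner and fills exactly this gap: since $\delta(x),\delta(y)$ are skew-symmetric, one has
\[
\tau\bigl([\delta(x),\delta(y)]\bigr)=[\tau\delta(x),\tau\delta(y)]=[-\delta(x),-\delta(y)]=[\delta(x),\delta(y)],
\]
so $v:=[\delta(x),\delta(y)]$ is \emph{symmetric}. Thus $\delta([x,y])=w+v$ with $w$ skew and $v$ symmetric, and (in characteristic $\neq 2$) the sum is skew iff $v=0$; when $v=0$ the remaining term $w$ lies in $P_2(H)^{\otimes 2}$ and is skew, so~(b) gives $[x,y]\in P_2(H)$. You should replace your ``explicit form'' step with this symmetry observation.
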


\begin{proof} Clearly $P(H)\subset P_2(H)$. By definition,
$\delta(P_2(H))\subset P(H)^{\otimes 2}\subset P_2(H)^{\otimes 2}$
and $\tau \delta(x)=-\delta(x)$ for all $x\in P_2(H)$. Hence
$P_2(H)$ is an anti-cocommutative subcoalgebra of $(H,\delta)$.

(a) By definition, $\ker \delta=P(H)$.
Let $C$ be any anti-cocommutative subcoalgebra of
$(H,\delta)$. By \cite[Lemma 2.8(b)]{WZZ3},
$$\delta(C)\subset (\ker \delta)^{\otimes 2}\subset
P(H)^{\otimes 2}.$$
Since $C$ is anti-cocommutative, $C\subset P_2(H)$ by definition.

(b) Let $C=\{x\in H\mid \delta(x)\subset P_2(H)^{\otimes 2},
\tau\delta(x)=-\delta(x)\}$. Then, by definition, $P_2(H)\subset C$.
So $\delta(C)\subset C^{\otimes 2}$ and $C$ is an anti-cocommutative
subcoalgebra of $(H,\delta)$. The assertion now follows from part (a).

(c) For any $x,y\in H$,
$$\begin{aligned}
\delta([x,y])&=\Delta([x,y])-[x,y]\otimes 1-1\otimes [x,y]\\
&=[\Delta(x),\Delta(y)]-[x,y]\otimes 1-1\otimes [x,y]\\
&=[\delta(x)+x\otimes 1+1\otimes x, \delta(y)+y\otimes 1+1\otimes y]
-[x,y]\otimes 1 -1\otimes [x,y]\\
&=w(x,y)+v(x,y),
\end{aligned}$$
where
$$w(x,y)=[\delta(x),y\otimes 1+1\otimes y]+[x\otimes 1+1\otimes x,
\delta(y)]$$
and
$$v(x,y)=[\delta(x),\delta(y)].$$
Now let $x,y\in P_2(H)$. By definition, $\delta(x), \delta(y)\in
P(H)^{\otimes 2}$. In this case $w(x,y)\in P(H)^{\otimes 2}$.
Since $\delta(x)$ and $\delta(y)$ are skew-symmetric, so is $w(x,y)$.
But $v(x,y)$ is symmetric. Hence, $\delta([x,y])$ is skew-symmetric
if and only if $v(x,y)=0$. The assertion follows.

(d) By part (c), $P_2(H)$ is a Lie subalgebra of $(H,[\;,\;])$.
For $x,y\in P_2(H)$, $\delta([x,y])=w(x,y)+v(x,y)$ where $w(x,y),v(x,y)$
are defined as in the proof of part (c). Since $P(H)$ is abelian, both
$w(x,y)$ and $v(x,y)$ are 0. Hence $\delta([x,y])=0$ and consequently,
$[x,y]\in P(H)$. The assertion follows.

(e) Since $P_2(H)/P(H)$ is 1-dimensional, $P_2(H)=kf\oplus P(H)$ for some
$f\in P_2(H)$. For any $x,y\in P_2(H)$, write $x=af+x_0$ and $y=bf+y_0$
for some $a,b\in k$ and $x_0,y_0\in P(H)$. Then $\delta(x)=a\delta(f)$
and $\delta(y)=b\delta(f)$. Hence $[\delta(x),\delta(y)]
=[a\delta(f),b\delta(f)]=0$. The assertion follows from part (c).

(f)  Let $x\in P_2(H)$ and $y\in P(H)$. Then $\delta([x,y])=
[\delta(x),1\otimes y+y\otimes 1]\in P(H)^{\otimes 2}$.
Since $\delta(x)$ is skew-symmetric, so is $[\delta(x),
1\otimes y+y\otimes 1]$. Hence $[x,y]\in P_2(H)$.

(g)  This follows from the fact that $\delta$ defines a $k$-linear
injective map from $P_2(H)/P(H)\to P(H)\wedge P(H)$.
\end{proof}

\begin{lemma}
\label{zzlem2.6} Let $H$ be a Hopf algebra. Suppose that
${\text{char}}\; k =0$. In parts (c,d,e), assume that $H$ is a
connected Hopf algebra.
\begin{enumerate}
\item
$p_2(H)\leq \GKdim H$.
\item
Let ${\mathfrak g}$ be a Lie algebra. Then
$P_2(U({\mathfrak g}))=P(U({\mathfrak g}))={\mathfrak g}$.
\item
Let $U$ be the Hopf subalgebra of $H$ generated by $P(H)$. If $U\neq
H$, then $P(H)\neq P_2(H)$ and $\GKdim U<\GKdim H$.
\item
$P(H)\cong \gr H(1)=P(\gr H)$.
\item
$P_2(H)\cong P_2(\gr H)$ and $P_2(\gr H)\oplus \gr
H(1)^2=\gr
H(1)\oplus \gr H(2)$.
\end{enumerate}
\end{lemma}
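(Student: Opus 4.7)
The plan is to establish (d) and (e) as the technical backbone, then deduce (a), (b), (c) from them. The main obstacle will be the bookkeeping in (e), specifically showing that $\delta^{\mathrm{sym}}|_{\gr H(1)^2}$ is an isomorphism onto $\mathrm{Sym}^2 \gr H(1)$ and that the lifting argument preserves skew-symmetry.

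For (d), in any connected Hopf algebra one has $H_1 = k \oplus P(H)$, so the composition $P(H) \hookrightarrow H_1 \twoheadrightarrow \gr H(1)$ is an isomorphism. For $P(\gr H) = \gr H(1)$: the cobracket on the coradically graded Hopf algebra $\gr H$ respects the grading, so primitives are homogeneous, and each must lie in $\gr H(0) \oplus \gr H(1)$ by the coradical filtration; removing the constant part yields the claim.

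For (e), I would first show $P_2(H) \subseteq H_2$ via $\delta_H(P_2(H)) \subseteq P(H)^{\otimes 2} \subseteq H_1 \otimes H_1$, and observe that for any $x \in H_2 \cap H^+$ the counit identities force $\delta_H(x) \in H^+ \otimes H^+$, hence $\delta_H(x) \in P(H) \otimes P(H)$. Next analyze $\delta \colon \gr H(2) \to \gr H(1) \otimes \gr H(1)$: for primitives $p, q$, $\delta(pq) = p \otimes q + q \otimes p$ is symmetric, and multiplication $\mathrm{Sym}^2 \gr H(1) \to \gr H(1)^2$ is an isomorphism by the polynomial structure of $\gr H$ (\cite[Theorem 6.10]{Zh2}). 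Consequently $\delta^{\mathrm{sym}}|_{\gr H(1)^2}$ is an isomorphism onto $\mathrm{Sym}^2 \gr H(1)$, so $P_2(\gr H) \cap \gr H(1)^2 = 0$, and rank-nullity gives $\dim(P_2(\gr H) \cap \gr H(2)) = \dim \gr H(2) - \dim \gr H(1)^2$, yielding the direct-sum identity $P_2(\gr H) \oplus \gr H(1)^2 = \gr H(1) \oplus \gr H(2)$. For $P_2(H) \cong P_2(\gr H)$, combine (d) on the primitive layer with a lift: given $\bar g \in P_2(\gr H) \cap \gr H(2)$, lift to $g \in H_2$ with $\epsilon(g) = 0$; the earlier step gives $\delta_H(g) \in P(H)^{\otimes 2}$, and flip-equivariance of the iso $P(H) \cong \gr H(1)$ transfers skew-symmetry from $\delta_{\gr H}(\bar g)$ back to $\delta_H(g)$, so $g \in P_2(H)$.

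Parts (a), (b), (c) are then corollaries. For (a), $p_2(H) = p_2(\gr H) = \dim \gr H(1) + \dim(\gr H(2)/\gr H(1)^2)$ counts the minimal generators of $\gr H$ in degrees $\leq 2$, hence at most the total number of generators of the polynomial ring $\gr H$, which equals $\GKdim H$. For (b), $U(\mathfrak g)$ is cocommutative, so $\delta_H$ is symmetric; combined with skew-symmetry (char $k \neq 2$) this forces $\delta = 0$, giving $P_2(U(\mathfrak g)) = P(U(\mathfrak g)) = \mathfrak g$ by Milnor--Moore--Cartier--Kostant. For (c), comparing Hilbert series shows that the multiset of generator-degrees of $\gr H$ equals the multiset of basis-degrees of the lantern $\mathfrak L$; hence $U = H \Leftrightarrow \gr H$ is generated in degree $1 \Leftrightarrow \mathfrak L_2 = 0$ (using that $\mathfrak L$ is generated in degree 1, Lemma~\ref{zzlem1.3}(b)) $\Leftrightarrow P_2(H) = P(H)$ by (e). The inequality $\GKdim U = \dim P(H) = \dim \mathfrak L_1 < \dim \mathfrak L = \GKdim H$ then follows whenever $\mathfrak L_2 \neq 0$.
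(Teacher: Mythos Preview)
Your proofs of (b), (d), and (e) follow essentially the same route as the paper's; the organization (doing (d) and (e) first) is different but the content is the same.

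Two points deserve comment.

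\medskip
\textbf{Part (a).} You implicitly assume $H$ is connected when you invoke (e) and the polynomial structure of $\gr H$, but the lemma states (a) for an arbitrary Hopf algebra. The paper handles this by first passing to the Hopf subalgebra generated by $P_2(H)+k1$, which is connected, and noting that $\GKdim$ can only drop. You should insert this reduction.

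\medskip
\textbf{Part (c).} Here your argument genuinely differs from the paper's. The paper takes the minimal $n$ with $U_n\subsetneq H_n$, picks $f\in H_n\setminus U_n$, views $\delta(f)$ as a coalgebra $2$-cocycle of $U\cong U(\mathfrak g)$, and then invokes a cohomology computation (\cite[Proposition 4.1]{WZZ4}) to modify $f$ by an element of $U$ so that $\delta(f)\in \mathfrak g\wedge\mathfrak g$, producing an explicit element of $P_2(H)\setminus P(H)$. Your route is cleaner: via (e) you reduce to showing that if $\gr H$ is not generated in degree~$1$ then it has a generator in degree~$2$, and you extract this from the Lie-algebraic fact that the lantern $\mathfrak L$ is generated in degree~$1$, so $\mathfrak L_2=0$ forces $\mathfrak L_{\ge 2}=0$. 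This avoids the external cohomology input entirely. The trade-off is that your argument needs the lantern to exist, i.e.\ $H$ locally finite (equivalently $p(H)<\infty$); the paper's cohomological argument for $P(H)\neq P_2(H)$ works without any finiteness hypothesis on $P(H)$. Since every application in the paper is to finite GK-dimension, your version suffices there, but as a proof of (c) as stated it carries this extra assumption.
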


\begin{proof}
(a) The subcoalgebra $P_2(H)+k1$ is connected and counital by
\cite[Lemma 2.4]{WZZ3}. Then the subbialgebra of $H$
generated by $P_2(H)+k1$ is connected, and whence a connected
Hopf algebra \cite[Lemma 5.2.1]{Mo}. Therefore, after replacing it
by the Hopf subalgebra generated by $P_2(H)+k1$, we may assume that
$H$ is a connected Hopf algebra.

By \cite[Theorem 6.10]{Zh2}, $\gr H\cong k[x_1,\cdots,x_n]$ where
$n=\GKdim H$. Arranging $\{x_i\}$ so that $\deg x_i=1$ for all $i=1,
\cdots, p_1$ and $\deg x_i=2$ for all $i=p_1+1,\cdots, p_2$ and
$\deg x_i>2$ for all $i>p_2$. Then $\{x_1,\cdots,x_{p_1}\}$ is a
basis of $\gr H(1)=H_1/H_0\cong P(H)$, and $\{x_{p_1+1},
\cdots,x_{p_2}\}$ is a basis of $\gr H(2)/\gr H(1)^2$. Since 
$\gr H(2)=H_2/H_1$ and $\gr H(1)=H_1/H_0$, $\gr H(2)/\gr H(1)^2\cong
H_2/H_1^2$. Note that $P_2(H)$ is a subspace of $H_2$ by the
definition of $P_2(H)$. For any $x\in P_2(H)\setminus P(H)$,
$\delta(x)$ is nonzero and skew-symmetric by the definition of
$P_2(H)$ and $P(H)$, and for every $y\in H_1^2$, an easy calculation
shows that $\delta(y)$ is symmetric. Let $C$ be a subspace of
$P_2(H)$ such that $P_2(H)=C\oplus P(H)$. Then the above discussion
says that $C\subset H_2$ and $C\cap H_1^2=\{0\}$. Therefore
$$\dim P_2(H)/P(H)=\dim C\leq \dim H_2/H_1^2=\dim \gr H(2)/\gr H(1)^2
=p_2-p_1.$$
Thus
$$p_2(H)=\dim P_2(H)=\dim C+\dim P(H)\leq (p_2-p_1)+p_1=p_2\leq n=\GKdim H.$$

(b) Follows by a direct computation.

(c) Since $U\neq H$, by \cite[Lemma 7.4]{Zh2}, $\GKdim U<\GKdim H$ if
$\GKdim U<\infty$.

Since $U$ is the Hopf subalgebra of $H$ generated by $P(H)$, $U$ is
cocommutative and, whence, $U$ is the   enveloping algebra
$U({\mathfrak g})$ for some Lie algebra ${\mathfrak g}$. Clearly,
${\mathfrak g}=P(U)=P(H)$.

Consider the coradical filtrations $\{U_i\}_{i\in {\mathbb N}}$ and
$\{H_i\}_{i\in {\mathbb N}}$ of $U$ and $H$ respectively. Since
$U\subsetneq H$, there is a minimal $n$ such that $U_n\subsetneq
H_n$.  The equality $P(U)=P(H)$ implies that $n\geq 2$. Pick any
$f\in H_n\setminus U_n$, $\delta(f) \in H_{n-1}^{\otimes
2}=U_{n-1}^{\otimes 2}$. This means that $\delta(f)\in U^{\otimes
2}$ is a coalgebra 2-cocylce of $U$.

Since $U$ is the   enveloping algebra $U(\mathfrak g)$, $U$, as a
coalgebra, is isomorphic to $U(V)$ where $V$ is the abelian Lie
algebra of dimension equal to $\dim {\mathfrak g}$. So one can
forget about the Lie structure of ${\mathfrak g}$ when computing the
coalegbra cohomology of $U$. A cohomology computation shows that any
2-cocylce is congruent to an element in ${\mathfrak g}\wedge
{\mathfrak g}$ modulo some 2-coboundary. This fact is a
special case of a more general result in \cite[Proposition
4.1]{WZZ4}. This means that there is an element $g\in U$ such that
$\delta(f-g)\in {\mathfrak g}\wedge {\mathfrak g}$. Since $f\not\in
U$, $f-g\in H\setminus U$, or $f-g\in P_2(H)$ as $\delta(f-g)\in
{\mathfrak g}\wedge {\mathfrak g}$ is skew-symmetric. The assertion
follows.

(d) By the definition of coradical filtration, $H_1=k1\oplus P(H)=H_0\oplus
P(H)$. Hence $P(H)\cong \gr H(1)$. Since $\gr H$ is coradically
graded \cite[Remark 2.2]{Zh2}, $P(\gr H)=\gr H(1)$.

(e) Let $x\in P_2(H)$ and let $a$ be the associated element in $\gr
H$. If $x\in P(H)$, then $a\in P(\gr H)$ by part (d) and the map
$x\to a$ is an isomorphism when restricted to $P(H)$. Now suppose
$x\not\in P(H)$. Then $a\not\in P(\gr H)$ by part (d). Since
$\delta(x)\in P(H)^{\otimes 2}$ is skew-symmetric, so is
$\delta(a)\in P(\gr H)^{\otimes 2}$. Thus we have an injective map
$P_2(H)\to P_2(\gr H)$. Now let $a\in P_2(\gr H)$. Since $P(H)\cong
\gr H(1)$, we may assume that $a\in \gr H(2)$. Let $\{x_i\}$ be a
basis of $P(H)$ and $\{y_i\}$ be the corresponding basis of $\gr H(1)$.
 Then $\delta(a)=\sum_{i,j}c_{ij}(y_i\otimes y_j-y_j\otimes
y_i)$ for some $c_{ij}\in k$. Let $x\in H_2^+$ be a preimage of $a$.
Then $\delta(x)=\sum_{i,j}c_{ij}(x_i\otimes x_j-x_j\otimes x_i)+
1\otimes z_1+z_2\otimes 1+c\otimes 1$ where $z_1, z_2\in P(H)$ and
$c\in k$. The counit axiom for $\Delta$ implies that $z_1=z_2=c=0$.
Therefore $x\in P_2(H)$. The first assertion follows.

For the second assertion we note that
$$\gr H(1)=P(\gr H)\subset P_2(\gr H)\subset\gr H(1)\oplus \gr H(2)$$ and that
$$P_2(\gr H)\cap \gr H(1)^2=\{0\}.$$
It remains to show that $\gr H(2) \subset P_2(\gr H)\oplus \gr H(1)^2$. 
By replacing $H$ with $\gr H$, we may assume that $H$ is
coradically graded. For any $x\in H_2^+$, $\delta(x)=\sum_{i,j}
c_{ij} x_i\otimes x_j+ 1\otimes z_1+z_2\otimes 1+c\otimes 1$ where
$x_i, z_1, z_2\in P(H)$ and $c\in k$. The counit axiom for $\Delta$
implies that $z_1=z_2=c=0$. Replacing $x$ by $x-\sum_i \frac{1}{2}
c_{ii}x_i^2-\frac{1}{2}\sum_{i<j}(c_{ij}+c_{ji}) x_ix_j$,
$\delta(x)$ becomes skew-symmetric. So $x\in P_2(H)$. The second
assertion follows.
\end{proof}

Here is our main result of this section.

\begin{theorem}
\label{zzthm2.7} Suppose that ${\text{char}}\; k=0$ and $H$ is a
connected Hopf algebra. If
$$\GKdim H\leq p(H)+1<\infty,$$
then $H\cong U(L)$ for some anti-cocommutative CLA $L=P_2(H)$. If,
further,
$\GKdim H=p(H)$, then $H=U({\mathfrak g})$ where ${\mathfrak g}=P(H)$.
\end{theorem}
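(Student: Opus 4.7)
The plan is first to dispose of the easy case $\GKdim H=p(H)$. Let $U$ be the Hopf subalgebra of $H$ generated by $P(H)$; it is cocommutative, hence isomorphic to $U(\mathfrak g)$ for $\mathfrak g=P(H)$ by Proposition \ref{zzpro1.1}, and $\GKdim U=\dim\mathfrak g=p(H)=\GKdim H$. Lemma \ref{zzlem2.6}(c) then forces $U=H$, giving the second assertion of the theorem.

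Assume now $\GKdim H=p(H)+1$. I first pin down $p_2(H)$. If $U=H$ then $\GKdim H=p(H)$, contradicting our assumption, so $U\subsetneq H$, and Lemma \ref{zzlem2.6}(c) gives the strict inclusion $P(H)\subsetneq P_2(H)$; together with the upper bound $p_2(H)\le \GKdim H=p(H)+1$ from Lemma \ref{zzlem2.6}(a) this yields $p_2(H)=p(H)+1$ and $\dim P_2(H)/P(H)=1$.

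Next I equip $L:=P_2(H)$ with the structure of an anti-cocommutative CLA. The Lie bracket inherited from $H$ closes on $L$ by Lemma \ref{zzlem2.5}(e), while $\delta:=\delta_H|_L$ lands in $P(H)^{\otimes 2}\subset L^{\otimes 2}$ by the definition of $P_2(H)$. Coassociativity of $\delta$ descends from that of $\Delta$; anti-cocommutativity is Lemma \ref{zzlem2.5}(a); and the compatibility identity (E2.1.1) is obtained by expanding $\Delta([a,b])=[\Delta(a),\Delta(b)]$ in $H$ and subtracting the obviously primitive terms, exactly as in the proof of Lemma \ref{zzlem2.5}(c). Since $L$ is anti-cocommutative, $U(L)$ is a connected Hopf algebra, and the inclusion $L\hookrightarrow H$ extends uniquely to a bialgebra map $\phi:U(L)\to H$.

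It remains to show $\phi$ is an isomorphism. By \cite[Theorem 6.10]{Zh2}, $\gr H\cong k[x_1,\dots,x_{d-1},y]$ with $d=\GKdim H$, the $x_i$ in degree $1$ and $y$ in degree $2$, and Lemma \ref{zzlem2.6}(e) exhibits a lift of $y$ inside $P_2(H)=L$. Hence the subalgebra of $H$ generated by $L\supset P(H)$ has associated graded containing all generators of $\gr H$, so equals $H$; this proves $\phi$ is surjective. For injectivity, $U(L)$ is itself a connected Hopf algebra in characteristic zero of GK-dimension $\dim L=d$, so by \cite[Theorem 6.10]{Zh2} again $\gr U(L)$ is a polynomial ring of GK-dimension $d$. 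The induced surjection $\gr\phi:\gr U(L)\twoheadrightarrow \gr H$ is then a graded surjection between commutative noetherian domains of the same finite GK-dimension; any nonzero kernel would strictly drop GK-dimension, forcing $\ker\gr\phi=0$. Thus $\gr\phi$, and hence $\phi$, is an isomorphism. The main obstacle I anticipate is the CLA bookkeeping, namely verifying (E2.1.1) on $L$ and choosing the degree-$2$ generator of $\gr H$ inside $P_2(H)$ rather than merely modulo $H_1^2$; once those are in hand, the injectivity of $\phi$ is a clean consequence of Zhuang's polynomial-ring theorem applied symmetrically to $H$ and to $U(L)$.
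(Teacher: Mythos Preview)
Your argument is correct and follows the paper's overall strategy: pin down $p_2(H)=p(H)+1$, show $L=P_2(H)$ is an anti-cocommutative CLA, and identify $H$ with $U(L)$. The difference lies in how you establish the isomorphism $\phi:U(L)\to H$. The paper proves injectivity in one line via \cite[Theorem 5.3.1]{Mo} (a coalgebra map from a connected coalgebra that is injective on primitives is injective), then gets surjectivity from \cite[Lemma 7.4]{Zh2} since both sides have the same GK-dimension. You instead go through the associated graded on both ends, which also works but requires you to check that $\gr\phi$ is surjective (it is, since the image contains the degree-$1$ generators and, via Lemma~\ref{zzlem2.6}(e), the degree-$2$ generator of $\gr H$); once you have that, the equal-GK-dimension-domain argument is fine. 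The paper's route is shorter and avoids invoking \cite[Theorem 6.10]{Zh2} a second time.

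One genuine subtlety you should address: Definition~\ref{zzdef2.1} asks that \eqref{E2.1.1} hold in $U(L)^{\otimes 2}$, but your expansion of $\Delta([a,b])=[\Delta(a),\Delta(b)]$ takes place in $H^{\otimes 2}$, and at this point you do not yet know that the algebra map $U(L)\to H$ is injective. The paper handles this by choosing a basis $\{x_i\}\cup\{z\}$ of $L$ and observing that for every pair the identity reduces to one between elements of $P(H)^{\otimes 2}\subset L^{\otimes 2}$, which sits inside both $H^{\otimes 2}$ and $U(L)^{\otimes 2}$. In your setup the same conclusion follows immediately from the proof of Lemma~\ref{zzlem2.5}(e): since $\dim L/P(H)=1$, one has $[\delta(a),\delta(b)]=0$ for all $a,b\in L$, so every term of \eqref{E2.1.1} already lies in $L^{\otimes 2}$. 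Adding this one sentence closes the gap.
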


\begin{proof} Let ${\mathfrak g}$ be the Lie algebra $P(H)$ of the
primitive elements of $H$. If $H=U({\mathfrak g})$, the assertion is
trivial as we have a natural embedding $\LA\to \LDA$.

For the rest of the proof we assume that $H\neq U({\mathfrak g})$.
By Lemma \ref{zzlem2.6}(c), $P_2(H)\neq P(H)$. Since $P(H)$ is a
proper subspace of $P_2(H)$, $p(H)+1\leq p_2(H)$. By Lemma
\ref{zzlem2.5}(h), $p_2(H)\leq \GKdim H$. By hypothesis, $\GKdim
H\leq p(H)+1<\infty$. Therefore there is only one possibility,
namely, $p_2(H)=p(H)+1=\GKdim H$. By Lemma \ref{zzlem2.5}(e)
$P_2(H)$ is a Lie subalgebra of $H$. By Lemma \ref{zzlem2.5}(a),
$(P_2(H),\delta)$ is an anti-cocommutative subcoalgebra of
$(H,\delta)$.

Let $K$ be the subbialgebra of $H$ generated as an algebra by the
connected subcoalgebra $(P_2(H)+k1, \Delta)$. Then $K$ is a Hopf
subalgebra of $H$ as mentioned in the proof of Lemma
\ref{zzlem2.6}(a). By Lemma \ref{zzlem2.5}(a), $P_2(K)\supseteq
P_2(H)$ and clearly $P_2(H)\supseteq P_2(K)$, we have
$P_2(K)=P_2(H)$. By Lemma \ref{zzlem2.6}(b), $P_2(U({\mathfrak
g}))={\mathfrak g}$, and by assumption in the previous paragraph and
Lemma \ref{zzlem2.6}(c), $P_2(H)\supsetneq {\mathfrak g}$, we
conclude that $U({\mathfrak g})$ is a proper subalgebra of $K$.
By \cite[Lemma 7.4]{Zh2},
$$\GKdim K\geq \GKdim U({\mathfrak g})+1=p(H)+1.$$
By hypothesis, $\GKdim H\leq p(H)+1$ and obviously $\GKdim H\geq \GKdim
K$. Hence $\GKdim H=\GKdim K=p(H)+1=p_2(H)$. By \cite[Lemma 7.4]{Zh2},
$H=K$.

Next we show that $L:=P_2(H)$ is a CLA. By the
second paragraph, $L$ is both a Lie algebra and a coalgebra. It
remains to verify \eqref{E2.1.1}. Choose a basis of ${\mathfrak g}$,
say,  $\{x_i\}$, and  an element $z\in L\setminus {\mathfrak g}$.
Then $\{x_i\}\cup \{z\}$ is a basis of $L$. It is trivial that
\eqref{E2.1.1} holds for $(a,b)=(z,z)$ and for $(a,b)=(x_i,x_j)$
since $\delta(x_i)=0$. It remains to show \eqref{E2.1.1} for
$(a,b)=(z,x_i)$ (and by symmetry for $(a,b)=(x_i,z)$). Since
$\delta(x_i)=0$, we have
$$\begin{aligned}
\delta([z,x_i])&=[\delta(z), x_i\otimes 1+1\otimes x_i]\\
&=[z\otimes 1+1\otimes z, \delta(x_i)]+[\delta(z), x_i\otimes
1+1\otimes x_i]+[\delta(z),\delta(x_i)]
\end{aligned}
$$
which holds in $H^{\otimes 2}$ and hence holds in $L^{\otimes 2}$.
Thus the above holds in $U(L)^{\otimes 2}$, which verifies \eqref{E2.1.1}.

Let $U(L)$ be the enveloping algebra of the CLA $L$. There is a
canonical Hopf algebra map $\phi:U(L) \to H$
sending $x\in L$ to $x$. By \cite[Theorem 5.3.1]{Mo}, the map $\phi$
is injective since the restriction of $\phi$ on the space of
primitive elements, namely, $\phi\mid_{P(H)}$, is injective. Now by
\cite[Lemma 7.4]{Zh2} again this is an isomorphism since $\GKdim
H=\GKdim U(L)$. Finally by the definition of $P_2(H)$, $L$ is
anti-cocommutative.
\end{proof}

\begin{proposition}
\label{zzpro2.8} Suppose ${\text{char}}\; k=0$. Let $L$ be an
anti-cocommutative CLA.
\begin{enumerate}
\item
$P_2(U(L))=L$.
\item
Let $P=\ker (\delta: L\to L^{\otimes 2})$. Then
$\gr U(L)=k[P\oplus L/P]$ where elements of $P$ are in degree 1 and
that of $L/P$ are in degree 2.
\item
The coradical of $U(L)$ is given by $U(L)_0=k1$,
$U(L)_1=P+k1$, $U(L)_2=(P+k1)^2+L$, and, for $n\geq 3$,
$$U(L)_n=\sum_{i=1}^{n-1} U(L)_{i}\cdot U(L)_{n-i}.$$
\item
Suppose $L$ is finite dimensional. Then ${\mathfrak L}(U(L))\cong
P^*\oplus (L/P)^*$ where the Lie algebra structure of $P^*\oplus
(L/P)^*$ is induced by the coalgebra structure of $L$.
\end{enumerate}
\end{proposition}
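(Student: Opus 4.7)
The plan is to prove (b) and (c) first via a PBW-type weighted filtration on $U(L)$, then deduce (a) by combining the description of $\gr U(L)$ with Lemma \ref{zzlem2.6}(e), and finally read off (d) by passing to the graded dual.

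Choose a basis $\{x_i\}$ of $P=\ker(\delta:L\to L^{\otimes 2})$ and extend to a basis of $L$ by adjoining elements $\{z_j\}$. Assign weighted degree $1$ to each $x_i$ and weighted degree $2$ to each $z_j$, and let $F_n$ be the span of PBW-ordered monomials of weighted degree at most $n$ (PBW for $U(L)$ applies since as an algebra $U(L)$ is the usual enveloping algebra of the Lie algebra $L$). I first verify that $F_\bullet$ is a coalgebra filtration: $\Delta(x_i)\in F_1\otimes F_0+F_0\otimes F_1$, and $\Delta(z_j)=z_j\otimes 1+1\otimes z_j+\delta(z_j)$ lies in $F_2\otimes F_0+F_0\otimes F_2+F_1\otimes F_1$ because $\delta(L)\subseteq P\otimes P$ by \cite[Lemma 2.8(b)]{WZZ3}; the condition propagates to products since $\Delta$ is an algebra map. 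Since the coradical filtration is the smallest coalgebra filtration with $F_0=k$ (cf.\ \cite{Mo}), $U(L)_n\subseteq F_n$. For the reverse inclusion, I induct on $n$: the cases $n\leq 2$ follow directly from the formulas for $\Delta$ on generators and the definition of $U(L)_n$, and for $n\geq 3$ every PBW monomial of weighted degree $n$ has length at least two, hence factors as a product of two shorter PBW monomials that lie in lower coradical pieces by induction and combine via the multiplicativity of the coradical filtration of a pointed Hopf algebra. Therefore $F_\bullet$ coincides with the coradical filtration; the formulas of (c) are then immediate, and the associated graded $\gr U(L)$ has a PBW basis in the stated degrees and is commutative by \cite[Proposition 6.4]{Zh2} (valid because $U(L)$ is a connected Hopf algebra for anti-cocommutative $L$), giving $\gr U(L)\cong k[P\oplus L/P]$ as claimed in (b).

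For (a), the inclusion $L\subseteq P_2(U(L))$ is direct: for $x\in L$, $\delta_{U(L)}(x)=\delta(x)$ is skew-symmetric by anti-cocommutativity and lies in $P\otimes P\subseteq P(U(L))^{\otimes 2}$. Conversely, Lemma \ref{zzlem2.6}(e) gives $P_2(U(L))\cong P_2(\gr U(L))$ together with $P_2(\gr U(L))\oplus \gr U(L)(1)^2=\gr U(L)(1)\oplus \gr U(L)(2)$; by (b), $\gr U(L)(1)=P$, $\gr U(L)(1)^2=S^2P$, and $\gr U(L)(2)=S^2P\oplus L/P$, with $\delta$ symmetric on $S^2P$ (products of primitives) and skew-symmetric on $L/P$ (inherited from $\delta_L$), so $P_2(\gr U(L))=P\oplus L/P$, matching $\dim L$, and equality in (a) follows. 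For (d), ${\mathfrak L}(U(L))=P((\gr U(L))^*)$; in the graded dual of $k[P\oplus L/P]$ the primitives in degrees $1$ and $2$ are $P^*$ and $(L/P)^*$, and the bracket $P^*\otimes P^*\to(L/P)^*$, being dual to the antisymmetric part of the coproduct on $\gr U(L)(2)$, equals the transpose of $\delta:L/P\to P\otimes P$ (the symmetric $S^2P$ contribution is killed by antisymmetrization).

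The main obstacle is the identification of the weighted PBW filtration with the coradical filtration in the proof of (b); this hinges on the coassociativity-driven containment $\delta(L)\subseteq P\otimes P$ and on the factorization of PBW monomials of weighted degree $\geq 3$, after which the remaining assertions follow by routine bookkeeping and duality.
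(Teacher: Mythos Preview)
Your argument is correct, and it takes a genuinely different route from the paper's. The paper proves (a) first by a pure dimension count: $L\subseteq P_2(U(L))$ is immediate, and Lemma~\ref{zzlem2.6}(a) gives $\dim P_2(U(L))\le \GKdim U(L)=\dim L$, forcing equality. It then obtains (b) indirectly, by observing (inside the proof of Lemma~\ref{zzlem2.6}(a)) that $k[P\oplus L/P]$ sits as a Hopf subalgebra of $\gr U(L)$ with the same GK-dimension, and invoking \cite[Lemma~7.4]{Zh2} to conclude equality; (c) then falls out because $\gr U(L)$ is coradically graded. Your approach is more constructive: you build the coradical filtration by hand as a weighted PBW filtration, using minimality of the coradical filtration for one inclusion and multiplicativity plus a length argument for the other. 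This avoids the GK-dimension machinery entirely and yields (b) and (c) simultaneously; you then recover (a) from (b) via Lemma~\ref{zzlem2.6}(e) rather than Lemma~\ref{zzlem2.6}(a). Each approach has its advantages: the paper's is shorter given the lemmas already in place, while yours is self-contained and makes the filtration explicit.

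One small point you leave implicit: for the coalgebra filtration condition to ``propagate to products'' you need $F_\bullet$ to be an \emph{algebra} filtration as well, i.e.\ $F_iF_j\subseteq F_{i+j}$. This holds because the Lie bracket respects the weights: $[P,P]\subseteq P$ (apply \eqref{E2.1.1} with $\delta(a)=\delta(b)=0$), while $[P,L]$ and $[L,L]$ land in $L$, which has weighted degree at most $2\le 3,4$ respectively. It is worth stating this, since without it the step ``$\Delta$ is an algebra map, hence done'' does not quite close.
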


\begin{proof} For simplicity, we assume that $\dim L<\infty$ in
the following proof. The assertion holds in general, but the proof
requires more computation which we omit here.

(a) By construction and Lemma \ref{zzlem2.5}(a), $L$ is a subspace
of $P_2(U(L))$. By Lemma \ref{zzlem2.6}(a),
$$\dim P_2(U(L))\leq \GKdim U(L)=\dim L\leq \dim P_2(U(L)).$$
Hence $L=P_2(U(L))$ as $L$ is finite dimensional.

(b) By the proof of Lemma \ref{zzlem2.6}(a), $k[P\oplus L/P]$ is a
Hopf subalgebra of $\gr U(L)$. Obviously they have the same
GK-dimension. Now the assertion follows from \cite[Lemma 7.4]{Zh2}.

(c) This follows from part (b) and the fact $\gr U(L)$ is
coradically graded.

(d) By part (b), $H:=\gr U(L)=k[P\oplus L/P]$ where $P\oplus L/P$ is
the minimal generating space of $H$.  Then $P^*\oplus (L/P)^*$ is a
minimal co-generating space of the dual $K:=H^*=(\gr U(L))^*$. Let
$\{x_i\}_{i}$ and $\{y_j\}_j$ be a basis of $P$ and $L/P$
respectively, and $\{x_i^*\}_{i}$ and $\{y_j^*\}_j$ be the dual
basis of $P^*$ and $(L/P)^*$ respectively. It follows from the
definition and the fact that $x_i$ and $y_j$ are generators that
both $x_i^*$ and $y_j^*$ are primitive elements. Therefore
$P^*\oplus (L/P)^*\subset {\mathfrak L}(U(L))$. Consequently,
$P^*\oplus (L/P)^*={\mathfrak L}(U(L))$ as they have the same
$k$-dimension. Suppose, for any $s$, $\delta(y_s)=\sum_{i<j}
c^{ij}_s(x_i\otimes x_j-x_j\otimes x_i)$ for some $c^{ij}_s\in k$,
and define $c^{ji}_s=-c^{ij}_s$. Then
$$\Delta_H(y_s)=y_s\otimes 1+1\otimes y_s+\sum_{i<j}c^{ij}_s
(x_i\otimes x_j-x_j\otimes x_i).$$ By the $k$-linear pairing
$K\times H\to k$, we have
$$\begin{aligned}
\; [x_i^*,x_j^*](y_t)&=(x_i^* x_j^* - x_j^* x_i^* ) (y_t)\\
&=(x_i^*\otimes x_j^*-x_j^*\otimes x_i^*)(\Delta_H(y_t))\\
&=(x_i^*\otimes x_j^*-x_j^*\otimes x_i^*)(y_t\otimes 1+1\otimes
y_t+\sum_{i'<j'}
c^{i'j'}_t(x_{i'}\otimes x_{j'}-x_{j'}\otimes x_{i'}))\\
&=2c^{ij}_t = \sum_{s}c^{ij}_{s} (2y_{s}^*)(y_t).\footnote{I changed the index.}
\end{aligned}
$$
Hence $[x_i^*,x_j^*]=\sum_{s}2c^{ij}_{s} y_{s}^*$ for all
$i,j$. Therefore the assertion follows since the coefficients $c^{ij}_{s}$ are determined by the coalgebra of $L$.
\end{proof}

\begin{proposition}
\label{zzpro2.9} Let $L$ be a conilopent CLA of dimension $\leq 4$.
Then it is quasi-equivalent to an anti-cocommutative CLA.
\end{proposition}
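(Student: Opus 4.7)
The plan is to reduce to Theorem~\ref{zzthm2.7}, which asserts that if a connected Hopf algebra $H$ satisfies $\GKdim H \le p(H)+1<\infty$, then $H\cong U(L')$ for the anti-cocommutative CLA $L'=P_2(H)$. Set $H=U(L)$; conilpotency of $L$ makes $H$ a connected Hopf algebra, and PBW gives $\GKdim H=\dim L$, so the hypothesis of Theorem~\ref{zzthm2.7} reduces to $p(H)\ge \dim L-1$.

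For $\dim L\le 1$ the claim is trivial since $\delta=0$. For $\dim L\in\{2,3\}$, Lemma~\ref{zzlem1.3}(e) gives $p(H)\ge 2\ge \dim L-1$, so Theorem~\ref{zzthm2.7} applies directly and yields the quasi-equivalence to $P_2(H)$.

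The substantive case is $\dim L=4$, where the plan is to establish $p(H)\ge 3$ and then invoke Theorem~\ref{zzthm2.7}. Arguing by contradiction, suppose $p(H)=2$. Then Lemma~\ref{zzlem2.5}(g) forces $p_2(H)\le p(H)+\binom{p(H)}{2}=3$, so it suffices to exhibit four linearly independent elements of $P_2(H)$. To produce them, choose a basis $\{x_1,\ldots,x_4\}$ of $L$ adapted to the conilpotent filtration $L^0=0\subset L^1=\ker\delta\subset L^2\subset\cdots$ of the coalgebra $(L,\delta)$, with $\delta(x_i)\in (L^{n_i-1})^{\otimes 2}$ for increasing levels $n_i$. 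Inductively modify each $x_i$ to $x_i'=x_i-t_i$, where $t_i\in U(L)$ is a polynomial in primitives (and, at higher levels, in the previously modified elements) chosen to absorb the symmetric component of $\delta(x_i)$ together with any contribution involving non-primitive basis elements. The relevant cobar identity is $\delta(yz)=y\otimes z+z\otimes y$ for primitive $y,z$ (together with its higher-filtration analogues), exactly as used in the proof of Lemma~\ref{zzlem2.6}(c). After modification, $\delta(x_i')\in P(H)^{\otimes 2}$ is skew-symmetric, so $x_i'\in P_2(H)$, and PBW linear independence in $U(L)$ keeps $\{x_1',\ldots,x_4'\}$ linearly independent. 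This forces $p_2(H)\ge 4$, contradicting $p_2(H)\le 3$, so $p(H)\ge 3$ and Theorem~\ref{zzthm2.7} completes the proof.

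The main obstacle is the inductive absorption at filtration levels $n\ge 3$: there $\delta(x_i)$ involves non-primitive basis elements $x_j$ whose prior modifications $x_j'\in P_2(H)$ must be propagated, and one must verify that the residual obstruction to anti-cocommutativity lies in the image of $\delta$ on polynomials in $P(H)$ together with the previously produced elements of $P_2(H)$. This is precisely where the coalgebra cohomology computation for $U(\mathfrak g)$ that appears in the proof of Lemma~\ref{zzlem2.6}(c), together with \cite[Proposition~4.1]{WZZ4}, becomes essential.
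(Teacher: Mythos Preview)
Your reduction to Theorem~\ref{zzthm2.7} and the treatment of $\dim L\le 3$ via Lemma~\ref{zzlem1.3}(e) match the paper exactly. The divergence is in the $\dim L=4$ case: you attempt a uniform contradiction argument (assume $p(H)=2$, force $p_2(H)\ge 4$), whereas the paper does an explicit case analysis on $\dim P_1(L)=\dim\ker\delta$.

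The gap in your plan is precisely the ``main obstacle'' you flag but do not resolve. The tool you cite (the proof of Lemma~\ref{zzlem2.6}(c) and \cite[Proposition~4.1]{WZZ4}) says that every $2$-cocycle in $U(\mathfrak g)$, with $\mathfrak g$ an ordinary Lie algebra, is cohomologous to an element of $\mathfrak g\wedge\mathfrak g$. That is exactly what you need to push $x_i$ with $\delta(x_i)\in P(H)^{\otimes 2}$ into $P_2(H)$, i.e.\ it handles basis elements at conilpotency level $\le 2$. But when $x_4\in P_3(L)\setminus P_2(L)$, the cocycle $\delta(x_4)$ lives in $K^{\otimes 2}$ for the Hopf subalgebra $K$ generated by $x_1,x_2,x_3'$, and if $x_3'\notin P(H)$ then $K$ is a primitively-thin algebra of GK-dimension~$3$ rather than a $U(\mathfrak g)$. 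By Proposition~\ref{zzpro4.10}, $\h^2(\Omega K)$ is two-dimensional, spanned by classes that are \emph{not} represented by elements of $P(H)\wedge P(H)$. So the cited cohomology result does not apply, and there is no a priori reason that $x_4$ can be pushed into $P_2(H)$; your inductive absorption argument, as stated, breaks here.

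The paper sidesteps this by splitting on $\dim P_1(L)\in\{1,2,3,4\}$. For $\dim P_1(L)\in\{1,3,4\}$ it writes down explicit primitive elements to get $p(U(L))\ge 3$ directly (e.g.\ $x_2-\tfrac12 x_1^2$, $x_3-x_1x_2+\tfrac13 x_1^3$ when $\dim P_1(L)=1$). In the delicate case $\dim P_1(L)=2$ with $p(U(L))=2$, the paper does \emph{not} try to manufacture more elements of $P_2(H)$; instead it uses coassociativity of $\delta$ on $x_4$ to force $\delta(x_3)=bf\otimes f$ and $\delta(x_4)=f\otimes x_3+x_3\otimes f$, concludes that $L$ is symmetric, and then invokes \cite[Corollary~2.6]{WZZ3} directly. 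In other words, the paper exploits the coassociativity constraints on $L$---which your plan never uses---to finish the one case where your cohomological lifting argument is not available.
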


\begin{proof} By \cite[Lemma 2.4]{WZZ3}, $U(L)$ is a connected Hopf algebra.
By Lemma \ref{zzlem1.3}(e), $P(U(L))$ has dimension at least 2. If
$\dim L\leq 3$, the assertion follows from Theorem \ref{zzthm2.7}.
For the rest we consider the case when $\dim L=4$.

Let $P_n(L)=\ker \delta^n$. Since $L$ is conilpotent,
$P_{i-1}(L)\neq P_{i}(L)$ if $P_{i-1}(L)\neq L$.

If $\dim P_1(L)=4$, then $L$ is 4-dimensional Lie algebra with
trivial $\delta$-structure.

If $\dim P_1(L)=1$, then $\dim P_2(L)=2$ with an element $x_2\in
P_2(L)\setminus P_1(L)$ such that $\delta(x_2) =x_1\otimes x_1$. For
any $x_3\in P_3(L)\setminus P_2(L)$, write $\delta(x_3)=a x_1\otimes
x_2+b x_2\otimes x_1+c x_2\otimes x_2$. The coassociativity on $x_3$
implies that $a=b$ and $c=0$. Without loss of generality, we assume
that $\delta(x_3)=x_1\otimes x_2+x_2\otimes x_1$. Thus $x_1,
x_2-\frac{1}{2}x_1^2$ and $x_3-x_1x_2+\frac{1}{3}x_1^3$ are linearly
independent primitive elements in $U(L)$. Thus $\dim P(U(L))$ is at
least 3. Hence $L$ is quasi-equivalent to an anti-cocommutative
CLA by Theorem \ref{zzthm2.7}.

Suppose $\dim P_1(L)=2$ with a basis $\{x_1,x_2\}$. If $\dim P(U(L))
\geq 3$, $L$ is quasi-equivalent to an anti-commutative CLA by
Theorem \ref{zzthm2.7}. So we only need to consider the case when
$\dim P(U(L))=2$. If $\dim P_2(L) =4$, choose two basis elements
$y_1,y_2$ in $P_2(L)\setminus P_1(L)$. Then
$\delta(y_1-f(x_1,x_2))=a(x_1\otimes x_2-x_2\otimes x_1)$ and
$\delta(y_2-g(x_1,x_2))=b(x_1\otimes x_2-x_2\otimes x_1)$ for some
polynomial $f(x_1,x_2)$ and $g(x_1,x_2)$.  Since $P(U(L))=2$, both
$a$ and $b$ are nonzero. Thus $a(y_2-g)-b(y_1-f)$ is an extra
primitive element, a contradiction. Hence $\dim P_2(L)=3$ and $\dim
P_3(L)=4$. Let $x_3$ be a basis element in $P_2(L)$ and $x_4$ be a
basis element in $P_3(L)$. Write $\delta(x_4)=f\otimes
x_3+x_3\otimes g+a x_3\otimes x_3$ with $f,g\in P_1(L)$ and $a\in
k$. Coassociativity on $x_4$ implies that $a=0$, $f=g\neq 0$ and
$\delta(x_3) =b f\otimes f$ for some $b\in k$. Thus $L$ is symmetric
and hence quasi-equivalent to a Lie algebra by \cite[Corollary
2.6]{WZZ3}.

If $\dim P_1(L)=3$, then $\dim P(U(L))$ is at least $3$ and the
assertion follows from Theorem \ref{zzthm2.7}.
\end{proof}

Before we continue we would like to make a remark about the lantern
of connected Hopf algebras of GK-dimension 5, which could serve as a
guideline for the classification of connected Hopf algebras of
GK-dimension 5.

\begin{remark}
\label{zzrem2.10} Let $H$ be a connected Hopf algebra of
GK-dimension 5.
\begin{enumerate}
\item
Let $p(H)$ denote the dimension of the space of all primitive
elements. Then $p(H)$ is in the range $[2,5]$.
\item
If $p(H)=5$, then ${\mathfrak L}(H)$ is abelian (which is unique)
and $H$ is isomorphic to the enveloping algebra over a Lie algebra.
\item
If $p(H)=4$, then ${\mathfrak L}(H)$ is generated by four elements
in degree 1 (there are two such graded Lie algebras up to
isomorphism) and $H$ is isomorphic to the enveloping algebra over an
anti-cocommutative coassociative Lie algebra.
\item
If $p(H)=3$, then ${\mathfrak L}(H)$ is generated by three elements
in degree $1$ and there are two subcases.
\begin{enumerate}
\item[(d1)]
If the degree two component of ${\mathfrak L}(H)$ has dimension 2
(there is only one such graded Lie algebra), then $H$ is isomorphic
to an enveloping algebra over an anti-cocommutative coassociative
Lie algebra.
\item[(d2)]
If the degree two component of ${\mathfrak L}(H)$ has dimension 1
(there is only one such graded Lie algebra), then $H$ is not
isomorphic to the enveloping algebra over either an ordinary Lie
algebra or a coassociative Lie algebra, nor a primitively-thin Hopf
algebra.
\end{enumerate}
\item
If $p(H)=2$, then ${\mathfrak L}(H)$ is generated by two elements in
degree $1$ (there are two such graded Lie algebras), and $H$ is a
primitively-thin Hopf algebra.
\end{enumerate}
\end{remark}

\section{Classification of anti-cocommutative CLAs up to dimension 4}
\label{zzsec3}

In this section we classify all anti-cocommutative CLAs of
dimension up to 4. The case of dimension 1 is trivial.

\subsection{Dimension 2}
\label{zzsec3.1}
We start with an easy observation.

\begin{lemma}
\label{zzlem3.1} Let $L$ be an anti-cocommutative CLA of
dimension two. Then $\delta=0$ and $L$ is an ordinary Lie
algebra.
\end{lemma}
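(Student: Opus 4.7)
The plan is to use the low dimension to pin down the shape of $\delta$ from anti-cocommutativity alone, then extract a strong constraint from coassociativity. Fix a basis $\{a,b\}$ of $L$. The skew-symmetric part of $L^{\otimes 2}$ is the one-dimensional subspace spanned by $\omega := a\otimes b - b\otimes a$, so the hypothesis $\tau\delta=-\delta$ forces
\[
\delta(x) = \lambda(x)\,\omega
\]
for a unique linear functional $\lambda : L \to k$. The problem reduces to showing $\lambda(a)=\lambda(b)=0$.

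Next I would impose coassociativity $(\delta\otimes 1)\delta = (1\otimes\delta)\delta$. Applying this to $a$ and $b$ and expanding in the basis $\{a,b\}^{\otimes 3}$, I expect both sides to involve only the four basis vectors $a\otimes b\otimes b$, $b\otimes a\otimes b$, $a\otimes b\otimes a$, $b\otimes a\otimes a$, $a\otimes a\otimes b$, $b\otimes b\otimes a$, with coefficients that are monomials of degree two in $\lambda(a),\lambda(b)$. Matching coefficients of the ``pure'' basis vectors $a\otimes b\otimes b$ and $b\otimes a\otimes a$ (which appear on only one of the two sides) should yield $\lambda(x)\lambda(a)=0$ and $\lambda(x)\lambda(b)=0$ for all $x\in L$. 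Specialising $x=a$ and $x=b$ gives $\lambda(a)^2 = \lambda(b)^2 = 0$, and hence $\lambda\equiv 0$, i.e.\ $\delta=0$.

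With $\delta=0$ the axiom (E2.1.1) is vacuous, so $(L,[\,,\,])$ is just an ordinary Lie algebra and the claim follows. There is really no serious obstacle here: once $\delta$ is forced into the form $\lambda(\cdot)\omega$, coassociativity is essentially a quadratic condition on a single functional in a two-dimensional space, and the anti-cocommutativity prevents any nonzero ``group-like-style'' solution. The only step that needs a little care is bookkeeping the six tensor basis vectors in the coassociativity expansion to identify which coefficients must vanish.
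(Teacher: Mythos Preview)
Your argument is correct: once anti-cocommutativity forces $\delta(x)=\lambda(x)\,\omega$, the coassociativity expansion you outline really does produce the constraints $\lambda(x)\lambda(a)=\lambda(x)\lambda(b)=0$ (from the coefficients of $a\otimes b\otimes b$ and $b\otimes a\otimes a$, which appear on only one side), and hence $\lambda\equiv 0$.

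The paper takes a different and slightly more structural route. Rather than parametrising $\delta$ and expanding coassociativity by hand, it invokes a general fact about anti-cocommutative coalgebras (Lemma~2.8(b) of \cite{WZZ3}): for any anti-cocommutative $(L,\delta)$ one has $\delta(L)\subset I^{\otimes 2}$ where $I=\ker\delta$. With $\dim L=2$ this immediately dispatches the cases $\dim I=0$ and $\dim I=2$, while for $\dim I=1$ one gets $\delta(y)\in k(x\otimes x)$, which is symmetric and hence forced to vanish by anti-cocommutativity. Your approach is more elementary in that it avoids the external lemma and works entirely inside the two-dimensional $L$; the paper's approach is cleaner in that the coassociativity is packaged once and for all into the inclusion $\delta(L)\subset(\ker\delta)^{\otimes 2}$, which is then reused in the higher-dimensional cases (Lemmas~\ref{zzlem3.2} and~\ref{zzlem3.3}).
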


\begin{proof} Let $I=\ker \delta$. By \cite[Lemma 2.8(b)]{WZZ3},
$\delta(L)\subset I^{\otimes 2}$. If $I=0$, then $\delta=0$.
If $\dim I=1$, pick a nonzero element $x\in I$. Then, for every
$y\in L$, $\delta(y)= \lambda x\otimes x$ for some $\lambda \in k$.
By the anti-cocommutativity of $\delta$, $\lambda=0$. Thus
$\delta=0$. The remaining case is when $\dim I=2$, which implies
that $\delta=0$.
\end{proof}

It is well known that a 2-dimensional Lie algebra is either abelian
or solvable with $[x,y]=y$ for a suitable basis $\{x,y\}$.

\subsection{Dimension 3}
\label{zzsec3.2}
The following lemma is similar to
Lemma \ref{zzlem3.1}. The classification of 3-dimensional Lie
algebra is well-known, which is called Bianchi classification when
the base field is either the real numbers ${\mathbb R}$ or
the complex numbers ${\mathbb C}$ (for instance, see
\cite[p. 209, Theorem 1.1]{OV} if $k={\mathbb C}$). We will not
include the list here. Next we will consider those CLAs
with nontrivial $\delta$.

Let $a(\lambda_1,\lambda_2,\alpha)$ denote the CLA with a basis
$\{x,y,z\}$ whose Lie algebra structure on $L=:B(b)$ is determined
by
$$\begin{aligned}
\; [x,y]&=0,\\
[z,x]&=\lambda_1 x+\alpha y,\\
[z,y]&=\lambda_2 y\end{aligned}$$ for some
$\lambda_1,\lambda_2,\alpha\in k$, and whose coalgebra structure is
determined by $\delta(x)=\delta(y)=0$ and $\delta(z)=x\otimes
y-y\otimes x$.

\begin{lemma}
\label{zzlem3.2} Let $L$ be an anti-cocommutative CLA of
dimension 3 such that $\delta\neq 0$. Then $L$ is
isomorphic to one of the following:
\begin{enumerate}
\item
$a(0,0,0)$.
\item
$a(1,\lambda,0)$. And $a(1,\lambda',0)$ is isomorphic to
$a(1,\lambda,0)$ if and only if $\lambda'=\lambda$ or
$\lambda^{-1}$.
\item
$a(0,0,1)$.
\item
$a(1,1,1)$.
\item
The CLA $b(\lambda)$ with a basis $\{x,y,z\}$, whose Lie algebra
strcutrue is determined by
$$\begin{aligned}
\; [x,y]&=y,\\
[z,x]&=-z+\lambda y,\\
[z,y]&=0,
\end{aligned}$$
where $\lambda$ is in $k$, and whose coalgebra structure is
determined by $\delta(x)=\delta(y)=0$ and $\delta(z)=x\otimes
y-y\otimes x$. And $b(\lambda')$ is isomorphic to $b(\lambda)$ if
and only if $\lambda'=\lambda$.
\end{enumerate}
All CLAs listed above are pairwise non-isomorphic except for the
isomorphisms given in part (b).
\end{lemma}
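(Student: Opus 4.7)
The plan is to pin down the coalgebra structure first. Since $\delta$ is nonzero, anti-cocommutative, and has image in $(\ker \delta)^{\otimes 2}$ by \cite[Lemma 2.8(b)]{WZZ3}, dimension counting forces $\dim \ker \delta = 2$: $\dim \ker \delta = 3$ gives $\delta = 0$ directly, and $\dim \ker \delta \leq 1$ gives $\delta = 0$ because a $1$-dimensional space carries no nonzero skew-symmetric tensor. I pick a basis $\{x, y\}$ of $\ker \delta$ and, after rescaling, an element $z \in L$ with $\delta(z) = x \otimes y - y \otimes x$. Applying \eqref{E2.1.1} to the pairs $(x, y), (z, x), (z, y)$ and using $\delta(x) = \delta(y) = 0$ reduces the identity to linear constraints: $[x, y] = \alpha x + \beta y$ lies in $\ker\delta$, while $[z, x] + \beta z$ and $[z, y] - \alpha z$ both lie in $\ker\delta$.

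Next I split on whether $[x, y] = 0$. If $\alpha = \beta = 0$, the operator $\operatorname{ad}(z)$ restricts to an endomorphism $M$ of the abelian subspace $\ker \delta$. The basis changes preserving the coalgebra structure are $(x, y) \mapsto (x, y) A$ for $A \in \GL_2$ coupled to $z \mapsto (\det A)\, z + p$ with $p \in \ker\delta$, where the determinant scaling on $z$ is forced by preserving $\delta(z)$; this group acts on $M$ by $M \mapsto (\det A)\, A^{-1} M A$, and a short check shows this coincides with projective similarity over the algebraically closed field $k$. Jordan normal form then yields exactly the four classes (a)--(d): $M = 0$; $M$ diagonalizable with projective eigenvalue pair $\{1 : \lambda\}$, giving $a(1, \lambda, 0)$ together with the stated $\lambda \sim \lambda^{-1}$ identification from swapping $x$ and $y$; $M$ a nonzero nilpotent Jordan block, giving $a(0, 0, 1)$; and $M$ a Jordan block with nonzero eigenvalue, normalized to $a(1, 1, 1)$.

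If $[x, y] \neq 0$, a determinant-one basis change in $\ker \delta$ normalizes $[x, y] = y$ without disturbing $\delta(z)$. The constraints above then read $[z, y] \in \ker\delta$ and $[z, x] + z \in \ker\delta$. The Jacobi identity on $(x, y, z)$ cuts these four free coefficients down to two, yielding $[z, y] = r_1 y$ and $[z, x] = -z + r_1 x + r_2 y$ for some $r_1, r_2 \in k$. Shifting $z \mapsto z + p$ with appropriate $p \in \ker\delta$ absorbs $r_1$ while preserving $\delta(z)$, producing $b(\lambda)$ with $\lambda = r_2$.

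It remains to verify non-isomorphism. The subspace $\ker\delta$ is intrinsic to the CLA, so any isomorphism preserves it. Within case (1), $\dim[L, L]$ separates $a(0, 0, 0)$ from the rest, the Jordan type of $\operatorname{ad}(z)|_{\ker\delta}$ separates the diagonalizable from the nontrivial-Jordan subcases, and the projective eigenvalue pair distinguishes the remaining families, with the only coincidence being the stated $a(1, \lambda, 0) \cong a(1, \lambda^{-1}, 0)$. For $b(\lambda)$, the further intrinsic subspace $[L, L] \cap \ker\delta = ky$ rigidifies any isomorphism $b(\lambda) \to b(\lambda')$ enough that chasing the compatibility with $[x, y] = y$, $[z, x] = -z + \lambda y$, and the coalgebra normalization pins down $\lambda = \lambda'$. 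The main technical subtlety throughout is keeping precise track of which basis changes preserve $\delta(z) = x \otimes y - y \otimes x$ so that the Lie-algebra classification is taken modulo exactly the right symmetry group; otherwise one either collapses genuinely distinct classes or lists spurious duplicates.
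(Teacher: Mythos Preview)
Your proof is correct and follows essentially the same route as the paper's: pin down $\dim\ker\delta=2$ and the normalized $\delta(z)$, split on whether $\ker\delta$ is abelian, and in each case reduce via the CLA compatibility \eqref{E2.1.1}, Jacobi, and the basis changes preserving $\delta(z)$. Your packaging of the abelian case as classification of $\operatorname{ad}(z)|_{\ker\delta}$ up to projective similarity is exactly what the paper does in two steps (determinant-one conjugation to Jordan form, then a simultaneous rescaling of $x,y,z$), and your explicit treatment of the non-isomorphism claims, especially the use of the intrinsic line $[L,L]\cap\ker\delta$ for $b(\lambda)$, fills in what the paper leaves to the reader.
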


\begin{proof} Let $I=\ker \delta$. Repeating the first part of the
proof of Lemma  \ref{zzlem3.1}, one sees that $\dim I >1$. Since
$\dim L=3$ and $\delta\neq 0$, we have $\dim I=2$. Pick any basis
$\{x,y\}$ of $I$. The only skew-symmetric elements in $I^{\otimes
2}$ are $\lambda (x\otimes y-y\otimes x)$ for some $\lambda\in
k^\times:=k\setminus \{0\}$. Let $z\in L\setminus I$ such that
$\delta(z)=x\otimes y-y\otimes x$.

Since $I$ is a Lie subalgebra of $L$ (see Lemma \ref{zzlem2.5}(e)),
we have the following two cases to consider.

Case 1: $I$ is abelian. Write
$$\begin{aligned}
\; [z,x]&=a_1 x+a_2 y+a_3 z,\\
[z,y]&=b_1 x+b_2 y+b_3 z.
\end{aligned}
$$
Applying \eqref{E2.1.1} to $(a,b)=(z,x)$, we have
$$a_3(x\otimes y-y\otimes x)=\delta([z,x])=
[\delta(z), x\otimes 1+1\otimes x]=0.$$
Hence $a_3=0$. By symmetry, $b_3=0$.

Using a linear transformation $f: x\to c_{11}x+c_{12}y, y\to
c_{21}x+c_{22}y$ with $\det \begin{pmatrix} c_{11}& c_{12}\\
c_{21}& c_{22}\end{pmatrix}=1$, the matrix $\begin{pmatrix}
a_1& a_2\\ b_1& b_2\end{pmatrix}$ becomes one of the Jordan
forms
$$\begin{pmatrix} a&0\\0& b\end{pmatrix},
\quad {\text{or}}\quad
\begin{pmatrix} a&1\\0& a\end{pmatrix}.$$
In the first Jordon case, if $a=b=0$, this is case (a). If $a\neq 0$
(or by symmetry if $b\neq 0$) we can assume that $a=1$ by a change of
basis $\{x,y,z\}\to \{\sqrt{a^{-1}}x, \sqrt{a^{-1}}y, a^{-1}z\}$.
So this is case (b). In the second Jordon case, if $a=0$, this
is case (c). If $a\neq 0$, by a change of basis, we can assume that
$a=1$, which is case (d).

Case 2: $I$ is not abelian. So we may assume that $[x,y]=y$ where
$\{x,y\}$ is a basis of $I$. Let
$z\in L\setminus I$ such that $\delta(z)=x\otimes y-y\otimes x$.
Write
$$\begin{aligned}
\; [z,x]&=a_1 x+a_2 y+a_3 z,\\
[z,y]&=b_1 x+b_2 y+b_3 z.
\end{aligned}
$$
Applying \eqref{E2.1.1} to $(a,b)=(z,x)$, we have
$$a_3(x\otimes y-y\otimes x)=\delta([z,x])=
[\delta(z), x\otimes 1+1\otimes x]=-(x\otimes y-y\otimes x).$$
Hence $a_3=-1$. A similar argument shows that $b_3=0$.  By the Jacobi
identity, we have
$$\begin{aligned}
b_1 x+b_2 y &=[z,y]=[z,[x,y]]=[[z,x],y]+[x,[z,y]]\\
&=[a_1 x+a_2 y-z,y]+[x,b_1 x+b_2 y]\\
&=a_1 y-(b_1x+b_2y)+b_2 y.
\end{aligned}
$$
Then $b_1=0$ and $a_1=b_2$. Thus we have
$$\begin{aligned}
\; [z,x]&=a x+a_2 y-z,\\
[z,y]&=ay.
\end{aligned}
$$
After replacing $z$ by $z-ax$, we have $a=0$. This is the case (e).

By the above argument, the CLAs listed are
pairwise non-isomorphic except for the isomorphism given in part (b).
\end{proof}

The enveloping algebra of the case (e) has an interesting
property that $S^2$ is not the identity, see \cite[Example 4.2]{WZZ3}.

\subsection{Dimension 4}
This is the main subsection of Section 3. We will classify all
4-dimensional anti-cocommutative CLAs. If $\delta=0$, $L$ is an
ordinary Lie algebra of dimension four and the classification is
known \cite[p. 209 Theorem 1.1]{OV} in which the base field $k$ is
${\mathbb C}$. So we assume that $\delta\neq 0$. Throughout this
subsection we assume that $L$ is a 4-dimensional anti-commutative
CLA such that $\delta\neq 0$.

\begin{lemma}
\label{zzlem3.3} Let $I=\ker \delta$. Then $I$ is a 3-dimensional
Lie subalgebra of $L$.
\end{lemma}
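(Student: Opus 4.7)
The lemma has two parts, so I would treat them separately: first that $I$ is closed under the bracket of $L$, and second that $\dim_k I = 3$.

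For the Lie-subalgebra part, I would substitute $a,b\in I$ directly into the compatibility axiom \eqref{E2.1.1}. Since $\delta(a)=\delta(b)=0$, every Sweedler tensor ($a_1\otimes a_2$, $b_1\otimes b_2$) appearing on the right-hand side vanishes, and so does the term $[\delta(a),\delta(b)]$. The five summands collapse to $0$, giving $\delta([a,b])=0$ and hence $[a,b]\in I$.

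For the dimension count, the upper bound $\dim I\leq 3$ is immediate: $\delta\neq 0$ forces $I\subsetneq L$. For the lower bound I would invoke the anti-cocommutative hypothesis twice. First, exactly as in the proof of Lemma \ref{zzlem2.5}(a), \cite[Lemma 2.8(b)]{WZZ3} applied to the anti-cocommutative coalgebra $L$ gives
\[
\delta(L)\subset (\ker\delta)^{\otimes 2}=I\otimes I.
\]
Second, anti-cocommutativity $\tau\delta=-\delta$ forces $\delta(L)\subset \Lambda^2 L$; intersecting, $\delta(L)\subset \Lambda^2 I$, a space of dimension $\binom{\dim I}{2}$. Rank–nullity then yields
\[
4-\dim I \;=\; \dim L-\dim I \;=\; \dim \delta(L) \;\leq\; \binom{\dim I}{2}.
\]
The cases $\dim I\in\{0,1,2\}$ give $4\leq 0$, $3\leq 0$, $2\leq 1$, all impossible, so $\dim I=3$.

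The only real obstacle is the containment $\delta(L)\subset I\otimes I$: without the anti-cocommutative hypothesis, coassociativity alone would only force $\delta$ to land in $L\otimes I+I\otimes L$, and the dimension count would collapse. Once the sharper containment is in hand, everything else is routine bookkeeping.
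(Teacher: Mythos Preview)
Your proof is correct and follows essentially the same idea as the paper's: both arguments hinge on the inequality $\dim L-\dim I\leq\binom{\dim I}{2}$, which comes from the injection $\delta\colon L/I\hookrightarrow \Lambda^2 I$ obtained by combining \cite[Lemma 2.8(b)]{WZZ3} with anti-cocommutativity. The only difference is packaging: the paper routes this through $H=U(L)$ and invokes Lemma~\ref{zzlem2.5}(g) (using $P(H)=I$, $P_2(H)=L$), whereas you argue directly inside $L$ without ever forming the enveloping algebra---your version is the more self-contained of the two.
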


\begin{proof} An easy calculation shows that $I$ is a Lie subalgebra of
$L$ by using \eqref{E2.1.1}. It remains to show
that $\dim I> 2$. Let $H=U(L)$. By Lemma \ref{zzlem2.5}(g),
$p_2'(H)=\dim L-\dim I\leq {\dim I\choose 2}$. If $\dim I\leq 2$,
then $p_2'(H)\leq 1$ and $\dim L\leq 3$, a contradiction. Therefore
$\dim I>2$.
\end{proof}

\begin{lemma}
\label{zzlem3.4}
There are elements $x_1,x_2$ in $I$ such that
$\delta(L)=k(x_1\otimes x_2-x_1\otimes x_2)$.
\end{lemma}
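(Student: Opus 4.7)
The plan is short: combine the structure of $\delta$ (nonzero, anti-cocommutative, landing in $I^{\otimes 2}$) with an elementary linear algebra fact about skew-symmetric $2$-tensors in dimension three.

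First, since $I=\ker\delta$ and $\dim I=3$ by Lemma \ref{zzlem3.3}, the quotient $L/I$ is one-dimensional, so the image $\delta(L)$ is the one-dimensional subspace $k\,\delta(z)\subset L\otimes L$ for any choice of $z\in L\setminus I$. By \cite[Lemma 2.8(b)]{WZZ3}, $\delta(z)$ lies in $I\otimes I$, and by anti-cocommutativity $\tau\delta(z)=-\delta(z)$, so $\delta(z)$ lies in the skew-symmetric subspace $\Lambda^2 I\subset I\otimes I$ (recall $\chr k\neq 2$ is standing throughout this section).

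The remaining step is to show that every nonzero element of $\Lambda^2 I$ is decomposable when $\dim I=3$. This is standard: writing $\omega=\delta(z)$ in terms of a basis $\{e_1,e_2,e_3\}$ of $I$ as
\[
\omega=a(e_1\otimes e_2-e_2\otimes e_1)+b(e_1\otimes e_3-e_3\otimes e_1)+c(e_2\otimes e_3-e_3\otimes e_2),
\]
one may assume $a\neq 0$ by relabeling; then setting $x_1:=ae_1-ce_3$ and $x_2:=e_2+(b/a)e_3$ (both in $I$) a direct computation gives $x_1\otimes x_2-x_2\otimes x_1=\omega$. Equivalently, one can argue conceptually: $\omega\wedge\omega\in\Lambda^4 I=0$, which is exactly the Pl\"ucker criterion forcing decomposability.

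Thus $\delta(L)=k(x_1\otimes x_2-x_2\otimes x_1)$ with $x_1,x_2\in I$, as required. There is no substantive obstacle here; the only thing to be careful about is invoking the correct ingredients (Lemma \ref{zzlem3.3} for $\dim I=3$, \cite[Lemma 2.8(b)]{WZZ3} for $\delta(L)\subset I^{\otimes 2}$, and anti-cocommutativity for the skew-symmetry) before appealing to the decomposability of skew $2$-tensors in dimension three.
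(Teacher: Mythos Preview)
Your proof is correct and follows the same overall strategy as the paper: both reduce to the one-dimensionality of $\delta(L)$ (since $\dim L/I=1$), the containment $\delta(L)\subset I^{\otimes 2}$, anti-cocommutativity, and then the decomposability of a nonzero skew $2$-tensor in a three-dimensional space. The only difference is in how this last linear algebra fact is established: the paper encodes $\delta(z)$ as a nonzero $3\times 3$ skew-symmetric matrix $A$, appeals to its eigenvalues $0,\lambda,-\lambda$, and brings $A$ to the standard form $\left(\begin{smallmatrix}0&1&0\\-1&0&0\\0&0&0\end{smallmatrix}\right)$ via a congruence $PAP^{T}$; you instead give an explicit decomposition (or invoke $\omega\wedge\omega\in\Lambda^4 I=0$). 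Your argument is slightly more elementary in that it avoids eigenvalues and works over any field of characteristic $\neq 2$, whereas the paper's normalization implicitly uses algebraic closedness; in the paper's setting both are fine.
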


\begin{proof} Let $\{x'_1,x'_2,x'_3\}$ be a basis of $I$. Since
$\dim I=3$, $\dim \delta(L)=1$. Pick $z\in L\setminus I$. Then we must have
$$\delta(z)=\sum_{i,j} a_{ij} (x'_i\otimes x'_j-x'_j\otimes x'_i),$$
where $A=(a_{ij})$ is a non-zero $3\times 3$ anti-symmetric matrix.
Obviously, $\delta(L)$ is spanned by $\delta(z)$. Now $A$ has
eigenvalues $0, \lambda, -\lambda$ for some $\lambda\neq 0$. By
replacing $z$ with $z/\sqrt[3]{\lambda}$, we can assume that $A$ has
eigenvalues $0, 1, -1$. By linear algebra, there exists an invertible
matrix $P$ such that
$$PAP^T=\left(   \begin{array}{ccc}
0 & 1 & 0\\
-1 &0 & 0\\
0 &0& 0
\end{array}
\right).$$
By setting $(x_1, x_2, x_3)^T=P^{-1}(x'_1, x'_2, x'_3)^T$, we get
$$\delta(z)=x_1\otimes x_2-x_1\otimes x_2.$$
This completes the proof.
\end{proof}

Here is the main result of this section. The proof is computation
and some details are easy to check.

\begin{theorem}
\label{zzthm3.5} Let $L$ be an anti-cocommutative CLA of dimension
four such that $\delta\neq 0$. Then there is a basis $\{x_1, x_2,
x_3, z\}$ such that the coalgebra structure is given by
$\delta(x_i)=0$ and $\delta(z)=x_1\otimes x_2-x_2\otimes x_1$. The
Lie algebra structure of $L$ is given by one of the following:
\begin{enumerate}
\item
$$\begin{aligned}
\; [x_2,x_1]&=x_2,\\
[x_3,x_1]&=[x_3, x_2]=0,\\
[z,x_1]&=z+ ax_1+cx_2,\\
[z, x_2]&= ax_2,\\
[z, x_3]&= bx_2.
\end{aligned}$$
where $(a, b)=(1,1), (1, 0), (0, 1)$ or $(0, 0)$ and $c\in k$.
\item
$L=B((a_{ij}))$ is determined by
$$\begin{aligned}
\; [x_2,x_1]&=[x_3,x_1]=[x_3, x_2]=0,\\
[z,x_1]&=a_{11}x_1+a_{12}x_2+a_{13}x_3,\\
[z, x_2]&= a_{21}x_1+a_{22}x_2+a_{23}x_3,\\
[z, x_3]&= a_{31}x_1+a_{32}x_2+a_{33}x_3.
\end{aligned}$$
where $(a_{ij})$ is a $3\times 3$ matrix over $k$. The CLA
$B((a_{ij}))$ is isomorphic to $B((b_{ij}))$ if and only if the
matrix $(a_{ij})$ is similar to $\lambda (b_{ij})$ for some
$\lambda\in k^\times$.

\item
$$\begin{aligned}
\;[x_2,x_1]&=0\\
[x_3,x_1]&=x_2\\
[x_3,x_2]&=0\\
[z,x_1]& =ax_1+bx_3\\
[z,x_2]& =x_2\\
[z,x_3]& =cx_1+(1-a)x_3.
\end{aligned}
$$
where $a, b, c\in k$.
\item
$$\begin{aligned}
\;[x_2,x_1]&=0\\
[x_3,x_1]&=x_2\\
[x_3,x_2]&=0\\
[z,x_1]& =ax_1+bx_3\\
[z,x_2]& =0\\
[z,x_3]& =cx_1-ax_3.
\end{aligned}
$$
where $a, b, c\in k$.
\item
$$\begin{aligned}
\; [x_2,x_1]&=[x_3, x_2]=0,\\
[x_3, x_1]&=x_1,\\
[z, x_1]&=ax_1,\\
[z, x_2]&=bx_1,\\
[z, x_3]&= -z+cx_1+ax_3
\end{aligned}$$
where $(a, b)=(1,1), (1, 0), (0, 1)$ or $(0, 0)$ and $c\in k$.
\item
$$\begin{aligned}
\; [x_2,x_1]&=0,\\
[x_3, x_1]&= x_1+x_2,\\
[x_3, x_2]&=x_2,\\
[z, x_1]&=[z, x_2]=0,\\
[z, x_3]&= -2z
\end{aligned}$$

\item
$$\begin{aligned}
\; [x_2,x_1]&=0,\\
[x_3, x_1]&= x_1,\\
[x_3, x_2]&=-x_2,\\
[z, x_1]&=ax_1+cx_2,\\
[z, x_2]&=bx_1,\\
[z, x_3]&= 0.
\end{aligned}$$
wwhere $(a, b)=(1,1), (1, 0), (0, 1)$ or $(0, 0)$ and $c\in k$.

\item
$L=H(\lambda,a)$ is determined by
$$\begin{aligned}
\; [x_2,x_1]&=0,\\
[x_3, x_1]&= x_1,\\
[x_3, x_2]&=\lambda x_2,\\
[z, x_1]&=ax_2,\\
[z, x_2]&=ax_1,\\
[z, x_3]&= (-1-\lambda)z.
\end{aligned}$$
where $\lambda\in k\setminus\{0, -1\}$ and $a\in \{0, 1\}$. The CLA
$H(\lambda,a)$ is isomorphic to $H(\lambda',a')$ if and only if
$a'=a$ and $\lambda'=\lambda$ or $\lambda'=\lambda^{-1}$.
\end{enumerate}
The CLAs listed above are pairwise non-isomorphic except for the
isomorphisms given in parts (b,h).
\end{theorem}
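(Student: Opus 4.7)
My plan is to work with the basis $\{x_1, x_2, x_3, z\}$ provided by Lemma \ref{zzlem3.4}, in which $\delta(x_i) = 0$ for $i = 1, 2, 3$ and $\delta(z) = x_1 \otimes x_2 - x_2 \otimes x_1$. The classification then proceeds in two layers: first classify the $3$-dimensional Lie subalgebra $I = \ker \delta$ of $L$ (which is a Lie subalgebra by Lemma \ref{zzlem3.3}) using the Bianchi classification recalled in Subsection \ref{zzsec3.2}; then, for each possibility for $I$, determine the brackets $[z, x_i]$ for $i = 1, 2, 3$ using the compatibility identity \eqref{E2.1.1} together with the Jacobi identity, and finally normalize the surviving parameters via changes of basis that preserve $\delta$.

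The key computational step is that applying \eqref{E2.1.1} to the pair $(a, b) = (z, x_i)$, using $\delta(x_i) = 0$, reduces it to
\begin{equation*}
\delta([z, x_i]) = [x_1, x_i] \otimes x_2 - [x_2, x_i] \otimes x_1 + x_1 \otimes [x_2, x_i] - x_2 \otimes [x_1, x_i].
\end{equation*}
Writing $[z, x_i] = \alpha_i z + w_i$ with $w_i \in I$, the left-hand side becomes $\alpha_i(x_1 \otimes x_2 - x_2 \otimes x_1)$, so comparing both sides in $L \otimes L$ extracts $\alpha_i$ explicitly from the structure constants of $I$ and imposes linear relations among the $[x_j, x_i]$. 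Applied to $(a, b) = (z, z)$, the identity \eqref{E2.1.1} reduces to $[\delta(z), \delta(z)] = 0$ in $U(L)^{\otimes 2}$, which upon expansion gives an additional constraint involving $[x_1, x_2]$ that feeds back into restricting $I$.

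Next I split into cases by the isomorphism type of $I$. When $I$ is abelian, the formula above forces $\alpha_i = 0$, so $\operatorname{ad}(z)|_I$ is an arbitrary endomorphism of $I$ determined up to similarity and an overall scalar (the scalar arising from rescaling $z$, which rescales $\delta(z)$ but can be absorbed by rescaling $x_1$ or $x_2$), yielding family (b). When $I$ has nontrivial bracket, the possibilities are the Heisenberg algebra with $[x_3, x_1] = x_2$ (forcing $\alpha_2 \in \{0, 1\}$ and giving families (c) and (d)), the one with $[x_2, x_1] = x_2$ and $x_3$ central (giving (a)), and the various solvable $I$ with $[x_3, x_1] = x_1$ together with a second nonzero bracket (giving families (e)--(h)). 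For each such $I$, substituting into \eqref{E2.1.1} and the Jacobi identity restricts $\operatorname{ad}(z)|_I$, and then normalizing by the subgroup of $\GL(L)$ that preserves both $\delta$ and the form of the $I$-brackets produces the listed normal forms.

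The main obstacle is the bookkeeping in the solvable cases (e)--(h), where $I$ has two independent structure constants and the normalization group shrinks dramatically: one must preserve both the Jordan form of $\operatorname{ad}(x_3)|_{kx_1 \oplus kx_2}$ and the line $\delta(L) \subset I \wedge I$, so for instance in case (h) the only residual symmetry on $(x_1, x_2)$ is the diagonal torus together with the swap $x_1 \leftrightarrow x_2$ (which exchanges $\lambda$ and $\lambda^{-1}$). Verifying the pairwise non-isomorphism statements at the end then requires tracking invariants such as the Lie-isomorphism class of $I$, the centralizer of $\delta(L)$ in $I$, the spectrum of $\operatorname{ad}(z)|_I$, and the $z$-coefficient vector $(\alpha_1, \alpha_2, \alpha_3)$ modulo this residual symmetry.
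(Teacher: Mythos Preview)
Your approach is essentially that of the paper, and your displayed formula for $\delta([z,x_i])$ is correct and is the computational heart of the argument. Two remarks.

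First, the step where you apply \eqref{E2.1.1} to $(z,z)$ yields nothing: since $[z,z]=0$ the left side vanishes, and on the right the four cross-terms cancel in pairs by antisymmetry of the bracket while $[\delta(z),\delta(z)]=0$ trivially. All of the constraints on $I$ already come from the three cases $(z,x_i)$.

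Second, and more substantively: you frame the case split as ``classify $I$ by Bianchi type, then position $\delta(L)$ inside $I\wedge I$''. The paper instead reads off from your own formula the sharper consequence that $J:=kx_1+kx_2$ is a Lie \emph{ideal} of $I$: comparing your displayed identity against $\alpha_i(x_1\otimes x_2-x_2\otimes x_1)$ forces $[x_1,x_i],[x_2,x_i]\in J$ for every $i$ (any $x_3$-component on the right would produce tensors such as $x_3\otimes x_2$ with no counterpart on the left). The paper then organizes the case analysis by (i) whether $J$ is abelian, and (ii) when $J$ is abelian, by the Jordan type of $\operatorname{ad}(x_3)|_J$. This is cleaner than a raw Bianchi split precisely because it sidesteps the difficulty you flag: a single Bianchi type of $I$ can admit several inequivalent placements of the marked plane $J$ (your cases (a) and (e) have isomorphic $I$ but different $J$), and the Jordan-form split records exactly this datum. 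In fact your sketched case list already follows the $J$-first pattern rather than the Bianchi pattern, so the adjustment is mostly to your narrative, not to your computations.
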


\begin{proof}
By the previous two lemmas, for any $4$-dimensional anti-cocommutative
CLA $L$ with non-zero $\delta$, we can choose a basis
$\{x_1, x_2, x_3, z\}$ such that $I=\ker\delta$ is spanned by
$\{x_1, x_2, x_3\}$ and $\delta(z)=x_1\otimes x_2-x_1\otimes x_2$.

Write
$$\begin{aligned}
\;[x_2,x_1]&=a_1x_1+b_1x_2+c_1x_3\\
[x_3,x_1]&=a_2x_1+b_2x_2+c_2x_3\\
[x_3,x_2]&=a_3x_1+b_3x_2+c_3x_3\\
[z,x_1]& =e_1 z+\phi_1=e_1 z+ f_1x_1+g_1x_2+h_1x_3\\
[z,x_2]& =e_2 z+\phi_2=e_2 z+ f_2x_1+g_2x_2+h_2x_3\\
[z,x_3]& =e_3 z+\phi_3=e_3 z+ f_3x_1+g_3x_2+h_3x_3.
\end{aligned}
$$
Applying \eqref{E2.1.1} to $(z,x_i)$ for $i=1,2,3$, one obtains that
$c_1=c_2=c_3=0$ and that $e_1=b_1$, $e_2=-a_1$, $e_3=-a_2-b_3$. Thus
\eqref{E2.1.1} holds if and only if the Lie bracket satisfies
$$\begin{aligned}
\;[x_2,x_1]&=a_1x_1+b_1x_2\\
[x_3,x_1]&=a_2x_1+b_2x_2\\
[x_3,x_2]&=a_3x_1+b_3x_2\\
[z,x_1]& =b_1 z+ f_1x_1+g_1x_2+h_1x_3\\
[z,x_2]& =-a_1 z+ f_2x_1+g_2x_2+h_2x_3\\
[z,x_3]& =(-a_2-b_3) z+ f_3x_1+g_3x_2+h_3x_3.
\end{aligned}
$$
In particular, $J:=kx_1+kx_2$ is a Lie subalgebra. So $J$
is either abelian or sovlable.

If $J$ is solvable, we may assume that $[x_2,x_1]=x_2$.
Since $I$ is a Lie subalgebra,
$$\begin{aligned}
a_3x_1+b_3x_2&=[x_3,x_2]=[x_3,[x_2,x_1]]\\
&=[[x_3,x_2],x_1]+[x_2,[x_3,x_1]]\\
&=[a_3x_1+b_3x_2,x_1]+[x_2,a_2x_1+b_2x_2]\\
&=b_3x_2+a_2 x_2.
\end{aligned}
$$
This implies that $a_2=a_3=0$.

If $b_2\neq 0$ or $b_3\neq 0$, after a base change, we have
$b_2=b_3=0$. The Jacobi identity for other elements implies that
$$\begin{aligned}
\;[x_2,x_1]&=x_2\\
[x_3,x_1]&=0\\
[x_3,x_2]&=0\\
[z,x_1]& =z+f_1x_1+g_1x_2+h_1x_3\\
[z,x_2]& =f_1x_2\\
[z,x_3]& =g_3x_2
\end{aligned}
$$
for $f_i,g_i,h_i\in k$. There are a few cases. First of all, we can
make $h_1=0$ by replacing $z$ with $z+h_1x_3$. If $f_1\neq 0$, we
can assume $f_1=1$ by replacing $z, x_2$ with $\frac{1}{f_1}z,
\frac{1}{f_1}x_2$, respectively. If $g_3\neq 0$, we can also
normalize it to be $1$ by replacing $x_3$ with $\frac{1}{g_3}x_3$.
All non-isomorphic Lie algebras are now listed in part (a).

If $J$ is abelian, $[x_2,x_1]=0$. If $I$ is abelian, then
$[z,x_i]=\phi_i$ defines a Lie algebra and for any $\phi_i\in I$,
$i=1,2,3$. This is part (b). Further classification can be made by
taking the Jordan form of the coefficient matrix of
$\{\phi_1,\phi_2,\phi_3\}$. This is a linear algebra classification
and, to save space, we will not list all possibilities.

For the rest of the proof we assume that $J$ is abelian and $I$ is
not abelian. Up to a change of basis $\{x_1,x_2\}$, we may assume
that
$$\begin{aligned}
\; [x_3,x_1]&=a x_1+ b x_2\\
[x_3,x_2]&= c x_2
\end{aligned}
$$
where
$$\begin{pmatrix} a& b\\0&c\end{pmatrix}
=\begin{pmatrix} 0& 1\\0&0\end{pmatrix}
{\text{  or  }}
\begin{pmatrix} 1& 0\\0&0\end{pmatrix}
{\text{  or  }}
\begin{pmatrix} 1& 1\\0&1\end{pmatrix}
{\text{  or  }}
\begin{pmatrix} 1& 0\\0&\lambda\end{pmatrix}
$$
where $\lambda \in k^\times$.

Recall that
$$\begin{aligned}
\; [z,x_1]& =\phi_1=f_1x_1+g_1x_2+h_1x_3\\
[z,x_2]& =\phi_2=f_2x_1+g_2x_2+h_2x_3\\
[z,x_3]& =(-a-c) z+ \phi_3=(-a-c)z+f_3x_1+g_3x_2+h_3x_3.
\end{aligned}
$$
The Jacobi identity implies that
$$\begin{aligned}
0&=[\phi_2,x_1]+[x_2,\phi_1]\\
(2a+c)\phi_1+b \phi_2&=[\phi_3,x_1]+[x_3,\phi_1]\\
(a+2c)\phi_2 &=[\phi_3,x_2]+[x_3,\phi_2].
\end{aligned}
$$
which completely determine the Lie algebra $L$.

If $\begin{pmatrix} a& b\\0&c\end{pmatrix} =\begin{pmatrix} 0&
1\\0&0\end{pmatrix}$, then we have
$$\begin{aligned}
\;[x_2,x_1]&=0\\
[x_3,x_1]&=x_2\\
[x_3,x_2]&=0\\
[z,x_1]& =f_1x_1+g_1x_2+h_1x_3\\
[z,x_2]& =(h_3+f_1)x_2\\
[z,x_3]& =f_3x_1+g_3x_2+h_3x_3.
\end{aligned}
$$
First, we can make $g_1=g_3=0$ by replacing $z$ with
$z+g_3x_1-g_1x_3$. If $h_3+f_1\neq 0$, we can make $h_3+f_1=1$ by
replacing $x_1, x_2, z$ with $\frac{1}{\sqrt{h_3+f_1}}x_1,
\frac{1}{\sqrt{h_3+f_1}}x_2, \frac{1}{h_3+f_1}z$. After recycling
the letters $a,b,c$, we obtain part (c). If $h_3+f_1=0$, this is
part (d) after recycling the letters $a,b,c$.

For the rest we use similar computations, so some details are
omitted.

If $\begin{pmatrix} a& b\\0&c\end{pmatrix}=
\begin{pmatrix} 1& 0\\0&0\end{pmatrix}$, then we have
$$\begin{aligned}
\;[x_2,x_1]&=0\\
[x_3,x_1]&=x_1\\
[x_3,x_2]&=0\\
[z,x_1]& =f_1x_1\\
[z,x_2]& =f_2x_1\\
[z,x_3]& =-z+f_3x_1+g_3x_2+f_1x_3.
\end{aligned}
$$
First,  $g_3$ can be made $0$ by replacing $z$ with $z-g_3x_2$. If
$f_1\neq 0$, we can assume $f_1=1$ by replacing $z, x_1$ with
$\frac{1}{f_1}z, \frac{1}{f_1}x_1$, respectively. If $f_2\neq 0$, we
can also normalize it to be $1$ by replacing $x_1, x_2$ with
$\sqrt{f_2}x_1, \frac{1}{\sqrt{f_2}}x_2$, respectively. All cases
are listed in part (e).

If $\begin{pmatrix} a& b\\0&c\end{pmatrix}=
\begin{pmatrix} 1& 1\\0&1\end{pmatrix}$, then we have
$$\begin{aligned}
\;[x_2,x_1]&=0\\
[x_3,x_1]&=x_1+x_2\\
[x_3,x_2]&=x_2\\
[z,x_1]& =f_1x_1+f_1x_2\\
[z,x_2]& =f_1x_2\\
[z,x_3]& =-2z+f_3x_1+g_3x_2+2f_1x_3.
\end{aligned}
$$
First, we can make $f_3=g_3=0$ by replacing $z$ with
$z-f_3x_1-(f_3+g_3)x_2$. Then we can assume that $f_1=0$ by
replacing $z$ with $z-f_1x_3$. This part (f).

Finally, if $\begin{pmatrix} a& b\\0&c\end{pmatrix}=
\begin{pmatrix} 1& 0\\0&\lambda\end{pmatrix}$ where $\lambda
\neq 0$, then we have
$$\begin{aligned}
\;[x_2,x_1]&=0\\
[x_3,x_1]&=x_1\\
[x_3,x_2]&=\lambda x_2\\
[z,x_1]& =f_1x_1+g_1x_2\\
[z,x_2]& =f_2x_1+g_2x_2\\
[z,x_3]& =(-1-\lambda)z+f_3x_1+g_3x_2+(1+\lambda)f_1x_3
\end{aligned}
$$
such that $(1+\lambda)g_1=(1+\lambda)f_2=0$ and $\lambda(1+\lambda)
f_1=(1+\lambda)g_2$.

Suppose $\lambda=-1$. Then we can make $g_2=f_3=g_3=0$ by replacing
$z$ with $z+f_3x_1-g_3x_2+g_2x_3$. Then, if $f_1\neq 0$, we can make
$f_1=1$ by replacing $x_2, z$ with $\frac{1}{f_1}x_2,
\frac{1}{f_1}z$. If if $f_2\neq 0$, we can make $f_2=1$ by replacing
$x_1, x_2$ with $\sqrt{f_2}x_1, \frac{1}{\sqrt{f_2}}x_2$ (Notice
that $f_1$ will not change). This is part (g).

Suppose $\lambda\neq -1$. We can also assume that $f_3=g_3=0$ by
replacing $z$ with $z-\frac{f_3}{\lambda}x_1-g_3x_2$. Now the last
three relations become
$$\begin{aligned}
\;[z,x_1]& =f_1x_1+g_1x_2\\
[z,x_2]& =g_1x_1+\lambda f_1x_2\\
[z,x_3]& =(-1-\lambda)z+(1+\lambda)f_1x_3
\end{aligned}
$$
Then by replacing $z$ with $z-f_1x_3$, we can assume that $f_1=0$.
This is part (h). We finish the proof.
\end{proof}

\section{Classification of Primitively-thin algebras of dimension at most 4}
\label{zzsec4}

Recall that a connected Hopf algebra $H$ is primitively-thin if
$p(H)=2$.

If $\GKdim H=2$, then $H=U({\mathfrak g})$ for a Lie algebra of
dimension 2, which follows from Lemmas \ref{zzlem1.3}(e) and 2.6(c).

\subsection{Primitively-thin Hopf algebras of GK-dimension 3}
\label{zzsec4.1} We recall the definition of two classes of Hopf
algebras from \cite{Zh2}, which will be used later in the
classification.

\begin{example}\label{zzex4.1}
Let $A$ be the algebra generated by elements $X, Y, Z$ satisfying
the following relations,
\begin{align*}
[X, Y]&=0,\\
[Z, X]&=\lambda_1 X+\alpha Y,\\
[Z, Y]&=\lambda_2 Y,
\end{align*}
where $\alpha=0$ if $\lambda_1\neq \lambda_2$ and $\alpha=0$ or $1$
if $\lambda_1= \lambda_2$. Then $A$ becomes a Hopf algebra via
\begin{align}
\eps(X)=0, & \quad \Delta(X)=1\otimes X+ X\otimes 1,\notag\\
\eps(Y)=0, & \quad \Delta(Y)=1\otimes Y+ Y\otimes 1,
\label{E4.1.1}\tag{E4.1.1}\\
\eps(Z)=0, & \quad \Delta(Z)=1\otimes Z+X\otimes Y-Y\otimes X+
Z\otimes 1.\notag
\end{align}
We denote this Hopf algebra by $A(\lambda_1, \lambda_2, \alpha)$. It
is easy to see that $A(\lambda_1, \lambda_2, \alpha)$ is the
enveloping algebra $U(a(\lambda_1, \lambda_2, \alpha)))$ where
$a(\lambda_1, \lambda_2, \alpha)$ is the CLA defined before Lemma
\ref{zzlem3.2}.
\end{example}

\begin{example}\label{zzex4.2}
Let $B$ be the algebra generated by elements $X, Y, Z$ satisfying
the following relations,
\begin{align*}
[X, Y]&=Y,\\
[Z, X]&=-Z+\lambda Y,\\
[Z, Y]&=0,
\end{align*}
where $\lambda \in k$. Then $B$ becomes a Hopf algebra via the
coalgebra structure given as in \eqref{E4.1.1}. We denote this Hopf
algebra by $B(\lambda)$. This algebra is the enveloping algebra
$U(b(\lambda))$ where $b(\lambda)$ is the CLA defined in Lemma
\ref{zzlem3.2}(e).
\end{example}

The following proposition is from \cite[Theorem 7.8]{Zh2}. It also follows
now from Lemma \ref{zzlem3.2} and Theorem \ref{zzthm2.7}.

\begin{proposition}
\label{zzpro4.3}
Let $H$ be a connected Hopf algebra. Then $H$ is primitively-thin of
GK-dimension $3$ if and only if $H$ is isomorphic to one of the
following:
\begin{enumerate}
\item
The Hopf algebras $A(0, 0, 0)$, $A(0, 0, 1)$, $A(1, 1, 1)$ or $A(1,
\lambda, 0)$ from Example \ref{zzex4.1} for some $\lambda\in k$;
\item The Hopf algebras $B(\lambda)$ from Example
\ref{zzex4.2} for some $\lambda\in k$.
\end{enumerate}
\end{proposition}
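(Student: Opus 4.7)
The plan is to derive Proposition \ref{zzpro4.3} as a direct combination of Theorem \ref{zzthm2.7} and the dimension-3 CLA classification in Lemmas \ref{zzlem3.1}--\ref{zzlem3.2}. The structural backbone is: primitively-thin plus $\GKdim H = 3$ places us in the regime $\GKdim H = p(H)+1$ of Theorem \ref{zzthm2.7}, so $H$ is forced to be the enveloping algebra of a 3-dimensional anti-cocommutative CLA.

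For the ``only if'' direction, I would start from the assumption $p(H) = 2$ and $\GKdim H = 3$. Since $\GKdim H = p(H)+1 < \infty$, Theorem \ref{zzthm2.7} yields $H \cong U(L)$ for the anti-cocommutative CLA $L = P_2(H)$, with $\dim L = p_2(H) = \GKdim H = 3$. In this CLA, $\ker \delta \supseteq P(H)$ has dimension exactly $p(H) = 2$, so $\delta \neq 0$. Lemma \ref{zzlem3.2} then exhausts the isomorphism classes of 3-dimensional anti-cocommutative CLAs with nontrivial $\delta$: these are precisely $a(0,0,0)$, $a(1,\lambda,0)$, $a(0,0,1)$, $a(1,1,1)$, and $b(\lambda)$. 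Passing to enveloping algebras under the identifications given in Examples \ref{zzex4.1} and \ref{zzex4.2} produces exactly the list stated in the proposition.

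For the ``if'' direction, I would inspect each Hopf algebra on the list directly. In every case the defining relations show $X, Y$ are primitive, and a PBW-style argument on ordered monomials in $X, Y, Z$ confirms $\GKdim H = 3$. To see that $p(H) = 2$ rather than $3$, note that $\delta_H(Z) = X \otimes Y - Y \otimes X \neq 0$, so $Z \notin P(H)$; since the monomials of degree $\geq 2$ in $X, Y$ are not primitive either (their $\delta$ is nonzero and symmetric), the primitive space is exactly $kX + kY$. Thus each member of the list is primitively-thin of GK-dimension 3.

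The only subtle step is the appeal to Theorem \ref{zzthm2.7}, which depends on knowing $P_2(H) \neq P(H)$; this is automatic here because $\GKdim H > p(H)$ forces $H \neq U(P(H))$, and Lemma \ref{zzlem2.6}(c) then supplies the required strict inclusion. Everything else is essentially bookkeeping once the classification of Lemma \ref{zzlem3.2} is in hand, so there is no genuine obstacle beyond verifying that the anti-cocommutative CLAs $a(\cdot,\cdot,\cdot)$ and $b(\lambda)$ match the Hopf algebras $A(\cdot,\cdot,\cdot)$ and $B(\lambda)$ of Examples \ref{zzex4.1}--\ref{zzex4.2}, which is immediate from Definition \ref{zzdef2.2}.
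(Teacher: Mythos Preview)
Your proposal is correct and follows exactly the route the paper itself indicates: the paper does not give a standalone proof but simply remarks that the proposition is \cite[Theorem 7.8]{Zh2} and ``also follows now from Lemma \ref{zzlem3.2} and Theorem \ref{zzthm2.7}.'' You have filled in precisely those details, including the observation that $\ker\delta = P(H)$ has dimension $2$ so that $\delta\neq 0$ and Lemma \ref{zzlem3.2} (rather than Lemma \ref{zzlem3.1}) applies.
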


\subsection{Examples of connected Hopf algebras of GK-dimension 4}
\label{zzsec4.2}
We introduce four classes of connected Hopf
algebras of GK-dimension $4$. Later we will show that these classes
give a complete description of primitively-thin Hopf algebras of
GK-dimension $4$.

\begin{example}\label{zzex4.4}
Let $D$ be the algebra generated by $X, Y, Z, W$ satisfying the
following relations,
\begin{align*}
[Y, X]&=[Z, X]=[Z, Y]=0,\\
[W, X]&=a_{11}X+a_{12}Y,\\
[W, Y]&=a_{21}X+a_{22}Y,\\
[W, Z]&=(a_{11}+a_{22})Z+\xi_1X+\xi_2Y,
\end{align*}
where $a_{ij}, \xi_i\in k$. Then $D$ becomes a bialgebra via
\begin{align*}
\eps(X)=0,&\quad \Delta(X)=1\otimes X+ X\otimes 1,\\
\eps(Y)=0,&\quad \Delta(Y)=1\otimes Y+ Y\otimes 1,\\
\eps(Z)=0,&\quad \Delta(Z)=1\otimes Z+X\otimes Y-Y\otimes X+ Z\otimes 1,\\
\eps(W)=0, & \\
\Delta(W)=1&\otimes W+W\otimes 1\\
&+\theta_1(Z\otimes X-X\otimes Z+X\otimes XY+XY\otimes
X)\\
&+\theta_2(Y\otimes Z-Z\otimes Y+XY\otimes Y+Y\otimes XY),
\end{align*}
where $\theta_i\in k$ and at least one of them is non-zero. This
bialgebra is also denoted by $D(\{\theta_i\}, \{a_{ij}\},\{\xi_i\})$
if we want to indicate the parameters.
It is easy to see that the coalgebra structure is connected. Hence
the bialgebra $D$ is automatically a Hopf algebra. Note that $P(D)=
kX+kY$ and $P_2(D)=kX+kY+kZ$. Let $f$ be a Hopf algebra isomorphism
between two such Hopf algebras (or a Hopf algebra automorphism).
Then $f$ preserves the subspaces $kX+kY$ and $kX+kY+kZ$, and it is
now not hard to see that $f$ sends
\begin{align}
X&\longrightarrow c_{11}X+c_{12}Y,\notag\\
Y&\longrightarrow c_{21}X+c_{22}Y,\label{E4.4.1}\tag{E4.4.1}\\
Z&\longrightarrow c_{31}X+c_{32}Y+c_{33}Z,\notag\\
W&\longrightarrow c_{44}W+w(X,Y,Z)\notag
\end{align}
 where $c_{ij}\in k$ and $w(X,Y,Z)$ is a certain
polynomial of $X,Y,Z$. (Note that $f$ being a Hopf algebra isomorphism
implies that $c_{ij}$ and $w$ satisfy some conditions which we will
not give details here).

Using an isomorphism $f$, or equivalently, choosing a new basis
$\{X,Y,Z,W\}$ properly, one can first normalize the matrix
$(a_{ij})_{2\times 2}$ so that it becomes one of the following five:
\begin{equation}
\label{E4.4.2}\tag{E4.4.2}
\begin{pmatrix} 0& 0\\0&0\end{pmatrix}
{\text{ , }}
\begin{pmatrix} 0& 1\\0&0\end{pmatrix}
{\text{ , }}
\begin{pmatrix} 1& 0\\0&0\end{pmatrix}
{\text{  ,  }}
\begin{pmatrix} 1& 1\\0&1\end{pmatrix}
{\text{  ,  }}
\begin{pmatrix} 1& 0\\0&\lambda\end{pmatrix}
\end{equation}
where $\lambda\neq 0$ and in the last class $\begin{pmatrix} 1& 0\\0&\lambda\end{pmatrix}$
is equivalent to $\begin{pmatrix} 1& 0\\0&\lambda^{-1}\end{pmatrix}$.
The Hopf algebras are pair-wise non-isomorphic if these are in different
classes. Within any class, two Hopf algebras
$D(\{\theta_i\},\{a_{ij}\},\{\xi_i\})$s can be isomorphic for different
parameters $\{\theta_i\}$ and $\{\xi_i\}$, which is determined by the
base changes that fixes the matrix given in \eqref{E4.4.2} (or change
$\begin{pmatrix} 1& 0\\0&\lambda\end{pmatrix}$ to
$\begin{pmatrix} 1& 0\\0&\lambda^{-1}\end{pmatrix}$ if in the last class).
For example, by replacing $\{X,Y,Z,W\}$ by $\{aX, aY, a^2Z, W\}$, the
parameters $\{\theta_1,\theta_2\}$ becomes $\{a^{-3}\theta_1,
a^{-3}\theta_2\}$. This means that we may assume that $\{\theta_1,\theta_2\}
\in {\mathbb P}^1$. Dependent on the form of the matrix $(a_{ij})$
listed in \eqref{E4.4.2}, one can further decide the parameters
$\{\xi_1,\xi_2\}$ such that the Hopf algebras $D(\{\theta_i\},
\{a_{ij}\},\{\xi_i\})$ are non-isomorphic. In summary,
the isomorphism classes of $D(\{\theta_i\}, \{a_{ij}\},\{\xi_i\})$
can be completely determined by easy linear algebra.

There is another way of classifying all isomorphism classes of
$D(\{\theta_i\}, \{a_{ij}\},\{\xi_i\})$. First, by choosing the
basis $\{X,Y,Z,W\}$ properly, one can assume that $\theta_1=0$ and
$\theta_2=1$. So we can fix $\{\theta_1,\theta_2\}=\{0,1\}$. In
particular,
$$\Delta(W)=1\otimes W+W\otimes 1+Y\otimes Z-Z\otimes Y+XY\otimes Y+Y\otimes
XY.$$
Under this restriction, one can further classify the
parameters $\{(a_{ij}), \{\xi_i\}\}$. Unfortunately, then we can not
assume that the matrix $(a_{ij})$ is of one of the form given in
\eqref{E4.4.2}.

From the algebraic relations, $D(\{\theta_i\},
\{a_{ij}\},\{\xi_i\})$ is isomorphic to a universal enveloping
algebra of a Lie algebra.
\end{example}

\begin{example}\label{zzex4.5}
 Let $E$ be the algebra generated by $X,
Y, Z, W$ satisfying the following relations,
\begin{align*}
[Y, X]&=[Z, Y]=0,\\
[Z, X]&=X,\\
[W, X]&=a X,\\
[W, Y]&=b X,\\
[W, Z]&=aZ-W+\xi X+\xi' Y.
\end{align*}
where $\xi, \xi'\in k$. Then $E$ becomes a bialgebra (and then a
Hopf algebra) via
\begin{align*}
\eps(X)=0,&\quad \Delta(X)=1\otimes X+ X\otimes 1,\\
\eps(Y)=0,&\quad \Delta(Y)=1\otimes Y+ Y\otimes 1,\\
\eps(Z)=0,&\quad \Delta(Z)=1\otimes Z+X\otimes Y-Y\otimes X+ Z\otimes 1,\\
\eps(W)=0,& \\
\Delta(W)=1&\otimes W+W\otimes 1\\
&\quad +Z\otimes X-X\otimes Z+X\otimes XY+XY\otimes X.
\end{align*}
Up to a base change (by setting $W_{new}=W-\xi' Y$), we may assume
that $\xi'=0$. We denote this Hopf algebra by $E(a,b,\xi)$ where
$a,b,\xi\in k$. Using a base change
$$\begin{aligned}
X&\to c X,\\
Y&\to c^{-1}Y,\\
Z&\to Z,\\
W&\to c W,
\end{aligned}
$$
for some $c\in k$ we can re-scale $(a,b,\xi)$. The complete set of non-isomorphic
classes of $E(a,b,\xi)$ is corresponding to the following cases
$$(a,b,\xi)=\begin{cases} 
(0,0,\xi) & {\text{if $a=b=0$}},\\
(0,1,\xi)& {\text{if $a=0$ and $b\neq 0$}},\\
(1,b,\xi)& {\text{if $a\neq 0$}}.\end{cases}
$$
To unify the presentation of $\Delta(W)$, we make a change of basis
$$\begin{aligned}
X& \to Y,\\
Y& \to  X,\\
Z& \to -Z,\\
W&\to W.
\end{aligned}
$$
Under the new basis, the coalgebra structure is same except for
$\Delta(W)$, which becomes
$$\Delta(W)=1\otimes W+W\otimes 1+Y\otimes Z-Z\otimes Y+XY\otimes Y+Y\otimes
XY.$$ The algebraic relations change accordingly, which can be
easily done.

From the algebraic relations, $E(a,b,\xi)$ is isomorphic to a
universal enveloping algebra of a Lie algebra.
\end{example}

\begin{example}\label{zzex4.6}
Let $F$ be the algebra generated by $X, Y, Z, W$ satisfying the
following relations,
\begin{align*}
[Y, X]&=[Z, Y]=0,\\
[Z, X]&=Y,\\
[W, X]&=\beta Y,\\
[W, Y]&=\gamma Y,\\
[W, Z]&=\gamma Z-\frac{2}{3}Y^3+\xi X+\xi' Y.
\end{align*}
where $\beta, \gamma, \xi, \xi'\in k$. Then $F$ becomes a bialgebra
(and then a Hopf algebra) via
\begin{align}
\eps(X)=0, &\quad \Delta(X)=1\otimes X+ X\otimes 1,\notag\\
\eps(Y)=0, &\quad \Delta(Y)=1\otimes Y+ Y\otimes 1,\notag\\
\eps(Z)=0, &\quad \Delta(Z)=1\otimes Z+X\otimes Y-Y\otimes X+ Z\otimes 1,
\label{E4.6.1}\tag{E4.6.1}\\
\eps(W)=0, &\quad \notag\\
\Delta(W)=1& \otimes W+W\otimes 1\notag\\
&\quad +Y\otimes Z-Z\otimes Y+XY\otimes Y+Y\otimes XY.\notag
\end{align}
If $W$ is replaced by $W_{new}:=W+\xi' X$, then we can assume
$\xi'=0$. We denote the Hopf algebra by $F(\beta, \gamma,\xi)$. One
can make further reduction by easy linear algebra. For example, if
$\gamma\neq 0$, by replacing $X$ by $X_{new}:=X-\gamma^{-1} \beta
Y$, we have $\beta=0$. By re-scalaring, we can further assume
$\gamma=1$. If $\gamma=0$, then, by re-scalaring, we might assume
$\beta=1$. In summary, $\{\beta,\gamma\}$ is either $\{0,1\}$ or
$\{1,0\}$. This completely determines the isomorphism classes of the
Hopf algebras $F(\beta, \gamma,\xi)$.

Let $W'=W-\frac{2}{3}XY^2$. Then the algebraic relations of $F$
becomes
\begin{align*}
[Y, X]&=[Z, Y]=0,\\
[Z, X]&=Y,\\
[W', X]&=\beta Y,\\
[W', Y]&=\gamma Y,\\
[W', Z]&=\gamma Z+\xi X.
\end{align*}
Therefore the subspace generated by $\{X,Y,Z,W'\}$ is a
4-dimensional Lie algebra, say $\mathfrak g$, and $F$ is isomorphic
to the enveloping algebra $U({\mathfrak g})$ as algebras.
\end{example}

\begin{example}\label{zzex4.7}
Let $K$ be the algebra generated by $X, Y, Z, W$ satisfying the
following relations,
\begin{align*}
[Y, X]&=[Z, Y]=0,\\
[Z, X]&=X,\\
[W, X]&=-Z,\\
[W, Y]&=0,\\
[W, Z]&=W-XY^2.
\end{align*}
The coalgebra structure of $K$ is given as in \eqref{E4.6.1}. Then
$K$ becomes a Hopf algebra. Let $W'=W-\frac{1}{2} XY^2$. Then the
algebraic relations become
\begin{align*}
[Y, X]&=[Z, Y]=0,\\
[Z, X]&=X,\\
[W', X]&=-Z,\\
[W', Y]&=0,\\
[W', Z]&=W'.
\end{align*}
Therefore the subspace generated by $\{X,Y,Z,W'\}$ is a
4-dimensional Lie algebra, say $\mathfrak g$, and $K$ is isomorphic
to the enveloping algebra $U({\mathfrak g})$ as algebras.
\end{example}

\begin{proposition}
\label{zzpro4.8} The algebras $H$ defined in Examples
\ref{zzex4.4}-\ref{zzex4.7} have the following properties.
\begin{enumerate}
\item
$H$ is an iterated Ore extension $k[X][Y;\delta_1][Z;\delta_2]
[W;\sigma_3,\delta_3]$.
\item
$H$ is an Auslander regular Cohen-Macaulay domain.
\item
The global dimension and GK-dimension of $H$ is 4.
\item
$H$ is a Hopf algebra and connected as a coalgebra.
\item
$P(H)=kX+kY$. The subalgebra generated by $X,Y$ is the enveloping
algebra $U(\mathfrak g)$ where ${\mathfrak g}$ is the Lie algebra
$P(H)$.
\item
$P_2(H)=kX+kY+kZ$. The subalgebra generated by $X,Y,Z$ is the enveloping
algebra $U(L)$ where $L$ is the CLA $P_2(H)$.
\item
$H$ is not isomorphic to an enveloping algebra of either a Lie algebra
or a CLA.
\item
The lantarn ${\mathfrak L}(H)$ of $H$ is isomorphic to the graded
Lie algebra of dimension four, with a basis $\{x^*,y^*,z^*,w^*\}$,
such that $z^*=[x^*,y^*]$ and $w^*=[z^*,y^*]$, subject to the
relation $[z^*,x^*]=0= [w^*,x^*]=[w^*,y^*]$.
\end{enumerate}
\end{proposition}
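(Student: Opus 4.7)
The plan is to work through the eight parts roughly in order, extracting the Ore extension structure first and then reading off the various invariants.

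For part (a), I would verify case by case that the defining relations of Examples \ref{zzex4.4}--\ref{zzex4.7} fit the shape $k[X][Y;\delta_1][Z;\delta_2][W;\sigma_3,\delta_3]$. In all four families $[Y,X]=0$, so $\delta_1=0$ and $k[X][Y;\delta_1]=k[X,Y]$; then the relations $[Z,X], [Z,Y]\in k[X,Y]$ give $\sigma_2=\id$ together with a well-defined derivation $\delta_2$. For the last extension, in Examples \ref{zzex4.4} and \ref{zzex4.6} the bracket $[W,Z]$ has no $W$-term and $\sigma_3=\id$ suffices, while in Examples \ref{zzex4.5} and \ref{zzex4.7} the presence of a $W$ on the right of $[W,Z]$ forces $\sigma_3(Z)=Z-c$ (with $\sigma_3$ fixing $X,Y$), which is a $k$-algebra automorphism of the sub-Ore extension $R=k[X,Y][Z;\delta_2]$. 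In each case, $\delta_3$ is determined on generators, and one has to check the $\sigma_3$-derivation identity $\delta_3(rs)=\sigma_3(r)\delta_3(s)+\delta_3(r)s$ against the (already non-commutative) relations of $R$; this is the Jacobi-style calculation I expect to be the main computational nuisance, but it is finite and proceeds by direct substitution. Parts (b) and (c) then follow from standard results on iterated Ore extensions over $k$: $H$ is an Auslander regular, noetherian, Cohen--Macaulay domain with $\gldim H=\GKdim H=4$.

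Part (d) is by construction. The bialgebra axioms come from the explicit comultiplications; connectedness follows inductively because $H_0=k$, $H_1=k\oplus kX\oplus kY$, and from $\Delta(Z)$ and $\Delta(W)$ one reads off $Z\in H_2$ and $W\in H_3$, so all algebra generators of $H$ lie in the coradical filtration, forcing $H=\bigcup H_n$; connected bialgebras are automatically Hopf algebras. For parts (e) and (f), one checks directly that $\delta_H(X)=\delta_H(Y)=0$, that $\delta_H(Z)=X\otimes Y-Y\otimes X$ is skew-symmetric and lies in $P(H)^{\otimes 2}$, and that $\delta_H(W)$ contains terms such as $Z\otimes X$ or $Y\otimes Z$ which are not in $(kX\oplus kY)^{\otimes 2}$ since $Z\notin P(H)$; thus $X,Y\in P(H)$, $Z\in P_2(H)\setminus P(H)$, $W\notin P_2(H)$. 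Combined with the PBW basis $\{X^iY^jZ^kW^\ell\}$ from (a), Lemma \ref{zzlem2.6}(d,e) identifies $\gr H\cong k[\bar X,\bar Y,\bar Z,\bar W]$ with degrees $1,1,2,3$, pinning down $\dim P(H)=2$ and $\dim P_2(H)=3$; hence $P(H)=kX+kY$ and $P_2(H)=kX+kY+kZ$. The subalgebra generated by $X,Y$ is cocommutative and equals $U({\mathfrak g})$ with ${\mathfrak g}=P(H)$; the subalgebra $H'$ generated by $X,Y,Z$ is a connected Hopf subalgebra with $p(H')=2$ and $\GKdim H'=3$, so Theorem \ref{zzthm2.7} identifies it as $U(L)$ with $L=P_2(H')=P_2(H)$ equipped with the induced CLA structure.

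Part (g) then follows from these invariants: if $H\cong U({\mathfrak g}')$ as a Hopf algebra, then $\dim{\mathfrak g}'=p(H)=2\neq 4=\GKdim H$; and if $H\cong U(L')$ as a Hopf algebra for some CLA $L'$, then by Proposition \ref{zzpro2.8}(a), $L'=P_2(H)$ has dimension $3$, but $\GKdim U(L')=\dim L'=3\neq 4$, each a contradiction. Part (h) is a direct application of Lemma \ref{zzlem1.4}(c): since $\GKdim H=4$ and $\dim{\mathfrak L}(H)_1=p(H)=2$ by Lemma \ref{zzlem1.3}(c), the lantern must be the unique graded Lie algebra with basis $\{a,b,[a,b],[[a,b],b]\}$ and relations $[[a,b],a]=0=[[[a,b],b],{\mathfrak L}]$; setting $a\leftrightarrow x^*$, $b\leftrightarrow y^*$, $[a,b]\leftrightarrow z^*$, $[[a,b],b]\leftrightarrow w^*$ yields exactly the presentation claimed in (h). The main obstacle I anticipate is the consistency check in part (a) for Examples \ref{zzex4.5} and \ref{zzex4.7}, where $\sigma_3\neq\id$ and one must verify the $\sigma_3$-derivation identity for $\delta_3$ against the non-commuting relations already present in $R$; with that in hand, everything else flows from (a) together with the $P$ and $P_2$ computations.
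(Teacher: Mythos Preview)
Your approach matches the paper's, which only gives a sketch (``follows from the definitions'', ``straightforward computation'', ``follows from parts (e,f)'', ``proved in Lemma \ref{zzlem1.4}(c)''); you have simply supplied the details the paper omits. Your Ore-extension analysis in (a) is correct, including the identification of $\sigma_3(Z)=Z\mp 1$ in Examples \ref{zzex4.5} and \ref{zzex4.7}.

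One small gap: in part (g), you invoke Proposition \ref{zzpro2.8}(a) to conclude $L'=P_2(H)$, but that proposition is stated only for \emph{anti-cocommutative} CLAs, whereas (g) must rule out arbitrary CLAs. The fix is short: if $H\cong U(L')$ as Hopf algebras then $L'$ is conilpotent with $\dim L'=\GKdim H=4$, so by Proposition \ref{zzpro2.9} it is quasi-equivalent to an anti-cocommutative CLA $L''$, and then Proposition \ref{zzpro2.8}(a) gives $L''=P_2(H)$, whence $4=\GKdim H=\dim L''=p_2(H)=3$, a contradiction. With this adjustment your argument for (g) is complete.
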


\begin{proof}[Sketch of the proof]
(a) Follows from the definitions.

(b,c) These facts are true for any iterated Ore extension.

(d,e,f) These dependents on straightforward, but not trivial,
computation.

(g) This follows from parts (e,f).

(h) This was proved in Lemma \ref{zzlem1.4}(c).

We can also check it directly here. Noting that
$\{x^*,y^*,z^*,w^*\}$ can be viewed as a dual basis of $\{X,Y,Z,W\}$
in $\gr H$ and that coproducts of $X,Y,Z,W$ given in Examples
\ref{zzex4.4}-\ref{zzex4.7} match up with the Lie structure of the
${\mathfrak L}(H)$ given in part (h).
\end{proof}

\subsection{Primitively-thin Hopf algebra of GK-dimension 4, Part I}
\label{zzsec4.3} In this and the next two subsections we classify
all primitively-thin Hopf algebra of GK-dimension 4.

Let $C$ be a primitively-thin Hopf algebra of GK-dimension 3. By
Proposition \ref{zzpro4.3}, $C$ is of type $A$ or $B$ as in Examples
\ref{zzex4.1} and \ref{zzex4.2}. Let $D$ be the Hopf algebra $A(0,
0, 0)$. Then it is easy to see that $D$ is a coradically graded Hopf
algebra by setting $\deg X=\deg Y=1$ and $\deg Z=2$. Since $\gr
C\cong D$, $C$ is a so-called PBW deformation of $D$.

Let $\{x, y, z\}$ be any set of generators of $C$ such that $x, y$
are primitive and $\Delta(z)=1\otimes z+x\otimes y-y\otimes
x+z\otimes 1$. Then $C$ has a basis consists of monomials of the
form
$$x^{i_1}y^{i_2}z^{i_3}.$$
Notice that $C^+$ is spanned by $x^{i_1}y^{i_2}z^{i_3}$ with at
least one $i_k$ non-zero. Let $\overline{x}, \overline{y},
\overline{z}$ be the homogeneous elements in $D=\gr C$ corresponding
to $x, y, z$, respectively. (In fact, an easy calculation shows that
$\overline{x}, \overline{y}, \overline{z}$ can be identified with
the canonical generators $X, Y, Z$ as in the definition of $A(0, 0,
0)$). Then $\gr C$ has a basis $\{\overline{x}^{i_1}
\overline{y}^{i_2} \overline{z}^{i_3}\}$. Now we have a $k$-space
isomorphism from $C$ to $\gr C$ by sending $x^{i_1}y^{i_2}z^{i_3}$
to $\overline{x}^{i_1} \overline{y}^{i_2} \overline{z}^{i_3}$.
Clearly this isomorphism maps $C^+$ onto $D^+$.  From now on we
identify $C$ with $D=\gr C$ as $k$-spaces by this isomorphism, and
we will abuse the notation by dropping the bars for the generators
$\overline{x}, \overline{y}, \overline{z}$ of $\gr C$. Define $\deg
x^{i_1}y^{i_2}z^{i_3}=i_1+i_2+2i_3$. This grading agrees with the
natural grading on $D=\gr C$. Moreover, by the defining relations of
$C$, it is easy to check that
$$\Delta_C(a)=\Delta_D(a)+ldt,$$
where $a\in C$ and $ldt$ denotes terms with degrees lower than $\deg
a$. As a consequence, we can think about $\Omega C$ and $\Omega D$,
the cobar constructions of $C$ and $D$, as the same graded
$k$-spaces with two differentials $\partial_C$ and $\partial_D$.
Moreover, $\partial_D$ respects the grading and
$$\partial^n_C(b)=\partial^n_D(b)+ldt,$$
where $b\in (C^+)^{\otimes n}$.

\begin{lemma}\label{zzlem4.9}
Let $D=A(0,0,0)$. Then $\dim_k \h^2(\Omega D)=2$ and $\h^2(\Omega
D)$ is spanned by the classes of 2-cocycles $(u)$ and $(t)$, where
\begin{equation}
\label{E4.9.1}\tag{E4.9.1} u=z\otimes x-x\otimes z+xy\otimes
x+x\otimes xy,
\end{equation}
\begin{equation}\label{E4.9.2}\tag{E4.9.2}
t=y\otimes z-z\otimes y+xy\otimes y+ y\otimes xy.
\end{equation}
\end{lemma}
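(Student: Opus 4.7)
The plan is to prove the statement in four stages. First, I would verify directly that $u$ and $t$ are $2$-cocycles. Using $\delta(X)=\delta(Y)=0$, $\delta(Z)=X\otimes Y-Y\otimes X$, and $\delta(XY)=X\otimes Y+Y\otimes X$ (the latter coming from $\Delta(XY)=\Delta(X)\Delta(Y)$), the cobar derivation on tensor-degree $2$ gives $\partial(a\otimes b)=\delta(a)\otimes b-a\otimes\delta(b)$, and a short computation yields $(\delta\otimes 1)(u)=2\,X\otimes Y\otimes X=(1\otimes\delta)(u)$, whence $\partial u=0$; the calculation for $t$ is entirely symmetric.

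Next I would exploit the internal grading on $D$ with $|X|=|Y|=1$ and $|Z|=2$. Because $\delta$ is homogeneous of degree $0$, $\partial$ preserves internal degree and $\Omega D=\bigoplus_n(\Omega D)_n$ splits as a direct sum of finite-dimensional subcomplexes. For $n\le 2$, $H^2_n=0$ by a direct count: in internal degree $2$ the $4$-dimensional space $(kX+kY)^{\otimes 2}$ is hit surjectively by the linearly independent coboundaries $\delta(X^2),\delta(XY),\delta(Y^2),\delta(Z)$, while the tensor-degree $3$ piece in this internal degree is zero.

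The main computation sits in internal degree $n=3$. Here $\dim D^+_3=6$ (basis $\{X^3,X^2Y,XY^2,Y^3,XZ,YZ\}$), $\dim(D^+\otimes D^+)_3=16$, and $\dim(D^+_1)^{\otimes 3}=8$. Since $P(D)=kX+kY$ lies in internal degree $1$, $\delta$ is injective on $D^+_3$, producing a $6$-dimensional space of $2$-coboundaries. A short argument exhibiting explicit preimages (e.g.\ $\partial(X^2\otimes X)=2\,X\otimes X\otimes X$; combinations like $\partial(XY\otimes X\pm Z\otimes X)$ hit $X\otimes Y\otimes X$ and $Y\otimes X\otimes X$, and so on) shows $\partial_2\colon(D^+\otimes D^+)_3\to(D^+_1)^{\otimes 3}$ is surjective, whence $\dim\ker\partial_2=8$ and $\dim H^2_3=8-6=2$. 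Linear independence of the classes $[u]$ and $[t]$ then follows from the two linear functionals $\phi(\omega)=[\omega:Z\otimes X]-[\omega:X\otimes Z]$ and $\psi(\omega)=[\omega:Y\otimes Z]-[\omega:Z\otimes Y]$, each of which vanishes on all six basis coboundaries (the terms $Z\otimes X$ and $X\otimes Z$ appear with equal coefficient in $\delta(XZ)$, and similarly for $\delta(YZ)$), while $(\phi(u),\psi(u))=(2,0)$ and $(\phi(t),\psi(t))=(0,2)$.

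The remaining task, and the main obstacle, is to show $H^2_n=0$ for $n\ge 4$. Rather than work through higher-degree linear algebra, the cleanest route is to identify $H^*(\Omega D)=\Cotor^*_D(k,k)$ with Lie algebra cohomology: the commutative Hopf algebra $D=A(0,0,0)$ is the graded dual of $U(\mathfrak h_3)$, where $\mathfrak h_3$ is the $3$-dimensional Heisenberg Lie algebra (the lantern of $D$, cf.\ Lemma~\ref{zzlem1.4}(b)). Hence $\Cotor^*_D(k,k)\cong\Ext^*_{U(\mathfrak h_3)}(k,k)\cong H^*(\mathfrak h_3,k)$, and the Chevalley--Eilenberg complex immediately yields $\dim H^2(\mathfrak h_3,k)=2$, concentrated in internal degree $3$ and spanned by the classes of $X^*\wedge Z^*$ and $Y^*\wedge Z^*$. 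This confirms $H^2_n=0$ for $n\ge 4$ and completes the proof.
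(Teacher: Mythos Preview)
Your proof is correct and, at its core, uses the same key step as the paper: the identification $H^*(\Omega D)\cong\Ext^*_{U(\mathfrak h_3)}(k,k)$ (equivalently, Lie algebra cohomology of the Heisenberg algebra) to obtain $\dim_k H^2(\Omega D)=2$. The paper simply invokes this duality first and then checks that $u$ and $t$ are nontrivial independent cocycles; your organization instead front-loads an explicit dimension count in internal degree $\le 3$ and only reaches for the duality to handle degree $\ge 4$. Since the duality already gives the global dimension, your careful degree-$3$ counting (the $16-8-6$ computation) is redundant, though harmless and instructive. For the linear independence of $[u]$ and $[t]$, the paper uses the $\mathbb Z^2$-bigrading $\deg_2 X=(1,0)$, $\deg_2 Y=(0,1)$, $\deg_2 Z=(1,1)$, under which $u$ and $t$ live in different bidegrees $(2,1)$ and $(1,2)$; this is a bit cleaner than constructing functionals $\phi,\psi$, but your argument is fine. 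One small quibble: your parenthetical reference to Lemma~\ref{zzlem1.4}(b) is slightly off, since that lemma concerns GK-dimension $4$; the relevant fact is simply that $D=A(0,0,0)$ is coradically graded of GK-dimension $3$, so its graded dual is $U(\mathfrak h_3)$ directly.
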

\begin{proof}
As mentioned above, $D$ is a graded Hopf algebra. Let $A$ be the
graded dual of $D$. Since $D$ is commutative, $A$ is cocommutative.
By Proposition \ref{zzpro1.1}, $A=U({\mathfrak L})$ for some graded
Lie algebra ${\mathfrak L}$. Since $D$ is coradically graded as a
coalgebra, $A$ is generated in degree one. This implies that
${\mathfrak L}$ is the 3-dimensional Heisenberg Lie algebra. Or
equivalently, $A$ is generated by two degree one elements $x_1$ and
$x_2$ with relations
$$x_1^2x_2+x_2x_1^2=2x_1x_2x_1, \,\,\,\,x_2^2x_1+x_1x_2^2=2x_2x_1x_2.$$
By \cite[Lemma 8.6 (c)]{LPWZ2}, $B^\#A\cong \Omega C$ as DG
algebras, where $B^\#A$ is the graded dual of the bar construction
of $A$. On the other hand, by \cite[Lemma 4.2]{LPWZ},
$\h^\bullet(B^\#A)\cong \Ext^\bullet_A(k_A, k_A)$. As a consequence,
$$\dim_k \h^2(\Omega D)=\dim_k \Ext^2_A(k_A, k_A)=2.$$

We introduce a ${\mathbb Z}^2$-grading on $D$ by setting $\deg_2
x=(1, 0)$, $\deg_2 y=(0, 1)$ and $\deg_2z=(1, 1)$. Then it is clear
that $D$ is a ${\mathbb Z}^2$-graded Hopf algebra and therefore the
differentials of $\Omega D$ preserves the ${\mathbb Z}^2$-grading. A
direct computation shows that both $u$ and $t$ are 2-cocycles. Now
if $u$ is 2-coboundary, it must be a linear combination of
$\partial^1(x^2y)=\delta(x^2y)=x^2\otimes y+2xy\otimes x+2x\otimes
xy+y\otimes x^2$ and $\partial^1(xz)=\delta(xz)=x\otimes z+z\otimes
x+x^2\otimes y+x\otimes xy$. An easy calculation shows this is
impossible. Hence the class $(u)$ is a non-zero element in
$\h^2(\Omega D)$. Similarly, one can show that the class $(t)$ is
also non-zero in $\h^2(\Omega D)$. Moreover, they are linearly
independent since they have different ${\mathbb Z}^2$-degrees. This
completes the proof.
\end{proof}

Recall that monomials of the form $x^i y^j z^k$ in $C$ are
identified with monomials $x^i y^j z^k$ in $D$.

\begin{proposition}
\label{zzpro4.10} Retain the above notation. Then $\h^2(\Omega C)$
is spanned by the classes of 2-cocycles $(u)$ and $(t)$, where $u$
and $t$ are given as in \eqref{E4.9.1}-\eqref{E4.9.2}, and $\dim_k
\h^2(\Omega C)=2$.
\end{proposition}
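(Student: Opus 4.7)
\emph{Plan.} The strategy is to lift the two cocycle classes of Lemma \ref{zzlem4.9} directly to $\Omega C$ and to bound $\h^2(\Omega C)$ from above by a filtered spectral sequence whose $E_1$-page is $\h^{*}(\Omega D,\partial_D)$.

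\textbf{Step 1.} Under the $k$-linear identification of $C$ with $D$ fixed before Lemma \ref{zzlem4.9}, I first claim that $u$ and $t$ are already $\partial_C$-cocycles. The only coalgebra data entering the verification $\partial_D(u)=0=\partial_D(t)$ are $\delta(x)=\delta(y)=0$, $\delta(z)=x\otimes y-y\otimes x$, and $\delta(xy)=x\otimes y+y\otimes x$. The first two identities hold in $C$ by construction, while the third follows from $\Delta_C(xy)=\Delta_C(x)\Delta_C(y)$ together with the primitivity of $x$ and $y$, without using commutativity of $C$. The same cancellations therefore give $\partial_C(u)=\partial_C(t)=0$ in $\Omega C$.

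\textbf{Step 2.} The coradical filtration $\{C_n\}$ induces a tensor filtration $F_n\Omega C$ on the cobar complex, and the relation $\partial_C^n=\partial_D^n+\mathrm{l.d.t.}$ from the paragraph preceding Lemma \ref{zzlem4.9} implies that $\partial_C$ preserves this filtration with $\gr(\Omega C,\partial_C)\cong(\Omega D,\partial_D)$. Each $F_n\Omega^m C$ is finite dimensional, so the associated spectral sequence converges, and Lemma \ref{zzlem4.9} yields $\dim\h^2(\Omega C)\le\dim\h^2(\Omega D)=2$.

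\textbf{Step 3.} It remains to verify that $(u)$ and $(t)$ are linearly independent in $\h^2(\Omega C)$. Suppose $au+bt=\partial_C(v)$ for some $v\in C^+$, and let $N$ be minimal with $v\in F_N$. Since $u$ and $t$ both have filtration degree $3$, if $N\ge 4$ then projection to $\gr_N$ gives $\partial_D(\bar v)=0$, forcing the nonzero class $\bar v$ to lie in $\h^1(\Omega D)=P(D)=kx+ky$, which is concentrated in degree $1$; this contradicts $N\ge 2$. Hence $v\in F_3$, and projecting to $\gr_3$ gives $au+bt=\partial_D(\bar v)$ in $\Omega D$, so $a=b=0$ by Lemma \ref{zzlem4.9}.

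Steps 1--3 together give $\h^2(\Omega C)=k(u)\oplus k(t)$ of dimension $2$. The main delicate point is Step 3: the perturbation $\partial_C-\partial_D$ drops filtration degree, so one must rule out \emph{a priori} that the equation $au+bt=\partial_C(v)$ could be witnessed by a $v$ of filtration degree strictly larger than $3$. The concentration of $\h^1(\Omega D)$ in degree $1$ is exactly what closes this loophole.
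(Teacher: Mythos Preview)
Your proof is correct and follows essentially the same strategy as the paper's: verify that $u,t$ are $\partial_C$-cocycles using only the coproduct formulas \eqref{E4.1.1} (which hold in $C$ as well as in $D$), bound $\dim\h^2(\Omega C)$ above by $2$ via the filtered spectral sequence with $E_1$-page $\h^*(\Omega D)$, and reduce the linear-independence question to $D$ by showing any hypothetical preimage has filtration degree at most $3$. The paper carries out the last step a bit differently---writing $\partial_C(c)=\partial_D(c)+v$ and arguing that the lower-degree piece $v$ is a $\partial_D$-cocycle of degree $<3$, hence a coboundary---whereas you project directly to $\gr_3$; these are two phrasings of the same observation. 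Two small cosmetic points: in Step~3 the phrase ``contradicts $N\ge 2$'' should read ``contradicts $N\ge 4$'' (or simply ``$N>1$''), and when $N<3$ your $\bar v$ in $\gr_3$ is zero, which still yields $au+bt=\partial_D(0)=0$ and hence $a=b=0$, so the argument goes through unchanged.
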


\begin{proof}
Let $w$ be a non-zero linear combination of $u$ and $t$. Then $w$ is
homogeneous of degree $3$ and $\partial_C^2(w)=\partial_D^2(w)=0$ by
direct calculation (which only uses \eqref{E4.1.1}). Suppose that
there exists $c\in C^+$ such that $w=\partial^1_{C}(c)$. Also, we
can write $\partial^1_{C}(c)=\partial^1_{D}(c)+v$, where $v\in
(C^+)^{\otimes 2}$ has degree less than the degree of $c$. If the
degree of $c$ is larger than $3$, then $\deg \partial^1_{D}(c)=\deg
c>3$ as $\partial^1_{D}$ is homogeneous and $\partial^1_{D}(c)\neq
0$. Then
$$\deg w=\deg \partial^1_{C}(c)=\deg \partial^1_{D}(c)>3,$$
a contradiction. Therefore $\deg c\leq 3$ and consequently, $v$ has
degree less than $3$.

Since $\partial_D^2\partial_C^1(c)=\partial_D^2(w)=0$, we have
$\partial_D^2(\partial^1_{D}(c)+v)=0$. Hence $\partial_D^2(v)=0$.
But $\deg v< 3$, so by Lemma \ref{zzlem4.9}, there exists $c'\in
C^+$ such that $v=\partial^1_D(c')$. As a consequence,
$w=\partial^1_{D}(c)+v=\partial^1_D(c+c')$, which is a
contradiction.

Now, we have shown that $\h^2(\Omega C)$ is at least of dimension
two and $(u), (t)$ are linearly independent in $\h^2(\Omega C)$. On
the other hand, by a standard spectral sequence argument \cite[Theorem 5.5.1]{We}, we have $\dim_k\h^2(\Omega
C)\le\dim_k\h^2(\Omega D)=2$. This
completes the proof.
\end{proof}

\begin{lemma}\label{zzlem4.11}
Let $H$ be a connected coalgebra and $K$ a proper subcoalgebra of
$H$. Let $N$ be the smallest number such that $K_N\subsetneq H_N$
and suppose that $N\ge2$, then $\delta$ induces a injective
$k$-linear map from $H^+_N/K^+_N$ to $\h^2(\Omega K)$.
\end{lemma}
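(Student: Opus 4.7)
The plan is to construct the map in the natural way, via $x \mapsto [\delta(x)]$, and then unwind what it means for a cocycle to be a coboundary.

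First I would check that the map is well-defined. For $x \in H_N$, the standard property $\Delta(H_N) \subset \sum_{i=0}^N H_i \otimes H_{N-i}$ together with the hypothesis $N \ge 2$ and the minimality of $N$ (which gives $H_i = K_i$ for all $i < N$) forces
\[
\delta(x) \in \sum_{i=1}^{N-1} H_i \otimes H_{N-i} = \sum_{i=1}^{N-1} K_i \otimes K_{N-i} \subset (K^+)^{\otimes 2}.
\]
Coassociativity of $\delta$ gives $(\delta\otimes 1)\delta(x) = (1\otimes\delta)\delta(x)$, which is precisely the cocycle condition in $\Omega K$. This yields a $k$-linear map $\phi: H_N^+ \to \h^2(\Omega K)$, $x \mapsto [\delta(x)]$. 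If $x$ already lies in $K_N^+$, then $\delta(x) = \partial(x)$ where $x$ is viewed as an element of $K^+$ in degree one of $\Omega K$, so $\phi(x) = 0$; thus $\phi$ descends to $\bar\phi: H_N^+/K_N^+ \to \h^2(\Omega K)$.

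For injectivity I would argue as follows. Suppose $\bar\phi([x]) = 0$, so there exists $c \in K^+$ with $\delta(x) = \partial(c) = \delta(c)$ in $(K^+)^{\otimes 2}$. Then $\delta(x-c) = 0$ in $H$, so $x - c \in P(H) \subset H_1 \subset H_{N-1}$, whence $c = x - (x-c) \in H_N$. Therefore $c$ lies in $K \cap H_N$, and the whole argument reduces to showing
\[
K \cap H_N = K_N,
\]
for then $c \in K_N^+$, $x-c \in H_1^+ = K_1^+ \subset K_N^+$, and so $x \in K_N^+$ as required.

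The main obstacle is establishing this equality of intersections. I would prove it by induction on $N$, using the inductive characterization of the coradical filtration for a connected coalgebra: $c \in K_n$ if and only if $\delta(c) \in K_{n-1}\otimes K_{n-1}$ (which follows from $K_n = K_{n-1}\wedge K_0$ together with the vector-space identity $V_1\otimes W \,\cap\, V\otimes W_1 = V_1\otimes W_1$ applied to $\delta(c)\in K^+\otimes K^+$). The base case $K \cap H_0 = k1 = K_0$ is automatic since $K$ is connected. For the inductive step, $\delta_K$ is the restriction of $\delta_H$, so if $c \in K\cap H_n$ then $\delta(c)\in K^{\otimes 2}\cap H_{n-1}^{\otimes 2} = (K\cap H_{n-1})^{\otimes 2} = K_{n-1}^{\otimes 2}$ by the inductive hypothesis, which places $c$ in $K_n$; the reverse inclusion is functoriality of the coradical filtration. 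With this identity in hand, all the pieces above fit together and the lemma follows.
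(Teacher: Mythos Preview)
Your proof is correct and follows the same strategy as the paper's: show $\delta$ sends $H_N^+$ to $2$-cocycles of $\Omega K$, kills $K_N^+$, and detect injectivity by observing that a coboundary relation $\delta(x)=\delta(c)$ with $c\in K^+$ forces $x-c\in P(H)\subset K_1^+$. The one place where you are more careful than the paper is the identification $K\cap H_N=K_N$: the paper simply writes ``$g\in K^+$, which contradicts the choice of $g$'' (where $g$ was chosen in $H_N^+\setminus K_N^+$), tacitly using this equality, whereas you supply an inductive proof of it via the characterization $c\in K_n\Leftrightarrow \delta(c)\in K_{n-1}^+\otimes K_{n-1}^+$ for connected $K$. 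That extra paragraph is a genuine clarification rather than a different method.
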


\begin{proof}
By the choice of $N$, we see that, for any $g\in H^+_N$,
$$\delta(g)=\Delta(g)-(1\otimes g+g\otimes 1)\in H^+_{N-1}\otimes H^+_{N-1}=
K^+_{N-1}\otimes K^+_{N-1}.$$ Hence
$\partial^2_K(\delta(g))=\partial^2_K\partial_H^1(g)=
\partial^2_H\partial_H^1(g)=0$,
which means that $\delta(g)$ is a 2-cocycle in the complex $\Omega
K$. Hence $\delta$ defines a $k$-linear map from $H^+_N\to
\h^2(\Omega K)$. For any element $g\in K^+_{N}$,
$\delta(g)=\partial_K^1(g)$ is a 2-boundary in the complex $\Omega
K$, whence it is zero in $\h^2(\Omega K)$. Thus $\delta$ induces a
$k$-linear map from $H^+_N/K^+_N\to \h^2(\Omega K)$.

If $g\in H^+_N\setminus K^+_N$, we claim that $\delta(g)$ represents
a non-zero cohomology class in $\h^2(\Omega K)$. If not, there is
$w\in K^+$ such that $\partial_K^1(w)=\delta(w)=\delta(g)$. As a
consequence, $\Delta(g-w)=1\otimes (g-w)+(g-w)\otimes 1$, i.e. $g-w$
is a primitive element in $H$. By the fact that $H^+_1=K^+_1$,
$g-w\in K^+_1$. But this would imply that $g\in K^+$, which
contradicts the choice of $g$. Therefore the map from $H^+_N/K^+_N$
to $\h^2(\Omega K)$ is injective.
\end{proof}

\begin{theorem}
\label{zzthm4.12}
Suppose that $H$ is a primitively-thin Hopf algebra of GK-dimension
$4$. Then for any linearly independent primitive elements $x ,y$,
there exists $z\in H$ such that
\begin{equation}\label{E4.12.1}\tag{E4.12.1}
\Delta(z)=1\otimes z+x\otimes y-y\otimes x+z\otimes 1.
\end{equation}
For any such $z$, the algebra $C$ generated by $\{x, y, z\}$ is a
Hopf subalgebra of GK-dimension $3$. Moreover, there exists $w\in H$
such that
\begin{equation}\label{E4.12.2}\tag{E4.12.2}\Delta(w)=1\otimes
w+\theta_1u+\theta_2t+w\otimes 1,\end{equation} where $u$ and $t$
are given as in \eqref{E4.9.1}-\eqref{E4.9.2} and one of the scalars
$\theta_1, \theta_2$ is non-zero. For any such $w$, the set $\{x, y,
z, w\}$ generates $H$.
\end{theorem}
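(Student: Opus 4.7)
The plan is to build the generators $z$ and $w$ one coradical-filtration layer at a time: $z$ from the anti-cocommutative part of $P_2(H)/P(H)$ using the lantern, and $w$ from the cobar cohomology $\h^2(\Omega C)$. Since $H$ is primitively-thin of GK-dimension $4$, Lemma \ref{zzlem1.3}(c) gives $\dim \mathfrak{L}(H)_1 = p(H) = 2$, so by Lemma \ref{zzlem1.4}(c) the lantern determines $\gr H \cong k[a_1, a_2, b, c]$ with degrees $1,1,2,3$. By Lemma \ref{zzlem2.6}(d,e), $\dim P_2(H)/P(H) = 1$, and any $z_0 \in P_2(H) \setminus P(H)$ has $\delta(z_0)$ a nonzero skew-symmetric element of $P(H)^{\otimes 2} = (kx + ky)^{\otimes 2}$, necessarily a scalar multiple of $x \otimes y - y \otimes x$; rescaling yields $z$ satisfying \eqref{E4.12.1}.

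Next, the subalgebra $C = \langle x, y, z \rangle$ is a subbialgebra of $H$ (its generators have coproducts in $C \otimes C$) and connected, hence a Hopf subalgebra by \cite[Lemma 5.2.1]{Mo}. The associated graded $\gr C$ embeds in $\gr H$ as the polynomial subalgebra $k[a_1, a_2, b]$ of GK-dimension $3$, so $\GKdim C = 3$, and $C$ is primitively-thin; therefore Proposition \ref{zzpro4.10} applies and $\h^2(\Omega C)$ is spanned by the classes of $u$ and $t$. The inclusion $\gr C \subset \gr H$ is an equality through degree $2$ but differs by dimension $1$ in degree $3$, whence $H_2 = C_2$ while $\dim H_3^+/C_3^+ = 1$. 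By Lemma \ref{zzlem4.11}, $\delta$ induces an injection $H_3^+/C_3^+ \hookrightarrow \h^2(\Omega C)$. Choosing $w' \in H_3^+ \setminus C_3^+$, its image is $\theta_1 (u) + \theta_2 (t)$ with $(\theta_1, \theta_2) \neq (0,0)$; unwinding gives $\delta(w') = \theta_1 u + \theta_2 t + \delta(c)$ for some $c \in C^+$, and then $w := w' - c$ satisfies \eqref{E4.12.2}.

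For generation, let $H' = \langle x, y, z, w \rangle$. It is a subbialgebra, and connected, hence a Hopf subalgebra. The element $w$ lies outside $C$---otherwise $\delta(w) = \theta_1 u + \theta_2 t$ would be a $2$-coboundary in $\Omega C$, contradicting nonzeroness in $\h^2(\Omega C)$---so $C \subsetneq H'$, and \cite[Lemma 7.4]{Zh2} gives $\GKdim H' \geq \GKdim C + 1 = 4$. Combined with $H' \subseteq H$, this forces $\GKdim H' = \GKdim H = 4$, and a second application of \cite[Lemma 7.4]{Zh2} yields $H' = H$. The main obstacle is the cobar-cohomology step: one must pin down that $C$ and $H$ first diverge at level $N = 3$ (the Hilbert-series computation from the lantern is what makes this precise), and the correcting element $c$ cleaning up the lower-order part of $\delta(w')$ must be chosen inside $C^+$ rather than in $H^+$---exactly the content of Lemma \ref{zzlem4.11}.
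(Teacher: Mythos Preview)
Your proof is correct and follows the same overall architecture as the paper's: build $z$ in filtration level $2$, identify $C$, use Lemma~\ref{zzlem4.11} and Proposition~\ref{zzpro4.10} to extract $w$, then close up with the GK-dimension argument from \cite{Zh2}. The one substantive difference is in how you produce $z$ and $C$. The paper invokes \cite[Proposition 7.10]{Zh2} as a black box to obtain a Hopf subalgebra $C$ of GK-dimension $3$, and then reads off $z$ from the classification in Proposition~\ref{zzpro4.3}. You instead compute the lantern directly via Lemma~\ref{zzlem1.4}(c), deduce the Hilbert series of $\gr H$ and hence $\dim P_2(H)/P(H)=1$ from Lemma~\ref{zzlem2.6}(e), and pull $z$ out of $P_2(H)$; then $C=\langle x,y,z\rangle$ is built from $z$ rather than the other way around. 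Your route is more self-contained and gives the bonus that $N=3$ is known from the outset (the paper only establishes this afterward, in Lemma~\ref{zzlem4.14}(c)), though the paper's argument shows that knowing the exact value of $N$ is not actually needed for the construction of $w$. Both approaches converge on the same cobar-cohomology step and the same endgame for generation.
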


\begin{proof}
By \cite[Proposition 7.10]{Zh2}, we can find a Hopf subalgebra $C$
of GK-dimension $3$. By Proposition \ref{zzpro4.3}, there is an
$z\in C$ such that $\Delta(z)$ is of the form \eqref{E4.12.1}. Since
$P(C)=kx\oplus ky=P(H)$, $C_1=H_1$.

Let $N\geq 2$ be the smallest integer such that $C_N\subsetneq H_N$.
By \cite[Lemma 5.3.2]{Mo}, there exists $w'\in H_N\setminus C_N$
such that $\Delta(w')=1\otimes w'+w'\otimes 1+f$ where $f\in
C_{N-1}\otimes C_{N-1}$. Without loss of generality, we assume that
$w'\in H^{+}$.

By Lemma \ref{zzlem4.11}, $f$ represents a non-zero cohomology class
in $\h^2(\Omega C)$. By Proposition \ref{zzpro4.10}, the cohomology
classes in $\h^2(\Omega C)$ represented by $f$ is a non-zero linear
combination of $(u)$ and $(t)$. Hence there exists $v\in C^+$ and
$\theta_1, \theta_2\in k$ such that
$f=\partial^1(v)+\theta_1u+\theta_2t$, where at least one of
$\theta_i$ is non-zero. Let $w=w'+v$. Then $w\notin C$ and
$\Delta(w)=1\otimes w+\theta_1u+\theta_2t+w\otimes 1$.

Next we have to show that $H$ is generated by $x$, $y$, $z$ and $w$.
Let $K$ be the subalgebra of $H$ generated by $x$, $y$, $z$ and $w$.
Then it is easy to check that $K$ is a sub-bialgebra and thus a Hopf
subalgebra of $H$. By the construction of $K$, $C\subsetneq K$. By
\cite[Lemma 6.8]{Zh2}, $\GKdim \gr K\ge \GKdim \gr C+1= 4$. On the
other hand, $\GKdim\gr K=\GKdim K\le \GKdim H=4$ since $K\subset H$.
Hence $\GKdim K=4$. Now it follows from \cite[Lemma 7.4]{Zh2} that
$K=H$. This completes the proof.
\end{proof}

As a direct consequence, we have the following proposition.

\begin{corollary}\label{zzcor4.13}
Let $H$ be a commutative, connected, primitively-thin Hopf algebra
of GK-dimension $4$. Then $H$ is isomorphic to $D(\{0,1\},
\{0\},\{0\})$.
\end{corollary}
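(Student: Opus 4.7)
The plan is to invoke Theorem \ref{zzthm4.12} to produce a preferred set of generators of $H$, use commutativity to eliminate all nontrivial algebraic relations, and then normalize the remaining cohomological parameters via the base-change discussion in Example \ref{zzex4.4}.

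First I would apply Theorem \ref{zzthm4.12} to obtain linearly independent primitives $x, y \in P(H)$, an element $z \in H$ with $\Delta(z) = 1 \otimes z + x \otimes y - y \otimes x + z \otimes 1$ generating together with $x, y$ a Hopf subalgebra $C$ of GK-dimension $3$, and an element $w \in H$ with $\Delta(w) = 1 \otimes w + w \otimes 1 + \theta_1 u + \theta_2 t$, where $u, t$ are the cocycles of Lemma \ref{zzlem4.9} and $(\theta_1, \theta_2) \neq (0,0)$. Moreover, the set $\{x, y, z, w\}$ generates $H$ as an algebra.

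Next, because $H$ is commutative, the commutators $[y,x]$, $[z,x]$, $[z,y]$, $[w,x]$, $[w,y]$, $[w,z]$ all vanish. Comparing with the defining relations of $D(\{\theta_i\}, \{a_{ij}\}, \{\xi_i\})$ in Example \ref{zzex4.4}, this forces $(a_{ij}) = 0$ and $(\xi_1, \xi_2) = 0$. Since the coproducts of $x, y, z, w$ then match those of the canonical generators $X, Y, Z, W$ of $D$ exactly, we conclude $H \cong D(\{\theta_1, \theta_2\}, \{0\}, \{0\})$ for some $(\theta_1, \theta_2) \neq (0, 0)$.

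Finally, I would apply the normalization remarked upon in Example \ref{zzex4.4}: by a suitable $\GL_2$-change of basis on $\{x, y\}$ (with induced adjustments to $z$), together with rescaling of $w$, the nonzero pair $(\theta_1, \theta_2)$ is brought to $(0, 1)$. All such base changes preserve commutativity because every commutator is already zero. This yields $H \cong D(\{0, 1\}, \{0\}, \{0\})$. The one point requiring care is this last step: one must verify that the admissible base changes act transitively, up to scalar rescaling of $w$, on the nonzero classes in $\h^2(\Omega C) \cong k(u) \oplus k(t)$, which reduces to a direct computation of how $u$ and $t$ transform under a $\GL_2$-action on $kx \oplus ky$.
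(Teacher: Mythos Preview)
Your argument is correct, with one small omission: when you conclude $H \cong D(\{\theta_1,\theta_2\},\{0\},\{0\})$, you have only exhibited a surjective Hopf algebra map $D \to H$ (matching generators, relations, and coproducts); you should say why it is injective, e.g.\ because both are domains of GK-dimension $4$, or by appeal to the PBW basis of Proposition~\ref{zzpro4.15}.

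The final normalization step is where your route genuinely diverges from the paper's. You invoke the base-change discussion in Example~\ref{zzex4.4} to bring $(\theta_1,\theta_2)$ to $(0,1)$ directly, and you correctly flag that this requires checking transitivity of the induced $\GL_2$-action on the span of $(u),(t)$ in $\h^2(\Omega C)$. The paper instead argues by duality: having identified $H$ with some $D(\{\theta_i\},\{0\},\{0\})$, it notes this Hopf algebra is graded (with $\deg X=\deg Y=1$, $\deg Z=2$, $\deg W=3$), so the graded dual $H^*$ is a graded cocommutative Hopf algebra, hence $U(\mathfrak{L})$ for a graded Lie algebra $\mathfrak{L}$ generated by two degree-one elements of dimension $4$; such an $\mathfrak{L}$ is unique up to isomorphism (Proposition~\ref{zzpro4.8}(h), or Lemma~\ref{zzlem1.4}(c)), whence $H\cong U(\mathfrak{L})^*\cong D(\{0,1\},\{0\},\{0\})$. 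Your approach is more hands-on and keeps everything on the $H$ side, at the cost of the cocycle computation you mention; the paper's approach trades that computation for the (already established) uniqueness of the lantern, and sidesteps any explicit analysis of how the $\theta_i$ transform.
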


\begin{proof}
By Theorem \ref{zzthm4.12}, there is a surjective Hopf map from
$D(\{\theta_i\}, \{0\},\{0\})$ to $H$ sending $X, Y, Z, W$ to $x, y,
z, w$, respectively, for some $\{\theta_1,\theta_2\}$. The map must
be an isomorphism since both $D$ and $H$ are domains of GK-dimension
$4$. By definition, $D(\{\theta_i\}, \{0\},\{0\})$ is a graded Hopf
algebra with $\deg X=\deg Y=1$, $\deg Z=2$ and $\deg W=3$. Hence the
graded dual $H^*$ is a graded commutative Hopf algebra, which must
be isomorphic to the enveloping algebra $U({\mathfrak L})$ for some
graded Lie algebra generated by two elements in degree 1. Such a Lie
algebra is unique (up to isomorphism) and is given in Proposition
\ref{zzpro4.8}(h). Therefore $H$ is isomorphic to $U({\mathfrak
L})^*$, which is isomorphic to $D(\{0,1\}, \{0\},\{0\})$.
\end{proof}

\begin{lemma}
\label{zzlem4.14}
Retain the notation in Theorem \ref{zzthm4.12} for parts (b,c).
\begin{enumerate}
\item
The Hopf algebra $D:=D(\{0,1\}, \{0\},\{0\})$ is coradically graded
by setting $\deg X=\deg Y=1$, $\deg Z=2$ and $\deg W=3$.
\item
For any connected Hopf algebra $H$ of GK-dimension $4$ with
$\dim_kP(H)=2$, $\gr H$ is isomorphic to $D(\{0,1\}, \{0\},\{0\})$.
\item
Working with $\gr C$ (isomorphic to $A(0, 0, 0)$) and $\gr H$, we
have $C_2=H_2$ and $H^+_3/C_3^+$ is one-dimensional, which is
spanned by the image of $w$.
\end{enumerate}
\end{lemma}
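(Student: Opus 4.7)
For part (a), I would first verify directly that the assignment $\deg X = \deg Y = 1$, $\deg Z = 2$, $\deg W = 3$ makes $D := D(\{0,1\},\{0\},\{0\})$ into a connected graded Hopf algebra: every defining commutator vanishes (since $(a_{ij})$ and the $\xi_i$ are zero), and each tensor summand in $\Delta(X), \Delta(Y), \Delta(Z), \Delta(W)$ carries the right total degree. Since $\Delta$ is graded, the grading filtration $F_n := \bigoplus_{i \leq n} D(i)$ always satisfies $F_n \subseteq D_n$. For the reverse, I would compute $P(D)$ degree by degree: writing $D$ as the polynomial ring $k[X,Y,Z,W]$ and applying $\delta$ to a general element of $D(n)$, one checks that $P(D) \cap D(n) = 0$ for all $n \geq 2$, hence $P(D) = D(1)$. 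An inductive argument using that $\delta$ is graded and that $D_n$ is itself a graded subspace then propagates this to $D_n \cap D(i) = 0$ for all $i > n$, yielding $D_n = F_n$.

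For part (b), the plan is simply to apply Corollary \ref{zzcor4.13} to $\gr H$. By \cite[Proposition 6.4]{Zh2} and \cite[Theorem 6.10]{Zh2}, $\gr H$ is a connected commutative Hopf algebra of GK-dimension $4$, and Lemma \ref{zzlem2.6}(d) gives $P(\gr H) \cong \gr H(1) \cong P(H)$, so $p(\gr H) = p(H) = 2$, i.e., $\gr H$ is primitively-thin. Corollary \ref{zzcor4.13} then yields $\gr H \cong D(\{0,1\},\{0\},\{0\})$.

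For part (c), the plan is a dimension count. From parts (a) and (b), $\dim \gr H(2) = 4$ (basis $\{X^2, XY, Y^2, Z\}$) and $\dim \gr H(3) = 7$ (adjoining $W$ to the six cubic monomials in $X, Y, Z$); the analogous computation with $\gr C \cong A(0,0,0)$ yields $\dim \gr C(2) = 4$ and $\dim \gr C(3) = 6$. The equality $P(C) = P(H)$ forces $C_1 = H_1$, and with $C \subseteq H$ one has $C_n \subseteq H_n$ for all $n$; comparing dimensions level by level then yields $C_2 = H_2$ and $\dim H_3^+ / C_3^+ = 1$. In particular, the integer $N$ in the proof of Theorem \ref{zzthm4.12} equals $3$, so the element $w$ produced there lies in $H_3 \setminus C_3$, and its image in the one-dimensional quotient $H_3^+ / C_3^+$ is nonzero and thus spans it.

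The main obstacle is the degree-by-degree verification in part (a) that $P(D) = D(1)$ and that $D$ is coradically graded; the computation is elementary but must be handled with care since $D$ is not generated in degree $1$ as an algebra. Parts (b) and (c) are essentially formal once (a) is in hand.
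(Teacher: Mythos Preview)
Your proposal is correct and follows essentially the same route as the paper. The paper's own proof is very terse---for (a) it simply says ``one can check directly that $\gr D = D$'', for (b) it invokes commutativity of $\gr H$ (via \cite[Theorem 1.2]{Zh2}) and primitively-thinness and then applies Corollary~\ref{zzcor4.13}, and for (c) it says ``replace $\gr C$ by $A(0,0,0)$ and $\gr H$ by $D$; then the assertion follows by an easy computation''---so your more explicit dimension count and your unpacking of (a) into ``graded Hopf algebra $+$ $P(D)=D(1)$ $+$ induction'' are exactly the computations the paper leaves to the reader.
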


\begin{proof} Let $D$ denote the Hopf algebra $D(\{0,1\},
\{0\},\{0\})$.

(a) One can check directly that $\gr D=D$. Hence $D$ is coradically
graded.

(b) By \cite[Theorem 1.2]{Zh2}, $\gr H$ is commutative, and it is
still connected and primitively-thin. The assertion follows from
Corollary \ref{zzcor4.13}.

(c) We may replace $\gr C$ by $A(0,0,0)$ and $\gr H$ by $D$. Then
the assertion follows by an easy computation.
\end{proof}

\begin{proposition}\label{zzpro4.15}
Retain the notation in Theorem $\ref{zzthm4.12}$. Then $H$ has a
$k$-basis of the form
$$\{x^{i_1}y^{i_2}z^{i_3}w^{i_4}\mid i_1, i_2, i_3, i_4\geq 0\}.$$
\end{proposition}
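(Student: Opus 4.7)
The plan is to transfer the PBW-type basis from $\gr H$ back to $H$ by using the coradical filtration as a spanning/independence mediator. By Lemma~\ref{zzlem4.14}(b) we know $\gr H\cong D:=D(\{0,1\},\{0\},\{0\})$, and by Proposition~\ref{zzpro4.8}(a) $D$ is an iterated Ore extension $k[X][Y;\delta_1][Z;\delta_2][W;\sigma_3,\delta_3]$, hence has the ordered monomials $\{X^{i_1}Y^{i_2}Z^{i_3}W^{i_4}\}$ as a $k$-basis. The task is to lift this PBW basis to $H$.

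First I would pin down the filtration degrees of the generators. The coproducts recorded in Theorem~\ref{zzthm4.12} show $x,y\in H_1$, $z\in H_2\setminus H_1$, and $w\in H_3\setminus H_2$ (the last inclusion uses Lemma~\ref{zzlem4.14}(c), which identifies $w$ with a generator of the one-dimensional quotient $H_3^+/C_3^+$). Under the isomorphism $\gr H\cong D$ from Lemma~\ref{zzlem4.14}(b), the principal symbols $\bar x,\bar y,\bar z,\bar w$ correspond (up to nonzero scalars, which we absorb) to the graded generators $X,Y,Z,W$ of $D$ in degrees $1,1,2,3$ respectively. In particular, the monomial $x^{i_1}y^{i_2}z^{i_3}w^{i_4}$ lies in $H_N$ for $N=i_1+i_2+2i_3+3i_4$, and its principal symbol in $\gr H(N)$ is $X^{i_1}Y^{i_2}Z^{i_3}W^{i_4}$.

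Next I would argue linear independence: suppose $\sum \alpha_{\mathbf i}\,x^{i_1}y^{i_2}z^{i_3}w^{i_4}=0$ in $H$ with not all $\alpha_{\mathbf i}=0$. Let $N$ be the maximum value of $i_1+i_2+2i_3+3i_4$ over indices with $\alpha_{\mathbf i}\neq 0$. Then in $\gr H(N)$ the sum of the corresponding symbols yields a nontrivial relation $\sum \alpha_{\mathbf i} X^{i_1}Y^{i_2}Z^{i_3}W^{i_4}=0$, contradicting that these are part of a PBW basis of $D$. For spanning, I would induct on the coradical filtration: $H_0=k$ is covered trivially; assuming $H_{N-1}$ is spanned by the monomials with $i_1+i_2+2i_3+3i_4\le N-1$, any $h\in H_N$ has a principal symbol $\bar h\in\gr H(N)$, which by the PBW basis of $D$ is a $k$-linear combination of symbols $X^{i_1}Y^{i_2}Z^{i_3}W^{i_4}$ with $i_1+i_2+2i_3+3i_4=N$. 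Subtracting the corresponding lifted monomials from $h$ moves us into $H_{N-1}$, which by induction is in the span.

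The step that requires genuine care is the identification of the principal symbols $\bar z,\bar w$ with the graded generators $Z,W$ of $D$ under the isomorphism of Lemma~\ref{zzlem4.14}(b); one must verify that the isomorphism can be chosen to send $Z\mapsto\bar z$ and $W\mapsto\bar w$ (rather than some other representatives of these filtration quotients), which boils down to checking that $\bar z$ and $\bar w$ satisfy the same comultiplication formulas in $\gr H$ as $Z$ and $W$ do in $D$. This is immediate from the formulas \eqref{E4.12.1} and \eqref{E4.12.2} once one notes that the trailing lower-filtration correction terms vanish in $\gr H$. With this identification in place the argument above goes through routinely.
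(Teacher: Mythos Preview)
Your proof is correct and follows the same strategy as the paper: establish the PBW basis in $\gr H$ and lift via the coradical filtration. The paper's version is terser---it simply observes that the principal symbols $X,Y,Z,W$ of $x,y,z,w$ generate $\gr H$ (by Theorem~\ref{zzthm4.12}) and satisfy the commutativity relations of $D$, so that the surjection $k[X,Y,Z,W]\to\gr H$ is an isomorphism by GK-dimension; your spanning/independence induction makes the implicit ``now the result follows'' explicit.

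One minor wrinkle in your last paragraph: the comultiplication of $\bar w$ in $\gr H$ still carries the parameters $\theta_1,\theta_2$ from \eqref{E4.12.2}, which need not equal $(0,1)$, so $\bar w$ does not literally satisfy the same coproduct formula as $W$ in $D(\{0,1\},\{0\},\{0\})$. This is harmless, however, since only the \emph{algebra} structure of $\gr H$ is needed to transfer the PBW basis, and that is just commutativity; the paper sidesteps the issue entirely by not attempting to match the Hopf isomorphism on the nose.
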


\begin{proof} Let $X, Y, Z, W$ be the elements in $\gr H$
corresponding to elements $x, y, z, w$ in $H$. Then $\{X, Y, Z, W\}$
generates $\gr H$ as an algebra by Theorem $\ref{zzthm4.12}$. By
Lemma \ref{zzlem4.14}(b), $\gr H\cong D$, so $X, Y, Z, W$ satisfy
the defining relations of $D(\{0,1\}, \{0\},\{0\})$ given in Example
\ref{zzex4.4}. As a consequence, $\gr H$ has a $k$-basis of the form
$$\{X^{i_1}Y^{i_2}Z^{i_3}W^{i_4}\mid i_1, i_2, i_3, i_4\geq 0\}.$$
Now the result follows.
\end{proof}

In Theorem \ref{zzthm4.12}, the Hopf subalgebra $C$ is
primitively-thin. Hence by Proposition \ref{zzpro4.3}, $C$ must be
isomorphic to either $A(\lambda_1, \lambda_2, \alpha)$ or
$B(\lambda)$.

\subsection{Primitively-thin Hopf algebra of GK-dimension 4, Part II}
\label{zzsec4.4} In this subsection we show that $B(\lambda)$ can
not appear as a Hopf subalgebra of a primitively-thin Hopf algebra
of GK-dimension 4. We start with an easy observation.

\begin{lemma}
\label{zzlem4.16} Let $x$ and $y$ be primitive elements. Then
\begin{equation}
\label{E4.16.1}\tag{E4.16.1} \delta(xy^2)=y^2\otimes x+x\otimes y^2
+2(xy\otimes y+y\otimes xy),
\end{equation}
\begin{equation}
\label{E4.16.2}\tag{E4.16.2} \delta(x^2y)=y\otimes x^2+x^2\otimes y
+2(xy\otimes x+x\otimes xy),
\end{equation}
\begin{equation}
\label{E4.16.3}\tag{E4.16.3} \delta(y^3)=3(y\otimes y^2+y^2\otimes
y).
\end{equation}
\end{lemma}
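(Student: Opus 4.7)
The plan is straightforward: use the fact that $\Delta$ is an algebra homomorphism, so $\Delta(xy^2) = \Delta(x)\Delta(y)^2$, $\Delta(x^2y) = \Delta(x)^2\Delta(y)$, and $\Delta(y^3) = \Delta(y)^3$, then subtract $h\otimes 1 + 1\otimes h$ to extract $\delta(h)$. Since $x$ and $y$ are primitive, $\Delta(x) = x\otimes 1 + 1\otimes x$ and $\Delta(y) = y\otimes 1 + 1\otimes y$, so each computation reduces to expanding a product of at most three binomials in $H\otimes H$.

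First I would compute
\[
\Delta(y)^2 = (y\otimes 1 + 1\otimes y)^2 = y^2\otimes 1 + 2(y\otimes y) + 1\otimes y^2,
\]
and multiply by $\Delta(x) = x\otimes 1 + 1\otimes x$; after cancelling the terms $xy^2\otimes 1$ and $1\otimes xy^2$, what remains is exactly \eqref{E4.16.1}. The computation of $\Delta(x^2y)$ is symmetric (swap the roles), giving \eqref{E4.16.2}. For \eqref{E4.16.3}, cube $\Delta(y)$ by the binomial theorem (which applies because $y\otimes 1$ and $1\otimes y$ commute in $H\otimes H$) to get
\[
\Delta(y)^3 = y^3\otimes 1 + 3(y^2\otimes y) + 3(y\otimes y^2) + 1\otimes y^3,
\]
and subtract $y^3\otimes 1 + 1\otimes y^3$.

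There is no real obstacle here; the only thing to watch is the commutativity of the tensor factors coming from primitive elements (i.e., $(x\otimes 1)(1\otimes y) = x\otimes y = (1\otimes y)(x\otimes 1)$), which makes the binomial-style expansion valid even though $H$ itself is noncommutative. The lemma is an elementary ingredient that will be used in the sequel to identify the cohomology classes represented by $u$ and $t$ in $\Omega D$.
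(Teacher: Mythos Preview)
Your proposal is correct. The paper does not supply a proof of this lemma at all---it is introduced as ``an easy observation''---so your direct expansion of $\Delta(x)\Delta(y)^2$, $\Delta(x)^2\Delta(y)$, and $\Delta(y)^3$ using the multiplicativity of $\Delta$ and the commutation $(a\otimes 1)(1\otimes b)=(1\otimes b)(a\otimes 1)$ in $H\otimes H$ is exactly what the authors have in mind.
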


\begin{proposition}\label{zzpro4.17}
Retain the notation in Theorem \ref{zzthm4.12}. Then the Hopf
subalgebra $C$ can not be isomorphic to $B(\lambda)$.
\end{proposition}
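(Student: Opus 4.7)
Suppose for contradiction that $C\cong B(\lambda)$, so the relations of $C$ read $[x,y]=y$, $[z,x]=-z+\lambda y$, $[z,y]=0$. By Theorem~\ref{zzthm4.12} there is $w\in H$ with $\Delta(w)=1\otimes w+\theta_1 u+\theta_2 t+w\otimes 1$, not both $\theta_i$ zero. My first task is to evaluate the derivations $[1\otimes x+x\otimes 1,\cdot]$ and $[1\otimes y+y\otimes 1,\cdot]$ on $u$ and $t$ inside $C\otimes C$; a direct expansion using the $B(\lambda)$ relations together with Lemma~\ref{zzlem4.16} gives
\begin{align*}
[1\otimes y+y\otimes 1,u]&=t-\delta(xy^2), & [1\otimes y+y\otimes 1,t]&=-\tfrac{1}{3}\delta(y^3),\\
[1\otimes x+x\otimes 1,u]&=u+\lambda\,\delta(z), & [1\otimes x+x\otimes 1,t]&=2t.
\end{align*}
From the first line, $\delta\bigl([y,w]+\theta_1 xy^2+\tfrac{\theta_2}{3}y^3\bigr)=\theta_1 t$; the bracketed element lies in $H_3$, so by Lemma~\ref{zzlem4.14}(c) it equals $cw+e$ for some $c\in k$ and $e\in C_3$. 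Applying $\delta$ yields $c(\theta_1 u+\theta_2 t)+\delta(e)=\theta_1 t$, and since $\delta(e)$ is a coboundary while $[u],[t]$ are linearly independent in $\h^2(\Omega C)$ (Proposition~\ref{zzpro4.10}), one gets $c\theta_1=0$ and $c\theta_2=\theta_1$. This forces $\theta_1=0$; normalize $\theta_2=1$.

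Using $[1\otimes x+x\otimes 1,t]=2t$ one obtains $\delta([x,w])=2\delta(w)$, so $[x,w]-2w\in P(H)$; absorbing the primitive into $w$ (which preserves $\delta(w)=t$) I arrange $[x,w]=2w$, and a second adjustment gives $[y,w]=-\tfrac{1}{3}y^3$. Setting $\xi:=[z,w]$, commutativity of $\gr H$ (Corollary~\ref{zzcor4.13}) places $\xi\in H_4$, and Jacobi on $(x,z,w)$ and $(y,z,w)$ produces $[x,\xi]=3\xi+\tfrac{\lambda}{3}y^3$ and $[y,\xi]=0$. A direct enumeration of monomials $x^ay^bz^cw^d$ in the PBW basis of Proposition~\ref{zzpro4.15} with filtration $a+b+2c+3d\le 4$ and $[x,-]$-generalized eigenvalue $b+c+2d=3$ shows that the generalized $3$-eigenspace of $[x,-]$ inside $H_4$ is spanned by $\{y^3,\,xy^3,\,y^2z,\,yw\}$; combining this with $[y,\xi]=0$ (and, for $\lambda\ne 0$, the Jordan structure forced by the $-\lambda y^3$ that $[x,-]-3$ produces on $y^2z$) one arrives at
\[
\xi=a_0\, y^3-\tfrac{a_3}{3}\,xy^3-\tfrac{1}{3}\,y^2z+a_3\,yw \qquad \text{for some } a_0,a_3\in k.
\]

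The contradiction now comes from computing $\delta(\xi)$ two ways and examining only the $w$-containing terms. On the one hand, $\delta([z,w])=[\Delta z,\Delta w]-1\otimes\xi-\xi\otimes 1$ produces the contributions $[x,w]\otimes y-y\otimes[x,w]=2(w\otimes y-y\otimes w)$ as its sole $w$-terms, since $\Delta z$ and $t$ involve no $w$ and $[y,w]=-\tfrac{1}{3}y^3$ is also $w$-free; the result is \emph{skew-symmetric} in the $w$--$y$ slots. On the other hand, of the four basis monomials appearing in $\xi$, three ($y^3,xy^3,y^2z$) lie in $C$ and so their $\delta$'s contain no $w$, while $\Delta(yw)=\Delta(y)\Delta(w)$ gives the $w$-terms of $\delta(yw)$ as $w\otimes y+y\otimes w$, whence $a_3\,\delta(yw)$ contributes $a_3(w\otimes y+y\otimes w)$—\emph{symmetric} in the same slots. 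Matching the $w\otimes y$ coefficient demands $a_3=2$ while matching $y\otimes w$ demands $a_3=-2$, which is impossible. The main obstacle is the eigenspace bookkeeping isolating $yw$ as the only monomial able to produce $w\otimes y$ or $y\otimes w$ in $\delta(\xi)$; once that is secured, the symmetric-versus-skew-symmetric clash finishes the proof independently of $\lambda$.
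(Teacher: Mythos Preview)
Your argument is correct and follows the same overall strategy as the paper: first eliminate $\theta_1\ne 0$ by producing a second class in $H_3^+/C_3^+$ (contradicting Lemma~\ref{zzlem4.14}(c)), then for $\theta_2=1$ compute $\delta([z,w])$ and extract a symmetric-versus-skew-symmetric clash in the $w\otimes y$ and $y\otimes w$ coefficients. The paper executes the final step by passing to $\gr H$ and using the auxiliary $\mathbb{Z}^2$-grading on $D$ to pin down the monomials that can occur in the image $V$ of $[w,z]$; you instead stay in $H$ and use the generalized $\operatorname{ad}(x)$-eigenspace decomposition on $H_4$ together with the Jacobi constraints $[x,\xi]=3\xi+\tfrac{\lambda}{3}y^3$, $[y,\xi]=0$. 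These are equivalent bookkeeping devices (the second $\mathbb{Z}^2$-component in the paper is precisely the $\operatorname{ad}(x)$-weight), and both isolate $yw$ (resp.\ $YW$) as the only source of $w$-terms in $\delta(\xi)$.

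Two small imprecisions, neither fatal: (i) once you fix $[x,w]=2w$ there is no further primitive adjustment available (no nonzero primitive is a $2$-eigenvector of $\operatorname{ad}(x)$), so your ``second adjustment'' cannot literally be made; however, applying Jacobi to $(x,y,w)$ shows the primitive error in $[y,w]$ has no $x$-component, and then the \emph{unique} first adjustment already yields $[y,w]=-\tfrac{1}{3}y^3$, so the conclusion stands. (ii) The coefficient $-\tfrac{1}{3}$ on $y^2z$ in your displayed formula for $\xi$ is forced only when $\lambda\ne 0$; for $\lambda=0$ that coefficient is undetermined, but this is harmless since your contradiction uses only the $yw$-coefficient $a_3$.
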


\begin{proof}
Suppose to the contrary that $C$ is isomorphic to $B(\lambda)$ for
some $\lambda\in k$. Therefore we can assume that $x, y, z\in C$
satisfies the relations listed in Example \ref{zzex4.2}.

First we assume that $\theta_1$ is not zero. By dividing $w$ with
$\theta_1$ we may assume that $\Delta(w)=1\otimes
w+u+\theta_2t+w\otimes 1$. Using \eqref{E4.12.2}, \eqref{E4.16.1},
\eqref{E4.16.3} and the reltaions of $B(\lambda)$, we have
\begin{align*}
\delta([w, y])&=\Delta([w,y])-[w,y]\otimes 1-1\otimes [w,y]\\
&=[\Delta(w),\Delta(y)]-[w,y]\otimes 1-1\otimes [w,y]\\
&=z\otimes [x, y]-[x, y]\otimes z\\
&\quad +[xy, y]\otimes x+ x \otimes [xy, y]+xy\otimes [x, y]+[x, y]\otimes xy\\
&\quad +\theta_2([xy, y]\otimes y+y\otimes[xy, y])\\
&=-t+\delta(xy^2)+\frac{\theta_2}{3}\delta(y^3).
\end{align*}
Let $w'=xy^2+\frac{\theta_2}{3}y^3-[w, y]$, then $\delta(w')=t$,
and, whence,
$$\Delta(w')=1\otimes w'+t+w'\otimes 1.$$
Now, under the map given in Lemma \ref{zzlem4.11}, the elements $w$
and $w'$ are mapped to $(u)+\theta_2(t)$ and $(t)$, respectively.
Hence $\dim_k H^+_3/C_3^+=2$, which contradicts Lemma
\ref{zzlem4.14}(c).

Next we assume that $\theta_1=0$ and $\theta_2\neq0$. By dividing
$w$ with $\theta_2$ we may assume that $\Delta(w)=1\otimes
w+t+w\otimes 1$. A similar calculation shows that $[w, x]+2w\in
P(H)$ and $[w, y]-\frac{1}{3}y^3\in P(H)$. As a consequence,
\begin{align}\label{E4.17.1}\tag{E4.17.1}
\delta([w, z])&=[w, x]\otimes y-y\otimes [w, x]+ x\otimes [w, y]-[w, y]\otimes x\\
\nonumber &\quad +[y, z]\otimes z-z\otimes [y, z]+[xy, z]\otimes y+y\otimes [xy, z]\\
\nonumber &\quad +[t, x\otimes y-y\otimes x]\\
 \nonumber&=-2w\otimes y+2y\otimes w+\sum_s\alpha_sf_s\otimes g_s
 \end{align}
where $\alpha_s\in k$, $f_s, g_s$ are monomials of the form
$x^{i_1}y^{i_2}z^{i_3}$. Let $(\gr H)^n$ denote the degree $n$ piece
of the graded Hopf algebra $\gr H$. Since $\gr H$ is commutative,
$[w, z]$ represents an element $V\in \gr H(4)$. Let $X\in \gr
H(1)$, $Y\in \gr H(1)$, $Z\in \gr H(2)$ and $W\in \gr H(3)$ be
the homogeneous elements in $\gr H$ corresponding to elements $x, y,
z$ and $w$ in $H$, respectively. By \eqref{E4.17.1}, we see that
\begin{equation}\label{E4.17.2}\tag{E4.17.2}
\delta(V)=-2W\otimes Y+2Y\otimes W+\sum_s\alpha'_sf'_s\otimes g'_s,
\end{equation}
where $\alpha'_s\in k$, $f'_s, g'_s$ are monomials of the form
$X^{i_1}Y^{i_2}Z^{i_3}$.

Now by Theorem \ref{zzthm4.12}, the set $\{X, Y, Z, W\}$ generates
$\gr H$. Also, $\gr H$ is a ${\mathbb Z}^2$-graded coalgebra by
setting $\deg_2X=(1, 0)$, $\deg_2Y=(0, 1)$, $\deg _2Z=(1,1)$ and
$\deg_2 W= (1, 2)$. Notice that $\deg_2 V=(1, 3)$. Therefore, $V$
must be a linear combination of linearly independent elements
$XY^3$, $Y^2Z$, $YW$ of degree $(1,3)$. Hence there are $\beta_i\in
k$ such that
$$V=\beta_1 XY^3+\beta_2 ZY^2+\beta_3 YW$$
or
\begin{align}\label{E4.17.3}\tag{E4.17.3}
\delta
(V)&=\delta(\beta_1XY^3+\beta_2ZY^2+\beta_3YW)\\
\notag &=\beta_3(Y\otimes W+W\otimes Y)+\sum_s\alpha''_sf''_s\otimes
g''_s,
\end{align}
where $\alpha''_s\in k$, $f''_s, g''_s$ are monomials of the form
$X^{i_1}Y^{i_2}Z^{i_3}$. If we compare the coefficient of $W\otimes
Y$ on \eqref{E4.17.2} and \eqref{E4.17.3}, we get $-2=\beta_3$. On
the other hand, if we compare the coefficient of $Y\otimes W$, we
have $2=\beta_3$, which is a contradiction. This completes the
proof.
\end{proof}

\subsection{Primitively-thin Hopf algebra of GK-dimension 4, Part III}
\label{zzsec4.5} In this subsection we deal with the case when
$C=A(\lambda_1,\lambda_2,\alpha)$ and finish the analysis.
Throughout this subsection we assume that
$C=A(\lambda_1,\lambda_2,\alpha)$ where the relations of
$A(\lambda_1,\lambda_2,\alpha)$ are given in Example \ref{zzex4.1};
and that $(\lambda_1,\lambda_2,\alpha)$ is either $(0,0,0)$, or
$(0,0,1)$ or $(1,1,1)$ or $(1,\lambda,0)$ as listed in Proposition
\ref{zzpro4.3}(a).

\begin{lemma}\label{zzlem4.18} Let $u$ and $t$ be given as in
\eqref{E4.9.1}-\eqref{E4.9.2}.
\begin{align}
\label{E4.18.1}\tag{E4.18.1} [u,x\otimes 1&+1\otimes x]
=\alpha(y\otimes x-x\otimes y),\\
\label{E4.18.2}\tag{E4.18.2} [t,x\otimes 1&+1\otimes x]
=\lambda_1(y\otimes x-x\otimes y),\\
\label{E4.18.3}\tag{E4.18.3} [u,y\otimes 1&+1\otimes y]
=\lambda_2(y\otimes x-x\otimes y),\\
\label{E4.18.4}\tag{E4.18.4} [t,y\otimes 1&+1\otimes y] =0.
\end{align}
\end{lemma}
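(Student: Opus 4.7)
The approach is direct computation, applying the tensor-commutator rule
$[a\otimes b,\, c\otimes 1+1\otimes c]=[a,c]\otimes b+a\otimes [b,c]$
to each of the four simple-tensor summands of $u$ and $t$, and then using the defining relations of $C=A(\lambda_1,\lambda_2,\alpha)$, namely $[x,y]=0$, $[z,x]=\lambda_1 x+\alpha y$, and $[z,y]=\lambda_2 y$.

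The plan is as follows. First I would observe that because $[x,y]=0$, the element $xy\in C$ commutes with both $x$ and $y$; moreover $[x,x]=[y,y]=0$. Therefore the ``$xy$-tails'' of $u$ and $t$, i.e.\ the two summands $xy\otimes x + x\otimes xy$ inside $u$ and $xy\otimes y + y\otimes xy$ inside $t$, contribute nothing when bracketed against $x\otimes 1+1\otimes x$ or $y\otimes 1+1\otimes y$. This reduces each of the four identities to computing a single bracket of the form $[z\otimes x-x\otimes z,\, *]$ or $[y\otimes z-z\otimes y,\, *]$.

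Next I would carry out the remaining four small computations. For \eqref{E4.18.1}, $[z\otimes x-x\otimes z,\, x\otimes 1+1\otimes x]$ expands to $[z,x]\otimes x - x\otimes [z,x]=(\lambda_1 x+\alpha y)\otimes x - x\otimes(\lambda_1 x+\alpha y)$; the $\lambda_1 x\otimes x$ terms cancel and what remains is exactly $\alpha(y\otimes x-x\otimes y)$. For \eqref{E4.18.3}, the same summand of $u$ paired with $y\otimes 1+1\otimes y$ yields $[z,y]\otimes x - x\otimes [z,y]=\lambda_2 y\otimes x - \lambda_2 x\otimes y$, giving $\lambda_2(y\otimes x-x\otimes y)$. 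For \eqref{E4.18.2}, the relevant piece of $t$ produces $y\otimes[z,x] - [z,x]\otimes y=y\otimes(\lambda_1 x+\alpha y)-(\lambda_1 x+\alpha y)\otimes y$; the $\alpha y\otimes y$ terms cancel and one reads off $\lambda_1(y\otimes x-x\otimes y)$. For \eqref{E4.18.4}, one gets $y\otimes[z,y]-[z,y]\otimes y=\lambda_2(y\otimes y-y\otimes y)=0$.

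There is essentially no obstacle here beyond sign bookkeeping; the real content is the two observations that $xy$ is central in the subalgebra $k\langle x,y\rangle$ and that after the one genuine bracket the symmetric part in $x\otimes x$ or $y\otimes y$ always cancels, isolating the antisymmetric combination $y\otimes x-x\otimes y$ with the correct scalar coefficient.
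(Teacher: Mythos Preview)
Your proof is correct and follows essentially the same approach as the paper: both compute the commutators directly, use $[x,y]=0$ to eliminate the contributions from the $xy$-tail terms (and from $[x,x]$, $[y,y]$), and then substitute the relations $[z,x]=\lambda_1 x+\alpha y$, $[z,y]=\lambda_2 y$ to read off the result. The only cosmetic difference is that the paper splits each computation into the pieces $[u,x\otimes 1]$ and $[u,1\otimes x]$ before adding, whereas you apply the combined rule $[a\otimes b,\,c\otimes 1+1\otimes c]=[a,c]\otimes b+a\otimes[b,c]$ at once.
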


\begin{proof} We use the relations of $A(\lambda_1,\lambda_2,\alpha)$
and note that $[x,y]=0$. By an easy computation, we have
$$\begin{aligned}
\;[u,x\otimes 1]&=[z,x]\otimes
x=(\lambda_1x+\alpha y)\otimes x,\\
[u,1\otimes x]&=-x\otimes [z,x]=-x\otimes (\lambda_1x+\alpha y).
\end{aligned}
$$
Now \eqref{E4.18.1} follows by adding the above.

By a computation,
$$\begin{aligned}
\;[t,x\otimes 1]&=-[z,x]\otimes
y=-(\lambda_1x+\alpha y)\otimes y,\\
[t,1\otimes x]&=y\otimes [z,x]=y\otimes (\lambda_1x+\alpha y),
\end{aligned}
$$
and \eqref{E4.18.2} is obtained by adding the above. The proof of the
last two are similar.

\end{proof}

\begin{lemma}
\label{zzlem4.19} Let $w$ be as in Theorem \ref{zzthm4.12}.
\begin{align} \label{E4.19.1}\tag{E4.19.1}
[w,x]&= -(\theta_1 \alpha +\theta_2 \lambda_1)z+a_{11}x+a_{12}y\\
\label{E4.19.2}\tag{E4.19.2} [w,y]&=-\theta_1\lambda_2
z+a_{21}x+a_{22}y
\end{align}
for some $a_{11},a_{12},a_{21},a_{22}\in k$.
\end{lemma}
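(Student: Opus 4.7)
The plan is to compute $\delta([w,x])$ and $\delta([w,y])$ directly, use Lemma \ref{zzlem4.18} to recognize the result as a scalar multiple of $\delta(z)$, and then conclude via primitivity.

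First I would apply $\delta$ to $[w,x]$. Since $\delta(h)=\Delta(h)-(h\otimes 1+1\otimes h)$ and $\Delta$ is an algebra map,
\[
\delta([w,x])=[\Delta(w),\Delta(x)]-[w,x]\otimes 1-1\otimes [w,x].
\]
Using $\Delta(x)=x\otimes 1+1\otimes x$ and $\Delta(w)=1\otimes w+\theta_1 u+\theta_2 t+w\otimes 1$, the ``outer'' terms $[1\otimes w+w\otimes 1,\,x\otimes 1+1\otimes x]$ contribute exactly $[w,x]\otimes 1+1\otimes [w,x]$ and cancel. What remains is
\[
\delta([w,x])=\theta_1[u,x\otimes 1+1\otimes x]+\theta_2[t,x\otimes 1+1\otimes x].
\]
By \eqref{E4.18.1} and \eqref{E4.18.2} this simplifies to $(\theta_1\alpha+\theta_2\lambda_1)(y\otimes x-x\otimes y)$.

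Next I observe that $\delta(z)=x\otimes y-y\otimes x$, so
\[
\delta\bigl([w,x]+(\theta_1\alpha+\theta_2\lambda_1)z\bigr)=0,
\]
i.e.\ $[w,x]+(\theta_1\alpha+\theta_2\lambda_1)z\in P(H)=kx\oplus ky$. Writing this primitive element as $a_{11}x+a_{12}y$ yields \eqref{E4.19.1}. The same computation applied to $[w,y]$, this time using \eqref{E4.18.3} and \eqref{E4.18.4}, gives
\[
\delta([w,y])=\theta_1\lambda_2(y\otimes x-x\otimes y)=-\theta_1\lambda_2\,\delta(z),
\]
so $[w,y]+\theta_1\lambda_2 z$ is primitive and equals $a_{21}x+a_{22}y$, proving \eqref{E4.19.2}.

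There is no real obstacle here: once Lemma \ref{zzlem4.18} is available the argument is a routine $\delta$-computation plus the fact that $P(H)=kx\oplus ky$ (which is part of the primitively-thin hypothesis). The only thing to be careful about is verifying that the ``$w\otimes 1$'' and ``$1\otimes w$'' parts of $\Delta(w)$ really do cancel with $[w,x]\otimes 1+1\otimes[w,x]$, which is immediate from the fact that $x\otimes 1$ commutes with $1\otimes w$ and $1\otimes x$ commutes with $w\otimes 1$ in $H\otimes H$.
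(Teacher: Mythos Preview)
Your proposal is correct and follows essentially the same approach as the paper: compute $\delta([w,x])$ via $[\Delta(w),\Delta(x)]$, cancel the $w\otimes 1+1\otimes w$ contribution against $[w,x]\otimes 1+1\otimes[w,x]$, invoke Lemma~\ref{zzlem4.18}, and use $P(H)=kx\oplus ky$. The paper in fact only writes out the $[w,x]$ case and says the second is similar, so your explicit treatment of $[w,y]$ is slightly more detailed but not different in substance.
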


\begin{proof} We only prove the first equation and the proof of the
second equation is similar.
$$\begin{aligned}
\delta([w,x])&=\Delta([w,x])-[w,x]\otimes 1-1\otimes [w,x]\\
&=[\Delta(w), x\otimes 1+1\otimes x]-[w,x]\otimes 1-1\otimes [w,x]\\
&=[w\otimes 1+1\otimes w+\theta_1 u+\theta_2 t,
x\otimes 1+1\otimes x]-[w,x]\otimes 1-1\otimes [w,x]\\
&=[\theta_1 u+\theta_2 t, x\otimes 1+1\otimes x]\\
&=\theta_1 [u,x\otimes 1+1\otimes x]+\theta_2 [t,x\otimes 1+1\otimes x]\\
&=(\theta_1 \alpha +\theta_2 \lambda_1)(y\otimes x-x\otimes y)
\qquad\qquad \qquad {\text{by \eqref{E4.18.1}-\eqref{E4.18.2}}}\\
&=-(\theta_1 \alpha +\theta_2 \lambda_1)\delta(z).
\end{aligned}
$$
Therefore $[w,x]+(\theta_1 \alpha +\theta_2 \lambda_1)z$ is a
primitive elements, whence it is of the form $a_{11}x+a_{12}y$ for
some $a_{11},a_{12}\in k$. The assertion follows.
\end{proof}

\begin{lemma}
\label{zzlem4.20} Retain the notation as above. Then $\lambda_2=0$.
Consequently, $(\lambda_1,\lambda_2,\alpha)$ is either $(0,0,0)$, or
$(0,0,1)$ or  $(1,0,0)$.
\end{lemma}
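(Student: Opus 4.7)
The plan is to derive the constraint $\lambda_2 = 0$ from a Jacobi identity applied to $w$, $x$, $y$. Since $[x,y]=0$ in every $A(\lambda_1,\lambda_2,\alpha)$, the Jacobi identity in $H$ gives the single relation
\[
[[w,x],\,y] \;=\; [[w,y],\,x].
\]
I would substitute the formulas for $[w,x]$ and $[w,y]$ furnished by Lemma \ref{zzlem4.19} into both sides and expand using the defining relations of $A(\lambda_1,\lambda_2,\alpha)$ (namely $[z,x]=\lambda_1 x+\alpha y$, $[z,y]=\lambda_2 y$, and $[x,y]=0$). The left-hand side collapses to $-(\theta_1\alpha+\theta_2\lambda_1)\lambda_2\,y$ (the $a_{11}x$ and $a_{12}y$ terms die against $y$), while the right-hand side becomes $-\theta_1\lambda_2(\lambda_1 x+\alpha y)$. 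The $\alpha y$ contributions cancel, leaving the identity
\[
\theta_1\lambda_1\lambda_2\,x \;-\; \theta_2\lambda_1\lambda_2\,y \;=\; 0
\]
in $H$.

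Since $x$ and $y$ are linearly independent, comparing coefficients yields $\theta_1\lambda_1\lambda_2=0$ and $\theta_2\lambda_1\lambda_2=0$. By Theorem \ref{zzthm4.12} at least one of $\theta_1,\theta_2$ is nonzero, so $\lambda_1\lambda_2=0$. Finally I would consult the classification of Proposition \ref{zzpro4.3}(a): the admissible triples are $(0,0,0)$, $(0,0,1)$, $(1,1,1)$, and $(1,\lambda,0)$. The condition $\lambda_1\lambda_2=0$ kills $(1,1,1)$ outright and forces $\lambda=0$ in the family $(1,\lambda,0)$, leaving precisely $(0,0,0)$, $(0,0,1)$, $(1,0,0)$. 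In every surviving case $\lambda_2=0$, as claimed.

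There is no real obstacle here; the only point that requires attention is making sure to use the equation $[x,y]=0$ (which holds in \emph{all} four $A$-families, but fails in the $B(\lambda)$ family already ruled out by Proposition \ref{zzpro4.17}) so that the Jacobi identity reduces to the clean form above. The argument is essentially a short computation once Lemma \ref{zzlem4.19} is in hand, together with a finite case-check against the list in Proposition \ref{zzpro4.3}(a).
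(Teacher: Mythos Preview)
Your proof is correct and follows essentially the same approach as the paper's: both apply the Jacobi identity to $w,x,y$ (using $[x,y]=0$), substitute the formulas from Lemma \ref{zzlem4.19}, and arrive at $\theta_1\lambda_1\lambda_2\,x-\theta_2\lambda_1\lambda_2\,y=0$, whence $\lambda_1\lambda_2=0$ and a check against Proposition \ref{zzpro4.3}(a) finishes. Your case-by-case elimination at the end is slightly more explicit than the paper's one-line conclusion, but the argument is the same.
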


\begin{proof} Since $[x,y]=0$, using Lemma \ref{zzlem4.19} we have
$$\begin{aligned}
0&=[w,[x,y]]=[[w,x],y]+[x,[w,y]]\\
&= [-(\theta_1 \alpha +\theta_2 \lambda_1)z+a_{11}x+a_{12}y,y]+ [x,
-\theta_1\lambda_2 z+a_{21}x+a_{22}y]\\
&=-(\theta_1 \alpha +\theta_2
\lambda_1)[z,y]+\theta_1\lambda_2[z,x]\\
&=-(\theta_1 \alpha +\theta_2 \lambda_1)(\lambda_2
y)+\theta_1\lambda_2(\lambda_1 x+\alpha y)\\
&=-\theta_2\lambda_1\lambda_2 y+\theta_1\lambda_1\lambda_2 x.
\end{aligned}
$$
Since one of $\theta_i$'s is nonzero, $\lambda_1\lambda_2=0$. We
only consider those $(\lambda_1,\lambda_2,\alpha)$'s given in
Proposition \ref{zzpro4.3}(a), therefore, $\lambda_2=0$.
\end{proof}

\begin{lemma}
\label{zzlem4.21} Suppose $\lambda_2=0$. Let $u$ and $t$ be given as
in \eqref{E4.9.1}-\eqref{E4.9.2}.
\begin{align}
\label{E4.21.1}\tag{E4.21.1} [u,z\otimes 1&+1\otimes z]
=-\lambda_1 u+\alpha t-\alpha \delta(xy^2)-\lambda_1(xy\otimes x+x\otimes xy)),\\
\label{E4.21.2}\tag{E4.21.2} [t,z\otimes 1&+1\otimes z]
=-\lambda_1(xy\otimes y+y\otimes xy)-\alpha(y^2\otimes y+y\otimes
y^2),\\
\label{E4.21.3}\tag{E4.21.3} [u, x\otimes y&-y\otimes
x]=\lambda_1(x\otimes xy+xy\otimes x)+\alpha(y\otimes xy+xy\otimes
y),\\
\label{E4.21.4}\tag{E4.21.4} [t, x\otimes y&-y\otimes
x]=-\lambda_1(x\otimes y^2+y^2\otimes x)-\alpha(y\otimes
y^2+y^2\otimes y).
\end{align}
\end{lemma}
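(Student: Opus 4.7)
The four identities are routine but lengthy computations in the associative algebra $H^{\otimes 2}$, carried out using only the relations $[x,y]=0$, $[z,x]=\lambda_1 x+\alpha y$, $[z,y]=\lambda_2 y=0$ of $A(\lambda_1,0,\alpha)$ and the definitions \eqref{E4.9.1}--\eqref{E4.9.2} of $u$ and $t$. The plan is purely mechanical.

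First I would record the elementary identity that drives everything: for any $a,b,c\in H$, regarding $H^{\otimes 2}$ as an associative algebra via $(a\otimes b)(c\otimes d)=ac\otimes bd$, one has
\begin{equation*}
[a\otimes b,\,c\otimes 1]=[a,c]\otimes b,\qquad [a\otimes b,\,1\otimes c]=a\otimes [b,c].
\end{equation*}
Applying this termwise to $u=z\otimes x-x\otimes z+xy\otimes x+x\otimes xy$ and $t=y\otimes z-z\otimes y+xy\otimes y+y\otimes xy$, and using $[z,z]=0$, $[x,z]=-(\lambda_1 x+\alpha y)$, $[y,z]=0$, $[xy,z]=[x,z]y+x[y,z]=-\lambda_1 xy-\alpha y^2$, reduces each bracket to an explicit linear combination of the monomials $z\otimes x,\ x\otimes z,\ y\otimes z,\ z\otimes y,\ xy\otimes x,\ x\otimes xy,\ xy\otimes y,\ y\otimes xy,\ y^2\otimes x,\ x\otimes y^2,\ y^2\otimes y,\ y\otimes y^2$. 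For example, one finds that the $\lambda_1$-terms in $[u,z\otimes 1+1\otimes z]$ assemble into $-(z\otimes x-x\otimes z)-2(xy\otimes x+x\otimes xy)=-u-(xy\otimes x+x\otimes xy)$, while the $\alpha$-terms give $(y\otimes z-z\otimes y)-(y^2\otimes x+x\otimes y^2)-(xy\otimes y+y\otimes xy)$, which by \eqref{E4.16.1} rewrites as $t-\delta(xy^2)$; this yields \eqref{E4.21.1}. The identities \eqref{E4.21.2}--\eqref{E4.21.4} are analogous, with \eqref{E4.16.1} and \eqref{E4.16.3} used to repackage the $y^2\otimes x+x\otimes y^2$ and $y^2\otimes y+y\otimes y^2$ pieces in the right-hand sides.

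The last two identities are easier, since $x\otimes y-y\otimes x$ is linear (no $z$ appears), so the calculation of $[u,x\otimes y-y\otimes x]$ and $[t,x\otimes y-y\otimes x]$ involves only commutators of $z$ and $xy$ against $x$ and $y$, and no $\delta$-formulas are needed to finish. The main ``obstacle'' is purely bookkeeping: one must carefully keep track of the signs and tensor orders when splitting each bracket into the four pieces $[\cdot\otimes\cdot,\ \cdot\otimes 1]$ and $[\cdot\otimes\cdot,\ 1\otimes\cdot]$, and then identify the correct subsums as $u$, $t$, $\delta(xy^2)$, or $\delta(y^3)$. No clever idea is required once the driving identity above is in place.
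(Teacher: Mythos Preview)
Your proposal is correct and follows essentially the same route as the paper: both proofs compute each bracket by splitting into the pieces $[\,\cdot\,,z\otimes 1]$ and $[\,\cdot\,,1\otimes z]$ (respectively $[\,\cdot\,,x\otimes y]$ and $[\,\cdot\,,-y\otimes x]$), apply the relations of $A(\lambda_1,0,\alpha)$ termwise, and then use \eqref{E4.16.1} to rewrite the $\alpha$-part of \eqref{E4.21.1} as $t-\delta(xy^2)$. One small remark: you will not actually need \eqref{E4.16.3} for \eqref{E4.21.2} or \eqref{E4.21.4}, since their right-hand sides are already stated in the raw monomial form $y^2\otimes y+y\otimes y^2$ rather than as $\tfrac{1}{3}\delta(y^3)$.
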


\begin{proof} By direct computation, we have
$$\begin{aligned}
\; [u,z\otimes 1]&=(\lambda_1 x+\alpha y) \otimes z-(\lambda_1
x+\alpha y)y\otimes x-(\lambda_1 x
+\alpha y)\otimes xy,\\
[u,1\otimes z]&=-z\otimes(\lambda_1 x+\alpha y)-xy\otimes (\lambda_1
x+\alpha y)-x\otimes(\lambda_1 x+\alpha y)y.
\end{aligned}
$$
Adding up and using definition and \eqref{E4.16.1}, we obtain
\eqref{E4.21.1}. Others are similar, by using definitions and direct
computations.
\end{proof}

\begin{lemma}
\label{zzlem4.22} Suppose $\lambda_2=0$. Then
\begin{equation}\label{E4.22.1}\tag{E4.22.1}
\theta_1(\theta_2 \lambda_1+\theta_1 \alpha)=0.
\end{equation}
Further,
\begin{enumerate}
\item
If $(\lambda_1,\lambda_2,\alpha)=(0,0,0)$, then
$$[w,z]=(a_{11}+a_{22})z+\xi_1 x+\xi_2 y$$
for some $\xi_i\in k$.
\item
If $(\lambda_1,\lambda_2,\alpha)=(1,0,0)$, then
$\theta_1\theta_2=0$. If, moreover, $\theta_1=0$,  then
$$[w,z]=(a_{11}+a_{22})z+w+(-\theta_2)xy^2+\xi_1 x+\xi_2 y$$
for some $\xi_i\in k$. If, moreover, $\theta_2=0$,  and
$$[w,z]=(a_{11}+a_{22})z-w+\xi_1 x+\xi_2 y$$
for some $\xi_i\in k$.
\item
If $(\lambda_1,\lambda_2,\alpha)=(0,0,1)$, then $\theta_1=0$ and
$$[w,z]=(a_{11}+a_{22})z-\frac{2}{3}\theta_2 y^3+\xi_1 x+\xi_2 y$$
for some $\xi_i\in k$.
\end{enumerate}
\end{lemma}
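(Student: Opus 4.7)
The plan is to compute $\delta([w,z])$ explicitly from
$$\delta([w,z]) = [\Delta(w), \Delta(z)] - 1\otimes[w,z] - [w,z]\otimes 1,$$
substituting $\Delta(w) = 1\otimes w + w\otimes 1 + \theta_1 u + \theta_2 t$ and $\Delta(z) = 1\otimes z + z\otimes 1 + (x\otimes y - y\otimes x)$. The terms $1\otimes[w,z] + [w,z]\otimes 1$ cancel against the bracket of the $1$-containing pieces, leaving a sum of five cross-brackets. Each of these I would evaluate using Lemmas \ref{zzlem4.19} and \ref{zzlem4.21}, and the resulting expression lies in $C^+\otimes C^+$.

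The next step is to write $[w,z] = cw + r$ with $r\in C$. This is possible because, by Proposition \ref{zzpro4.15}, $[w,z]$ expands in the PBW basis, and a coradical-degree check rules out monomials containing $w^2$ or $xw,yw,zw$, etc. Then $\delta([w,z]) = c(\theta_1 u + \theta_2 t) + \delta(r)$, so the cohomology class of $\delta([w,z])$ in $\h^2(\Omega C)$ equals $c\theta_1(u) + c\theta_2(t)$. Computing the $(u)$- and $(t)$-components of the direct expression---from $\theta_1[u,z\otimes 1 + 1\otimes z] = -\theta_1\lambda_1 u + \theta_1\alpha t + \cdots$ and from the skew $(y\otimes z - z\otimes y)$-piece $(\theta_1\alpha + \theta_2\lambda_1)$ of $[1\otimes w + w\otimes 1, x\otimes y - y\otimes x]$---shows the total class equals $-\theta_1\lambda_1(u) + (2\theta_1\alpha + \theta_2\lambda_1)(t)$. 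Equating gives $c\theta_1 = -\theta_1\lambda_1$ and $c\theta_2 = 2\theta_1\alpha + \theta_2\lambda_1$, and eliminating $c$ yields the identity $\theta_1(\theta_2\lambda_1 + \theta_1\alpha) = 0$ of \eqref{E4.22.1}.

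To finish parts (a)--(c) I specialize and simplify. In (a), $\lambda_1=\alpha=0$, so all $u,t$-cross-brackets vanish and $\delta([w,z]) = (a_{11}+a_{22})\delta(z)$, giving $[w,z] - (a_{11}+a_{22})z \in P(H)=kx+ky$. In (b), $(\lambda_1,\alpha)=(1,0)$ and $\theta_1\theta_2=0$: when $\theta_2=0$, the surviving terms combine to $-u + (a_{11}+a_{22})\delta(z) = \delta(-w+(a_{11}+a_{22})z)$; when $\theta_1=0$, I use $\delta(xy^2)=x\otimes y^2 + y^2\otimes x + 2(xy\otimes y + y\otimes xy)$ from Lemma \ref{zzlem4.16} to rewrite the residue as $\delta(w + (a_{11}+a_{22})z - xy^2)$. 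In (c), $(\lambda_1,\alpha)=(0,1)$ forces $\theta_1=0$, and $\delta(y^3)=3(y\otimes y^2 + y^2\otimes y)$ lets me identify $\delta([w,z]) = \delta((a_{11}+a_{22})z - \tfrac{2}{3}y^3)$. In each case, adding a primitive correction $\xi_1 x + \xi_2 y$ yields the stated formula.

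The main obstacle is the bookkeeping: tracking the numerous cross-terms and recognizing $\delta(xy^2)$ and $\delta(y^3)$ hidden inside the residue is what makes it possible to peel off the $w$-contribution cleanly and express the remainder as a coboundary in $\Omega C$ plus a primitive.
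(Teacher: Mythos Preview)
Your proposal is correct and follows essentially the same approach as the paper: expand $\delta([w,z])$ via the commutator of $\Delta(w)$ and $\Delta(z)$, evaluate the cross-terms using Lemmas~\ref{zzlem4.19} and~\ref{zzlem4.21}, extract the $(u),(t)$-coefficients, and then specialize. The one difference is in how you justify writing $[w,z]=cw+r$ with $r\in C$: you argue via the PBW basis and a coradical-degree count, whereas the paper invokes Lemma~\ref{zzlem4.14}(c), which says directly that $H_3^+/C_3^+$ is one-dimensional spanned by the image of $w$, so the class of $\delta([w,z])$ in $\h^2(\Omega C)$ must be a scalar multiple of $\theta_1(u)+\theta_2(t)$; the proportionality condition is then the $2\times 2$ determinant $\theta_2(-\theta_1\lambda_1)-\theta_1(2\theta_1\alpha+\theta_2\lambda_1)=0$, equivalent to your elimination of $c$. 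One small slip: in case~(b) with $\theta_1=0$ you wrote the residue as $\delta(w+(a_{11}+a_{22})z-xy^2)$, but the $xy^2$ term should carry the coefficient $\theta_2$, matching the $-\theta_2\,xy^2$ in the statement.
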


\begin{proof}
$$\begin{aligned}
\delta([w,z])&=\Delta([w,z])-[w,z]\otimes 1-1\otimes [w,z]\\
&=[\Delta(w),\Delta(z)]-[w,z]\otimes 1-1\otimes [w,z]\\
&=[w\otimes 1+1\otimes w+\theta_1 u+\theta_2 t, z\otimes 1+1\otimes
z+(x\otimes y-y\otimes x)]\\
&\quad -[w,z]\otimes 1-1\otimes [w,z],\\
&=[w\otimes 1+1\otimes w, (x\otimes y-y\otimes x)]\\
&\quad +[\theta_1 u+\theta_2 t, z\otimes 1+1\otimes
z+(x\otimes y-y\otimes x)]\\
&=-(\theta_1\alpha+\theta_2\lambda_1)(z\otimes y-y\otimes
z)+(a_{11}+a_{22})(x\otimes y-y\otimes x)\\
&\qquad\qquad \qquad \qquad\qquad\qquad \qquad \qquad\qquad \qquad
{\text{by \eqref{E4.19.1}-\eqref{E4.19.2}}}\\
&\quad +\theta_1(-\lambda_1 u+\alpha t-\alpha
\delta(xy^2)+\alpha(y\otimes xy+xy\otimes y))\\
&\qquad\qquad \qquad \qquad\qquad\qquad \qquad \qquad\qquad \qquad
{\text{by \eqref{E4.21.1}-\eqref{E4.21.3}}}\\
&\quad +\theta_2(-\lambda_1 \delta(xy^2)+\lambda_1(xy\otimes
y+y\otimes xy)-2\alpha (y^2\otimes y+y\otimes y^2)) \\
&\qquad\qquad \qquad \qquad\qquad\qquad \qquad \qquad\qquad \qquad
{\text{by \eqref{E4.21.2}-\eqref{E4.21.4}}}\\
&=-\theta_1\lambda_1u+(2\theta_1\alpha+\theta_2\lambda_1)
t\\
&\quad +(a_{11}+a_{22})\delta(z)+(-\theta_1\alpha-\theta_2\lambda_1)
\delta(xy^2)-\frac{2}{3}\theta_2\alpha\delta(y^3).
\end{aligned}
$$
Since $\delta([w,z])\in C\otimes C$, $[w,z]$ induces a cohomology
class in ${\text{H}}^2(\Omega C)$. By Lemma \ref{zzlem4.14}(c),
$\delta([w,z])$ is a scalar multiple of $\delta(w)$. This implies
that
$$\theta_2(-\theta_1\lambda_1)-\theta_1(2\theta_1\alpha+
\theta_2\lambda_1)=0$$ or, after simplifying, we obtain
\eqref{E4.22.1}.

(a) By the above computation, we have
$\delta([w,z])=(a_{11}+a_{22})\delta(z)$. The assertion follows.

(b) When $(\lambda_1,\lambda_2,\alpha)=(1,0,0)$, \eqref{E4.22.1}
becomes $\theta_1\theta_2=0$ and
$$\delta([w,z])=-\theta_1u+\theta_2t+(a_{11}+a_{22})\delta(z)-\theta_2
\delta(xy^2).$$ Since $\theta_1\theta_2=0$, $-\theta_1u+\theta_2t$
is either $\delta(w)$ or $-\delta(w)$. Hence we have
$$[w,z]=(a_{11}+a_{22})z+c w+(-\theta_2)xy^2+\xi_1 x+\xi_2 y$$
for some $\xi_i\in k$ and $c=\pm 1$, which gives two cases listed in
part (b).

(c) If $(\lambda_1,\lambda_2,\alpha)=(0,0,1)$, then \eqref{E4.22.1}
implies that $\theta_1=0$ and
$$\delta([w,z])=(a_{11}+a_{22})\delta(z)-\frac{2}{3}\theta_2\delta(y^3)$$
and therefore the assertion follows.
\end{proof}

\begin{theorem}\label{zzthm4.23}
Let $H$ be a connected Hopf algebra of GK-dimension $4$ with $\dim_k
P(H)=2$. Then $H$ must be isomorphic to one of the Hopf algebras
listed in Example \ref{zzex4.4}, \ref{zzex4.5}, \ref{zzex4.6} and
\ref{zzex4.7}.
\end{theorem}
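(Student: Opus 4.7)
The plan is to combine Theorem \ref{zzthm4.12} with the reductions in Proposition \ref{zzpro4.17}, Lemma \ref{zzlem4.20} and Lemma \ref{zzlem4.22} to obtain a finite list of possible presentations of $H$, and then to match each presentation with one of Examples \ref{zzex4.4}--\ref{zzex4.7}.

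First, Theorem \ref{zzthm4.12} furnishes generators $\{x,y,z,w\}$ of $H$ with $x,y$ a basis of $P(H)$, $\Delta(z)=1\otimes z+x\otimes y-y\otimes x+z\otimes 1$, and $\Delta(w)=1\otimes w+\theta_1 u+\theta_2 t+w\otimes 1$ where at least one $\theta_i\neq 0$. The Hopf subalgebra $C:=k\langle x,y,z\rangle$ is primitively-thin of GK-dimension $3$. By Proposition \ref{zzpro4.17}, $C$ is not of type $B(\lambda)$, so $C\cong A(\lambda_1,\lambda_2,\alpha)$ for some triple from Proposition \ref{zzpro4.3}(a); then Lemma \ref{zzlem4.20} forces $\lambda_2=0$, leaving exactly three cases: $(0,0,0)$, $(1,0,0)$, and $(0,0,1)$.

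The heart of the argument is a case analysis using Lemma \ref{zzlem4.19} and Lemma \ref{zzlem4.22} to pin down the brackets $[w,x],[w,y],[w,z]$ in each of the three cases up to parameters $a_{ij},\xi_i$ and to the constraints imposed on $\theta_1,\theta_2$. In the $(0,0,0)$ case, the resulting relations together with the coproducts of $x,y,z,w$ are exactly those defining some $D(\{\theta_i\},\{a_{ij}\},\{\xi_i\})$ of Example \ref{zzex4.4}. In the $(0,0,1)$ case Lemma \ref{zzlem4.22}(c) forces $\theta_1=0$, and the relations match some $F(\beta,\gamma,\xi)$ of Example \ref{zzex4.6} after the reduction $\xi'=0$ indicated there. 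In the $(1,0,0)$ case Lemma \ref{zzlem4.22}(b) gives $\theta_1\theta_2=0$, which splits into two subcases: the subcase $\theta_2=0$ (so $\theta_1\neq 0$) matches some $E(a,b,\xi)$ of Example \ref{zzex4.5}, and the subcase $\theta_1=0$ (so $\theta_2\neq 0$) matches $K$ of Example \ref{zzex4.7}, after the base changes explicitly permitted in those examples (scaling $\theta_i$, swapping $x\leftrightarrow y$, and replacing $w$ by $w+p(x,y,z)$) to bring the parameters into normalized form.

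In each case the analysis produces a surjective Hopf algebra map from the corresponding model algebra onto $H$. By Proposition \ref{zzpro4.8}(b,c) the model algebras are all domains of GK-dimension $4$, and by Proposition \ref{zzpro4.15} $H$ has the corresponding PBW basis and hence also GK-dimension $4$; thus \cite[Lemma 7.4]{Zh2} upgrades the surjection to an isomorphism. I expect the main obstacle to lie in the $(1,0,0)$, $\theta_1=0$ subcase matching Example \ref{zzex4.7}: since that model is rigid (no free parameters), all the freedom in $\theta_2$ and in $a_{ij},\xi_i$ must be explicitly absorbed by constructing the base change $w\mapsto \lambda w+p(x,y,z)$, $(x,y)\mapsto (x,y)M$ preserving both the shape of $\Delta(w)$ and the defining relations of $C$. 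A parallel but less delicate absorption is needed for the $(0,0,1)$ and $(1,0,0),\theta_2=0$ subcases to show why the families $F$ and $E$ carry only the parameters listed there.
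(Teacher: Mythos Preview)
Your overall architecture matches the paper's proof: set up the generators via Theorem \ref{zzthm4.12}, rule out $B(\lambda)$ via Proposition \ref{zzpro4.17}, reduce to $C\cong A(0,0,0)$, $A(1,0,0)$, or $A(0,0,1)$ via Lemma \ref{zzlem4.20}, and then use Lemmas \ref{zzlem4.19} and \ref{zzlem4.22} to write $[w,x],[w,y],[w,z]$ with parameters $a_{ij},\xi_i$. In the $(0,0,0)$ case the match with Example \ref{zzex4.4} is indeed immediate.

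There is, however, a genuine gap in your plan for the other three subcases. You expect the surplus parameters to be absorbed by base changes of the form $w\mapsto \lambda w+p(x,y,z)$ and $(x,y)\mapsto (x,y)M$. That is not what happens, and in fact such changes alone cannot do the job: the linear substitutions on $(x,y)$ that preserve the relations of $A(1,0,0)$ or $A(0,0,1)$ are too restricted to kill the unwanted $a_{ij}$ simultaneously. The mechanism the paper uses is the Jacobi identity in $H$: one applies $[w,-]$ to the defining relations of $C$ (namely $[z,y]=0$ and $[z,x]=x$ or $y$) and reads off equalities among the $a_{ij}$. Concretely, in the $(1,0,0)$, $\theta_1=0$ subcase this forces $a_{21}=a_{22}=a_{11}=0$; in the $(1,0,0)$, $\theta_2=0$ subcase it forces $a_{22}=a_{12}=0$; in the $(0,0,1)$ subcase it forces $a_{21}=a_{11}=0$. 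Only the remaining parameters are then removed by the simple substitutions $z\mapsto z+\text{(linear)}$ and $w\mapsto w+\text{(linear)}$ that you describe. Without this Jacobi step you cannot exhibit a Hopf map from the rigid model $K$ of Example \ref{zzex4.7} to $H$ at all, nor can you fit $H$ into the parameter families $E$ or $F$, because the bracket shapes do not yet agree.

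Once the relations of $H$ are shown to coincide with those of one of the models, your final isomorphism argument (surjection between domains of equal GK-dimension) is fine and is implicitly what the paper means by ``this is the Hopf algebra described in Example \ldots''.
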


\begin{proof}
Retain the notation in Theorem \ref{zzthm4.12}. By Proposition
\ref{zzpro4.17} and Proposition \ref{zzpro4.3}, $C$ must be
isomorphic to $A(\lambda_1, \lambda_2,\alpha)$, where the possible
choices of $(\lambda_1, \lambda_2,\alpha)$ are listed in Proposition
\ref{zzpro4.3}(a). By Lemma \ref{zzlem4.20}, $\lambda_2=0$. Hence
$C$ is either $A(0,0,0)$, or $A(1,0,0)$, or $A(0,0,1)$.

Case 1: $C=A(0,0,0)$. By Lemma \ref{zzlem4.19} and
\ref{zzlem4.22}(a), we have
$$\begin{aligned}
\; [w,x]&= a_{11}x+a_{12}y\\
[w,y]&=a_{21}x+a_{22}y\\
[w,z]&=(a_{11}+a_{22})z+\xi_1 x+\xi_2 y
\end{aligned}
$$
for some $a_{11},a_{12},a_{21},a_{22},\xi_1,\xi_2\in k$. Together
with the coalgebra given in Theorem \ref{zzthm4.12}, this is the
Hopf algebra described in Example \ref{zzex4.4}.

Case 2: $C=A(1,0,0)$. By Lemma \ref{zzlem4.22}(b), there are two
cases to consider. If $\theta_1=0$, by dividing $w$ with $\theta_2$
we can assume that $\theta_2=1$. In this setting, by Lemmas
\ref{zzlem4.19} and \ref{zzlem4.22}(b), we have
$$\begin{aligned}
\; [w,x]&=-z+a_{11}x+a_{12}y\\
[w,y]&=a_{21}x+a_{22}y\\
[w,z]&=(a_{11}+a_{22})z+w-xy^2+\xi_1 x+\xi_2 y
\end{aligned}
$$
for some $a_{11},a_{12},a_{21},a_{22},\xi_1,\xi_2\in k$. Applying
$[w,-]$ to $[z,y]=0$, one sees that
$$\begin{aligned}
0&=[w,[z,y]]=[[w,z],y]+[z,[w,y]]\\
&=[(a_{11}+a_{22})z+w-xy^2+\xi_1 x+\xi_2 y,y]+[z,a_{21}x+a_{22}y]\\
&=[w,y]+a_{21}[z,x]=a_{21}x+a_{22}y+a_{21}x.
\end{aligned}
$$
Thus $a_{21}=a_{22}=0$. Applying $[w,-]$ to $[z,x]=x$ and using
$a_{21}=a_{22}=0$, one sees that
$$\begin{aligned}
\; [w,x]&=[w,[z,x]]\\
&=[[w,z],x]+[z,[w,x]]\\
&=[a_{11}z+w-xy^2+\xi_1 x+\xi_2 y,x]+[z,-z+a_{11}x+a_{12}y]\\
&=a_{11}[z,x]+[w,x]+a_{11}[z,x]=[w,x]+2a_{11}x
\end{aligned}
$$
which implies that $a_{11}=0$. By setting $z_{new}=z-a_{12}y$, we
can make $a_{12}=0$. By setting $w_{new}=w+\frac{1}{2} \xi_1 x+\xi_2
y$, we can make $\xi_1=\xi_2=0$. New variables $z$ and $w$ still
satisfy \eqref{E4.12.1} and \eqref{E4.12.2}, respectively (for
$\theta_1=0$). Therefore this is the Hopf algebra described in
Example \ref{zzex4.7}.

If $\theta_2=0$, by dividing $w$ with $\theta_1$ we can assume that
$\theta_1=1$. In this setting, by Lemmas \ref{zzlem4.19} and
\ref{zzlem4.22}(b), we have
$$\begin{aligned}
\; [w,x]&=a_{11}x+a_{12}y\\
[w,y]&=a_{21}x+a_{22}y\\
[w,z]&=(a_{11}+a_{22})z-w+\xi_1 x+\xi_2 y
\end{aligned}
$$
for some $a_{11},a_{12},a_{21},a_{22},\xi_1,\xi_2\in k$. Applying
$[w,-]$ to $[z,y]=0$, one sees that
$$\begin{aligned}
0&=[w,[z,y]]=[[w,z],y]+[z,[w,y]]\\
&=[(a_{11}+a_{22})z-w+\xi_1 x+\xi_2 y,y]+[z,a_{21}x+a_{22}y]\\
&=-[w,y]+a_{21}[z,x]=-a_{22}y.
\end{aligned}
$$
Thus $a_{22}=0$. Applying $[w,-]$ to $[z,x]=x$ and using $a_{22}=0$,
one sees that
$$\begin{aligned}
\; [w,x]&=[w,[z,x]]\\
&=[[w,z],x]+[z,[w,x]]\\
&=[a_{11}z-w+\xi_1 x+\xi_2 y,x]+[z,a_{11}x+a_{12}y]\\
&=a_{11}[z,x]-[w,x]+a_{11}[z,x]=-[w,x]+2a_{11}x
\end{aligned}
$$
which implies that $a_{12}=0$. This is the Hopf algebra described in
Example \ref{zzex4.5}.

Case 3: $C=A(0,0,1)$. By Lemma \ref{zzlem4.22}(c), $\theta_1=0$. By
dividing $w$ with $\theta_2$ we can assume that $\theta_2=1$. In
this setting, by Lemmas \ref{zzlem4.19} and \ref{zzlem4.22}(c), we
have
$$\begin{aligned}
\; [w,x]&=a_{11}x+a_{12}y\\
[w,y]&=a_{21}x+a_{22}y\\
[w,z]&=(a_{11}+a_{22})z-\frac{2}{3} y^3+\xi_1 x+\xi_2 y
\end{aligned}
$$
for some $a_{11},a_{12},a_{21},a_{22},\xi_1,\xi_2\in k$. Applying
$[w,-]$ to $[z,y]=0$, one sees that
$$\begin{aligned}
0&= [w,[z,y]]=[[w,z],y]+[z,[w,y]]\\
&=[(a_{11}+a_{22})z-\frac{2}{3} y^3+\xi_1 x+\xi_2
y,y]+[z,a_{21}x+a_{22}y]\\
&=a_{21}y.
\end{aligned}
$$
Hence $a_{21}=0$. Applying $[w,-]$ to $[z,x]=y$, one sees that
$$\begin{aligned}
a_{22}y&=[w,y]=[w,[z,x]]=[[w,z],x]+[z,[w,x]]\\
&=[(a_{11}+a_{22})z-\frac{2}{3} y^3+\xi_1 x+\xi_2
y,x]+[z,a_{11}x+a_{12}y]\\
&=(a_{11}+a_{22})y+a_{11}y.
\end{aligned}
$$
Hence $a_{11}=0$. This is the Hopf algebra described in Example
\ref{zzex4.6}.
\end{proof}

\subsection{Proof of the main result}
\label{zzsec4.6} With the help of the last few subsections, we are
able to deliver the main theorem of the paper.

\begin{theorem}[Theorem \ref{zzthm0.3}]
\label{zzthm4.24} Let $H$ be a connected Hopf algebra of
GK-dimension four over an algebraically closed field of
characteristic zero. Then $H$ is isomophic to one of following.
\begin{enumerate}
\item
Enveloping algebra $U({\mathfrak g})$ over a Lie algebra ${\mathfrak
g}$ of dimension $4$. Note that all 4-dimensional Lie algebras over
the complex numbers ${\mathbb C}$ are listed in the book
\cite[Theorem 1.1(iv), page 209]{OV}.
\item
Enveloping algebra $U(L)$ over an anti-cocommutative CLA $L$ of
dimension $4$. All anti-cocommutative coassociative Lie algebras of
dimension $4$ are classified in Theorem \ref{zzthm3.5}.
\item
Primitively-thin Hopf algebras of GK-dimension four. All
Primitively-thin Hopf algebras of GK-dimension four are classified
in Theorem \ref{zzthm4.23}.
\end{enumerate}
\end{theorem}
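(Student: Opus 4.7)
My plan is to partition the classification by the invariant $p(H) := \dim P(H)$, since the trichotomy of the lantern established in Lemma~\ref{zzlem1.4} is controlled exactly by this number. First I would argue that $p(H) \in \{2,3,4\}$: by Lemma~\ref{zzlem1.3}(b,c) we have $p(H) = \dim \mathfrak{L}(H)_1$, with $\mathfrak{L}(H)$ generated in degree $1$ and $\dim \mathfrak{L}(H) = \GKdim H = 4$; Lemma~\ref{zzlem1.3}(e) rules out $p(H) \le 1$. These three values are then handled by three independent tools already in hand.

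For $p(H) = 4$, the lantern is the abelian $4$-dimensional Lie algebra (Lemma~\ref{zzlem1.4}(a)). The Hopf subalgebra $U \subseteq H$ generated by $P(H)$ is cocommutative, hence equal to $U(\mathfrak{g})$ with $\mathfrak{g} = P(H)$, and satisfies $\GKdim U = \dim \mathfrak{g} = 4 = \GKdim H$. An appeal to \cite[Lemma 7.4]{Zh2} forces $H = U$, placing $H$ in part~(a); the reference to \cite[Theorem 1.1(iv)]{OV} then enumerates the possible $\mathfrak{g}$. (Equivalently, one may apply Theorem~\ref{zzthm2.7} with $\GKdim H = p(H)$.)

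For $p(H) = 3$, the hypothesis $\GKdim H \leq p(H)+1 < \infty$ of Theorem~\ref{zzthm2.7} is satisfied in the strict form $\GKdim H = p(H) + 1$, so $H \cong U(L)$ for an anti-cocommutative CLA $L = P_2(H)$ of dimension $4$. Theorem~\ref{zzthm3.5} then lists the isomorphism types of such $L$, giving part~(b). For $p(H) = 2$, $H$ is primitively-thin by Definition~\ref{zzdef1.2}(c), and Theorem~\ref{zzthm4.23} realises it as one of the four families of Examples \ref{zzex4.4}--\ref{zzex4.7}, giving part~(c).

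With all three specialised results in place, the present theorem is really an assembly step: verify that the three cases are exhaustive and mutually exclusive, and invoke the appropriate structure theorem in each. The genuine obstacle in the overall argument lies entirely inside Theorem~\ref{zzthm4.23}: the cobar-cohomology computation $\dim \HB^2(\Omega A(0,0,0)) = 2$ in Proposition~\ref{zzpro4.10}, the injectivity of $\delta: H_N^+/C_N^+ \to \HB^2(\Omega C)$ in Lemma~\ref{zzlem4.11}, and the commutator-by-commutator normalisation forced by Lemma~\ref{zzlem4.22} that simultaneously excludes $C = B(\lambda)$ (Proposition~\ref{zzpro4.17}) and trims $A(\lambda_1,\lambda_2,\alpha)$ down to $\lambda_2 = 0$ (Lemma~\ref{zzlem4.20}). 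At the level of Theorem~\ref{zzthm4.24} itself, no new computation is required.
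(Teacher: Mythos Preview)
Your proof is correct and follows essentially the same approach as the paper: partition by $p(H)\in\{2,3,4\}$, then invoke Theorem~\ref{zzthm2.7} for $p(H)\ge 3$ and Theorem~\ref{zzthm4.23} for $p(H)=2$. The only cosmetic difference is your bound $p(H)\le 4$ via the lantern (Lemma~\ref{zzlem1.3}(b,c)) versus the paper's use of the embedding $U(P(H))\hookrightarrow H$; both are fine, and your closing remarks on where the real work sits are accurate.
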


\begin{proof} By Lemma \ref{zzlem1.3}(e), $p(H)\ge 2$. Since
$U(\mathfrak{g})$ embeds in $H$ where $\mathfrak{g}=P(H)$, we
have $p(H)\le 4$.

If $p(H)=4$, by Theorem \ref{zzthm2.7}, $H\cong
U(\mathfrak{g})$ where $\mathfrak{g}=P(H)=P_2(H)$. This is case (a).

If $p(H)=3$, by Theorem \ref{zzthm2.7}, $H$ is isomorphic to the
enveloping algebra $U(L)$ over an anti-cocommutative CLA $L$ of
dimension $4$. Anti-cocommutative CLAs of dimension $4$ are
classified in Theorem \ref{zzthm3.5}.

If $p(H)=2$, by definition, $H$ is a primitively-thin Hopf algebras
of GK-dimension four, which are classified in Theorem
\ref{zzthm4.23}.
\end{proof}

\begin{corollary}
\label{zzthm4.25} Let $H$ be a connected Hopf algebra of dimension
at most 4. Then, as an algebra, $H$ is isomorphic to $U(\mathfrak
g)$ for some Lie algebra ${\mathfrak g}$.
\end{corollary}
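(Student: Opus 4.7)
The plan is to read the corollary off directly from the classification just established, interpreting ``dimension'' as GK-dimension. For $\GKdim H \leq 3$ the assertion is essentially already known to us: $\GKdim H = 0$ forces $H = k = U(0)$; Lemma~\ref{zzlem1.3}(d) handles $\GKdim H = 1$; and in GK-dimensions $2$ and $3$, Lemma~\ref{zzlem2.6}(c) together with Proposition~\ref{zzpro4.3} (whose algebras $A(\lambda_1,\lambda_2,\alpha)$ and $B(\lambda)$ are presented by manifestly Lie-algebraic relations) yield $H \cong U(\mathfrak g)$ as an algebra for a suitable $\mathfrak g$.

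For $\GKdim H = 4$ I would apply Theorem~\ref{zzthm4.24} and treat the three cases separately. Case (a) is immediate from the statement. In case (b), $H = U(L)$ for an anti-cocommutative CLA $L$; but by Definition~\ref{zzdef2.2} the algebra structure of $U(L)$ is literally $k\langle L\rangle / (ab - ba - [a,b])$, which is the ordinary enveloping algebra of the Lie algebra $(L,[\,,\,])$ obtained by forgetting $\delta$. Since $(L,[\,,\,])$ satisfies the Jacobi identity by definition of a CLA, one has $H \cong U(\mathfrak g)$ as algebras with $\mathfrak g = (L, [\,,\,])$.

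Case (c) is the only piece requiring any computation, and it has in fact been done inside the examples. By Theorem~\ref{zzthm4.23}, $H$ lies in one of the four families $D$, $E$, $F$, $K$ of Examples~\ref{zzex4.4}--\ref{zzex4.7}. In Examples~\ref{zzex4.4} and \ref{zzex4.5} it is remarked that the defining relations already present $D$ and $E$ as enveloping algebras of $4$-dimensional Lie algebras. For Examples~\ref{zzex4.6} and~\ref{zzex4.7} the substitutions $W' = W - \tfrac{2}{3}XY^2$ and $W' = W - \tfrac{1}{2}XY^2$ absorb the cubic corrections, after which the relations on $\{X,Y,Z,W'\}$ become Lie-algebraic and the subspace $kX \oplus kY \oplus kZ \oplus kW'$ is a $4$-dimensional Lie algebra with $F$, respectively $K$, as its enveloping algebra.

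There is no serious obstacle: the content of the corollary is already distributed across the classification of Section~\ref{zzsec4} and the tautology that a CLA is in particular a Lie algebra. The one subtlety worth flagging is that in case (b), although $U(L)$ is typically \emph{not} isomorphic as a Hopf algebra to any $U(\mathfrak g)$ (the coproduct on $L$ is genuinely non-trivial), the underlying algebras do coincide; this is precisely what allows a uniform algebra-level statement to hold even though the Hopf-level classification is strictly richer.
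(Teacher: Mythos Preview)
Your proof is correct and follows essentially the same approach as the paper's: reduce to the classification, observe that $U(L)$ for a CLA $L$ is already an ordinary enveloping algebra as an algebra (Definition~\ref{zzdef2.2}), and for the primitively-thin case defer to the explicit checks in Examples~\ref{zzex4.4}--\ref{zzex4.7}. The paper's proof is terser (organizing by the value of $p(H)$ rather than by the cases of Theorem~\ref{zzthm4.24}), but the content is the same.
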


\begin{proof} This is clear if $\GKdim H\leq 3$. For $\GKdim H=4$,
the assertion is clear for the case when $p(H)\geq 3$. The only case
left is when $p(H)=2$, in which the assertion was checked
case-by-case in Examples \ref{zzex4.4}-\ref{zzex4.7}.
\end{proof}

\providecommand{\bysame}{\leavevmode\hbox to3em{\hrulefill}\thinspace}
\providecommand{\MR}{\relax\ifhmode\unskip\space\fi MR }
\providecommand{\MRhref}[2]{%

\href{http://www.ams.org/mathscinet-getitem?mr=#1}{#2} }
\providecommand{\href}[2]{#2}


\begin{thebibliography}{10}





\bibitem[AA]{AA}
N. Andruskiewitsch and I.E. Angiono,
{\em On Nichols algebras with generic braiding},
pp. 47--64 in ``Modules and comodules'' (Porto, Portugal, 2006),
edited by T. Brzezi{\' n}ski et al., Birkh{\" a}user, Basel, 2008.

\bibitem[AS1]{AS1}
N. Andruskiewitsch and H.-J. Schneider,
{\em Pointed Hopf Algebras}, in: Recent developments in
Hopf algebra Theory, MSRI Publications {\bf 43} (2002), 168,
Cambridge Univ. Press.


\bibitem[AS2]{AS2} \bysame,
{\em A characterization of quantum groups},
J. reine angew. Math.
{\bf 577} (2004), 81--104.










\bibitem[Be]{Be}
G.M. Bergman,
\emph{The diamond lemma for ring theory},
Adv. in Math. \textbf{29} (2) (1978) 178--218.

\bibitem[Br1]{Br1}
K. A. Brown,
Noetherian Hopf algebras. Turkish J. Math. {\bf 31} (2007),
suppl., 7--23.

\bibitem[Br2]{Br2}
\bysame,
\emph{Representation theory of Noetherian Hopf algebras satisfying a
polynomial identity}, in Trends in the Representation Theory of
Finite Dimensional Algebras (Seattle 1997), (E.L. Green and B.
Huisgen-Zimmermann, eds.), Contemp.  Math. \textbf{229}
(1998), 49--79.

\bibitem[Br3]{Br3}
\bysame, \emph{Noncommutative unipotent groups in characteristic 0},
a lecture at the BIRS workshop on "New Trends in Noncommutative
Algebra and Algebraic Geometry" from Oct. 28 to Nov. 2, 2012, video
available at
http://www.birs.ca/events/2012/5-day-workshops/12w5049/videos/watch/201210301535-Brown.mp4




\bibitem[BG1]{BG1}
K. A. Brown and K. R. Goodearl,
Homological aspects of Noetherian PI Hopf algebras and
irreducible modules of maximal dimension,
J. Algebra {\bf 198} (1997), 240--265.

\bibitem[BG2]{BG2}
\bysame,
Lectures on Algebraic Quantum Groups, Birkh{\"a}user, 2002.

\bibitem[BZ]{BZ}
K.A. Brown and J.J. Zhang,
Prime regular Hopf algebras of GK-dimension one,
Proc. London Math. Soc. (3) {\bf 101} (2010) 260--302.







\bibitem[GZ1]{GZ1}
K.R. Goodearl and J.J. Zhang,
Homological properties of quantized coordinate rings of semisimple
groups, Proc. Lond. Math. Soc. (3) {\bf 94} (2007), no. 3, 647--671.

\bibitem[GZ2]{GZ2}
\bysame,
Noetherian Hopf algebra domains of Gelfand-Kirillov
dimension two, J. of Algebra, {\bf 324} (2010), Special Issue in
Honor of Susan Montgomery, 3131-3168.

\bibitem[Gr]{Gr}
M. Gromov,
Groups of polynomial growth and expanding maps.
Inst. Hautes {\'E}tudes Sci. Publ. Math. No. {\bf 53} (1981), 53--73.





\bibitem[Kh]{Kh}
V.K. Kharchenko,
A quantum analogue of the Poincar{\'e}-Birkhoff-Witt theorem.
(Russian. Russian summary) Algebra Log. {\bf 38} (1999), no. 4,
476--507, 509; translation in Algebra and Logic {\bf 38} (1999),
no. 4, 259--276.


\bibitem[KL]{KL}
G.R. Krause and T.H. Lenagan,
Growth of algebras and Gelfand-Kirillov dimension,
Revised edition. Graduate Studies in Mathematics, {\bf 22}.
AMS, Providence, RI, 2000.


\bibitem[Li]{Li}
G. Liu,
On Noetherian affine prime regular Hopf algebras of Gelfand-Kirillov
dimension 1,
Proc. Amer. Math. Soc. {\bf 137} (2009), no. 3, 777-785.

\bibitem[LWZ]{LWZ}
D.-M. Lu, Q.-S. Wu and J.J. Zhang,
Homological integral of Hopf algebras,
Trans. Amer. Math. Soc. {\bf 359} (2007), no. 10, 4945--4975.

\bibitem[LPWZ1]{LPWZ} D.-M. Lu, J. H. Palmieri, Q.-S. Wu, and J. J. Zhang, {\it A-infinity structure on Ext-algebras}, Journal of Pure and Applied Algebra 213 (2009), 2017-2037.


\bibitem[LPWZ2]{LPWZ2} D.-M. Lu, J. H. Palmieri, Q.-S. Wu and J. J. Zhang, {\it A-infinity Algebras for Ring Theorists}, Algebra Colloquium, 11(1), 2004, 91-128.



\bibitem[MR]{MR}
J. C. McConnell and J. C . Robson, Noncommutative Noetherian Rings,
Wiley, Chichester, 1987.




\bibitem[Mo]{Mo}
S. Montgomery, Hopf Algebras and their Actions on Rings, CBMS
Regional Conference Series in Mathematics, {\bf 82}, Providence, RI,
1993.

\bibitem[OV]{OV}
A.L. Onishchik and E.B. Vinberg (editors),
Lie Groups and Lie Algebras III, Encyclopedia of Math. Sci., {\bf 41},
(1994).


\bibitem[RZ]{RZ}
D. Rogalski and J. J. Zhang,
Regular algebras of dimension 4 with 3 generators,
preprint, 2011.



\bibitem[SSW]{SSW}
L. W. Small, J.T. Stafford and R. B. Warfield Jr.,
Affine algebras of Gelfand-Kirillov dimension one are PI,
Math. Proc. Cambridge Philos. Soc.
{\bf 97} (1985), no. 3, 407--414.









\bibitem[WZZ1]{WZZ1}
D.-G. Wang, J.J. Zhang and G.-B. Zhuang, Lower bounds of Growth of
Hopf algebras, Trans. Amer. Math. Soc. (to appear), arXiv:1101.1116

\bibitem[WZZ2]{WZZ2}
D.-G. Wang, J.J. Zhang and G.-B. Zhuang,
Hopf algebras of GK-dimension two with vanishing Ext-group,
preprint, 2011, arXiv:1105.0033.

\bibitem[WZZ3]{WZZ3}
D.-G. Wang, J.J. Zhang and G.-B. Zhuang,
Coassociative Lie algebras, preprint, 2012.

\bibitem[WZZ4]{WZZ4}
D.-G. Wang, J.J. Zhang and G.-B. Zhuang,
Primitive cohomology, in preparation.





\bibitem[We]{We} C.A. Weibel, An introduction to homological algebra, Cambridge University Press, 1994.
\bibitem[WuZ1]{WuZ1}
Q.-S. Wu and J.J. Zhang,
Noetherian PI Hopf algebras are Gorenstein,
Trans. Amer. Math. Soc.
{\bf 355} (2003), no. 3, 1043--1066.

\bibitem[WuZ2]{WuZ2}
\bysame,
Regularity of involutary PI Hopf algebras,
J. Algebra {\bf 256} (2002),no.2,599-610.



\bibitem[Zh1]{Zh1}
G. Zhuang,
Existence of Hopf subalgebras of GK-dimension two,
J. Pure Appl. Algebra {\bf 215} (2011), 2912-2922.

\bibitem[Zh2]{Zh2}
G. Zhuang,
Properties of pointed and connected Hopf algebras of
finite Gelfand-Kirillov dimension,
J. Lond. Math. Soc. (to appear).

\end{thebibliography}
\end{document}